\newcommand{\dsum}{\displaystyle\sum}
\pgfplotsset{compat=newest}
\newtheorem{lemma}{Lemma}
\newtheorem{example}{Example}
\let\origmaketitle\maketitle
\def\maketitle{
  \begingroup
  \def\uppercasenonmath##1{} 
  \let\MakeUppercase\relax 
  \origmaketitle
  \endgroup
}
\begin{document}
\title[Upgrading nodes in tree-shaped hub location]{\large Upgrading nodes in tree-shaped hub location}

\author[V\'ictor Blanco \MakeLowercase{and} Alfredo Mar\'in]{{\large V\'ictor Blanco$^\dagger$ and Alfredo Mar\'in$^\ddagger$}\medskip\\
$^\dagger$Dpt. Quant. Methods for Economics \& Business, Universidad de Granada\\
$^\ddagger$Dpt.\ Statistics \& Operational Research, Universidad de Murcia
}

\address{Dpt. Quant. Methods for Economics \& Business, Universidad de Granada}
\email{vblanco@ugr.es}

\address{Dpt.\ Statistics \& Operational Research, Universidad de Murcia}
\email{amarin@um.es}

\date{\today}

\begin{abstract}
In this paper, we introduce the Tree of Hubs Location Problem with Upgrading, a mixture of the Tree of Hubs Location Problem, presented in \cite{contreras}, and the Minimum Cost Spanning Tree Problem with Upgraded nodes, 
studied for the first time in \cite{krumke}. In addition to locate the hubs, to determine the tree that connects the hubs and to allocate non-hub nodes to hubs, a decision has to be made about which of the hubs will be upgraded, taking into account that the total number of upgraded nodes is given. We present two different Mixed Integer Linear Programming formulations for the problem, tighten the formulations and generate several families of valid
inequalities for them. A computational study is presented showing the improvements attained with the strengthening of the formulations and comparing them.
\end{abstract}
\keywords{Hub Location, Spanning Tree, Upgrading.}
\subjclass[2010]{90B80, 90C11, 05C05}

\maketitle

\section{Introduction}

The main goal of Discrete Facility Location is to decide which set of facilities, among a finite set of potential 
sites, must be opened in order to optimize a given objective function which represents traveling costs of a set of given customers and/or set-up costs. Reaching such a goal is closely related with  Mixed Integer Linear Programming (MILP) models. According to \cite{laporte},  the first MILP formulation for a discrete location problem, the \textit{Uncapacitated
Facility Location Problem}, was proposed by Balinski \cite{balinski}. In the following few years, also MILP formulations appeared for the discrete $p$-Median Problem~\cite{swain} and for a covering-location problem~\cite{toregas}. Since then, Discrete Facility Location has gained importance over the years as a fundamental part of both, Integer Programming and Location Science.

Among the wide family of Discrete Facility Location problems, hub location problems have taken on significance in the last decades. Hub Location Problems (HLP) emerge as an important design 
tool in transportation systems, where several sites must send and receive some goods through some transshipment points, {\em the hubs}. In the hubs, the product is collected and distributed, so reducing transportation costs. In this sense, a common characteristic of HLP is that the transportation cost  is reduced if the product is routed between two hubs. The interest of hub location is still increasing, and one can read a number of surveys on the matter published over the last years (see \cite{alumur,libro,ck,chlp,farahani}, among many others).

Hub location problems are usually classified according to the way of allocating non-hub customers (also called {\em spokes}) to hub nodes. On the one hand, when the product is allowed to be routed using all the available hubs, which can be different for one origin 
depending on the destination to be reached, the problem is said of {\em multiple--allocation}. The first formulation for this family of problems was presented in \cite{cejor94}. On the other hand, 
in {\em single--allocation} models, the customers are only allowed to send and receive the service from a unique 
hub, i.e., each origin/destination is allocated to exactly one hub. 
Our study fits within this category, to which the pioneering works \cite{aykin95}, \cite{ekejor98}, \cite{okbsksk}, \cite{okelly} and \cite{sksko} belong to.

A few papers also incorporate to the HLP the design of an inter-hub network in which not all hubs are necessarily neighbors (see \cite{campbelletal05a,campbelletal05b}). In this paper, we analyze the situation that occurs when the infrastructure costs (those costs associated 
with the direct links between hubs) are high compared to the rest of the costs in the model (e.g., transportation costs). In this case, a minimally nested and connected structure
is desirable: (i) a tree of hubs ({\em small tree}),
composed by the hubs and their links; and (ii) a tree of spokes and hubs ({\em large tree}) where the non-hub nodes 
and their single connections to the corresponding hub are linked to the small tree. This model, introduced by Contreras, Fern\'andez and Mar\'in  \cite{cfm2,contreras}, was named the \textit{Tree of Hubs Location Problem} (THLP). Since then, the THLP has received growing attention (see, e.g., \cite{alukar}, \cite{camikeca}, \cite{conisa}, \cite{ljubic}, \cite{miluca} and \cite{sedehzadeh}). Due to the high applicability and difficulty of solving the problem, both heuristic and exact efficient approaches have been developed for the problem. In particular, in \cite{pessoa}, the authors developed a genetic algorithm 
for the THLP, and in \cite{sa}, a Benders-based branch-and-cut strategy is designed to solve the THLP.

On the other hand, one can find in the literature extensions of classical combinatorial optimization problems in which some of the nodes are allowed to be upgraded. Upgrading implies a reduction of the cost of traversing edges connecting those special nodes (see for instance, \cite{alvarez,dilkina,paik}). Thus, the cost structure in these problems depends on the upgrading type of the extreme nodes of the edges (no upgraded extremes, only one upgraded extreme or two upgraded extremes), and then, it is part of the decision. In particular, in the recent publication by \'Alvarez-Miranda and Sinnl~\cite{alvarez}, the authors introduce the Minimum Spanning Tree (MST)  
problem with Upgrading (MSTU). In that problem, a minimum cost spanning tree has to be built on a graph, but the cost associated to the edges of the graph is reduced by upgrading nodes in two ways: (1) when one (but not both) of the extremes is upgraded; or (2) when both of the extremes are upgraded. In this way,  if a node is
upgraded, all edges of the small tree containing it benefit from the decision.  This extension 
transforms the polynomial-time solvable MST problem into an NP-hard problem~\cite{alvarez}. This problem is of interest in the telecommunications field, where the 
installation of better infrastructures in a given node improves the quality of the
transmissions to and from all the neighbours of the node.

We are interested in studying the effect of upgrading nodes in the THLP. Thus 
we introduce in this work the THLP with  upgrading (THLPU), a mixture of the 
MSTPU and the THLP where, in addition to locate the hubs, determine a tree
structure for them, and allocate every non-hub node to a hub, one has to decide which hubs are upgraded, assuming that the number of nodes that can be upgraded is given. We propose and analyze two different formulations for the problem. These formulations 
are compared in terms of the lower bound they produce and the computational 
effort required by a standard optimization solver to provide an optimal solution. The 
formulations share a kernel based on the THLP structure and differ in the way they 
account for the reduced costs that the upgrading of nodes produce on the edges 
of the small tree.

The paper is organized as follows. In Section \ref{sec:2} we introduce the problem and fix the notation for the rest of the paper. In Section \ref{sec:3} we present a first Mixed Integer Linear Programming formulation for the THLPU based on the ideas presented in \cite{contreras} but introducing a new family of variables to represent the new cost structure.  We also provide sets of valid inequalities that allow us to tighten the model, and report the results of a preliminary battery of computational experiments to draw some conclusions on the weakness of this model. Section \ref{sec:4} is devoted to present an alternative MILP formulation for the problem by disaggregating the variables representing the flow circulating through the hubs and also we extend the family of valid inequalities to the disaggregated model as well as the separation strategy. In Section \ref{sec:6} we present the results of our computational experiments for the disaggregated formulation, and we compare it with the first (non-disaggregated) formulation. Finally, in Section \ref{sec:7} we draw some conclusions and further research on the topic.

\section{Tree of hubs with upgraded nodes location}\label{sec:2}

In this section we introduce the Tree of Hubs Location Problem with Upgrading (THLPU) and fix the notation for the rest of the sections. 

Given a set of customers and a matrix of flows between each pair of customers, the goal of THLPU is three-fold:
\begin{enumerate}
\item Locate a given number of hubs among the customers and allocate each of the remainder customers to a single hub (\textit{Hub Location}),
\item adequately connect the hubs with a tree structure (\textit{Tree of Hubs}), and
\item upgrade a pre-specified number of hubs, provided that a reduction on the cost is performed with the upgrading (\textit{Upgrading Hubs}),
\end{enumerate}
by minimizing the overall sum of the transportation costs.

In order to formulate the problem, we will use the following list of parameters, which are widely used in single-allocation hub location models:
\begin{itemize}
\item $N=\{ 1,\ldots ,n\}$: the set of origins and destinations of the flow, which acts, without loss of generality, as the set of potential hubs.
\item $G=(N,E)$: the undirected, connected graph with set of edges $E$ through which the  flow must be sent. In this paper, we use the notation $\{k,m\}$ to identify  an edge and $(k,m)$ to identify an arc (directed edge).
\item $w_{ij}\geq0$: the amount of flow sent from origin $i$ to destination $j$, 
for all $i, j \in N$.
\item $p\in \{ 2,\ldots ,n-1\}$: the number of hubs to be located.
\item $q\in \{ 1,\ldots ,p-1\}$: the number of hubs to be upgraded.
\end{itemize}

In what follows, we describe a cost structure inspired in the literature on network upgrading problems, that has as a 
particular case the cost structure of the classical discrete hub location problems. For each $\{i,j\}\in E$, we introduce the following unit cost associated to the flow between $i$ and $j$:
\begin{itemize}
\item $d_{ij}\ge 0$ if one of the extremes is a hub and the other is a spoke or $i=j$ (in which case, we assume that $d_{ii}=0$).
\item $c_{ij}\in [0,d_{ij}]$ if both $i$ and $j$ are hubs and none of them has been upgraded.
\item $c'_{ij}\in [0,c_{ij}]$ if both $i$ and $j$ are hubs and exactly one ($i$ or $j$) has been upgraded.
\item $c''_{ij}\in [0,c'_{ij}]$ if both $i$ and $j$ are upgraded hubs.
\end{itemize}

With the above notation, the cost of $x$ units of flow traversing an edge $\{i,j\}\in E$ will be:
$$
\left\{\begin{array}{cl}
x\cdot d_{ij} & \mbox{if only one of the extremes of $\{i,j\}$ is a hub,}\\
x\cdot c_{ij} & \mbox{if both $i$ and $j$ are hubs, but none of them are upgraded,}\\
x\cdot c'_{ij} & \mbox{if both $i$ and $j$ are hubs, and only one of them is upgraded,}\\
x\cdot c''_{ij} & \mbox{if both $i$ and $j$ are upgraded hubs.}\\
\end{array}\right.
$$

Observe also that the above settings for the problem under study can be easily adapted to deal with the case of many
variants considered in the literature, as: (i) separated sets of origins and potential hubs; (ii)  fixed costs associated to the opening of hubs and/or the link of these hubs; or (iii) different discount factors associated with the costs of the first and last edges in each route and the costs between all kinds of hubs.

Costs $d_{ij}$, $c_{ij}$, $c'_{ij}$ and $c''_{ij}$ are considered non negative, but 
no other condition is required, e.g., satisfaction of the triangle inequality or symmetry. In most cases, the simplest unit costs, $d_{ij}$, are assumed to be the Euclidean distances between the pairs of nodes. In \cite{paik}, the authors consider a particular structure of the above costs, 
based on the use of reduction factors over the $d$-costs, and that can be adapted to the general settings. Given $\alpha, \rho, \gamma \in [0,1]$, with $\alpha \geq \rho \geq \gamma$, one may define $c_{ij}= \alpha d_{ij}$, $c'_{ij}= \rho d_{ij}$ and $c''_{ij}= \gamma d_{ij}$, for all $\{i,j\}\in E$. With this cost structure, the available instances for $p$-hub location can be easily adapted for the THLPU. Also, observe that if $c_{ij}=c'_{ij}=c''_{ij}$ the  problem becomes the THLP, being the THLPU a generalization of such a problem.

Additional parameters used to simplify the forthcoming formulations are:

\begin{itemize}
\item $O_i=\dsum_{j: \{i,j\}\in E} w_{ij}$: the total amount of flow sent from origin $i\in N$.
\item $D_i=\dsum_{j: \{i,j\}\in E}  w_{ji}$: the total amount of flow sent to destination $i\in N$.
\item $O_{ikm}=\left\{ \begin{array}{cc} 
O_i-w_{ii}-\min \{ w_{ik},w_{im}\} & \hbox{ if } i\neq k \\
O_i-w_{ii} & \hbox{ if } i=k 
\end{array} \right. $: an upper bound on 
the amount of flow with origin $i$ that can traverse edge $\{k,m\}$ (in any direction), for all $i\in N$, $k<m\in N$.
\end{itemize}

In Figure \ref{xxy} we illustrate how $O_{ikm}$ is calculated. An origin $i$ with $O_i=100$ is assumed, 
and two hubs, $k$ and $m$ are selected. In the figure, $w_{ii}=5$, $w_{ik}=4$ and $w_{im}=7$. The flow 
with origin $i$ that traverses the link between $k$ and $m$ will not include the $w_{ii}$ units
going from $i$ to itself, since they will remain at $i$ (in case $i$ is a hub) or it will go to 
only one hub and back (in case $i$ is a spoke). On the other hand, either the path in the large tree from $i$ to 
$k$ will traverse $m$ or the path from $i$ to $m$ will traverse $k$. This means that 
the flow associated with edge $\{k,m\}$ will not include either $w_{ik}$ or $w_{im}$ units. 
In the figure, the worst case is $\min \{w_{ik}, w_{im}\} = w_{ik}=4$, thus the upper bound can be fixed in 
$100-5-4=91$ units. When $k=i$ the last argument does not apply, but 
still $w_{ii}$ can be subtracted from $O_i$. 
\begin{figure}[h]
\begin{center}
\begin{tikzpicture}[xscale=0.65,yscale=0.65,shorten >=1pt]
\tikzstyle{arrow1} = [thick,scale=3,->,>=stealth]
\tikzstyle{arrow2} = [thick,scale=3,<->,>=stealth]

\coordinate(X1) at (8.426952,0.357648);
\coordinate(X2) at (7.566704,9.700764);
\coordinate(X3) at (9.119774,3.694816);
\coordinate(X4) at (5.128467,9.970362);
\coordinate(X5) at (3.674127,5.450939);
\coordinate(X6) at (8.709690,8.260512);
\coordinate(X7) at (5.651874,2.469880);
\coordinate(X8) at (9.677091,5.847142);
\coordinate(X9) at (3.998395,8.242882);
\coordinate(X10) at (4.199842,2.748433);
\node[circle,draw](X-1) at (X1) {};
\node[circle,draw](X-2) at (X2) {};
\node[circle,draw](X-3) at (X3) {$m$};
\node[circle,draw](X-4) at (X4) {};
\node[circle,draw](X-5) at (X5) {};
\node[circle,draw](X-6) at (X6) {$i$};
\node[circle,draw](X-7) at (X7) {};
\node[circle,draw](X-8) at (X8) {};
\node[circle,draw](X-9) at (X9) {$k$};
\node[circle,draw](X-10) at (X10) {};
\node[xshift=0.5cm, yshift=0.5cm] at (X6) {$O_i=100$};
\node[xshift=1cm] at (X6) {$w_{ii}=5$};
\node[yshift=-1.5cm, xshift=-0.1cm,rotate=95] at (X6) {$w_{im}=7$};
\node[xshift=2cm, yshift=0.2cm] at (X9) {$w_{ik}=4$};

\node[yshift=-1.5cm, xshift=2cm,rotate=320] at (X9) {Flow $\leq 91$};

\draw (X-1)--(X-3);
\draw (X-2)--(X-6);
\draw (X-3)--(X-1);
\draw [dashed, arrow1] (X-6)--(X-3);
\draw [dotted, arrow2, very thick](X-3)--(X-9);
\draw (X-3)--(X-7);
\draw [dotted, arrow1,very thick](X-6)--(X-9);
\draw (X-4)--(X-9);
\draw (X-5)--(X-9);
\draw (X-7)--(X-3);
\draw (X-8)--(X-3);
\draw (X-10)--(X-7);

\end{tikzpicture}
\caption{Upper bound on the flow with origin in $i$ traversing 
edge $\{ k,m\}$.\label{xxy}}
\end{center}
\end{figure}
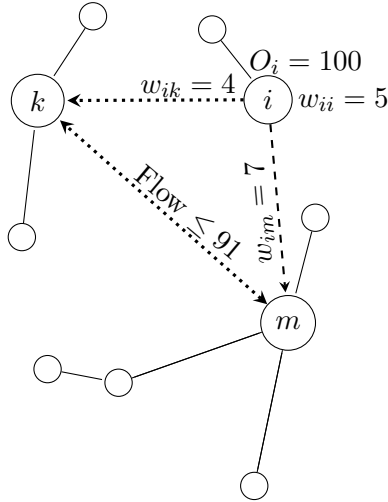
With the above notation, the goals of THLPU are drawn in Figure \ref{fig2}. The first decision to be made is to choose a subset of $p$ nodes, among those in $N$, to be used as hubs (filled nodes) and link the hubs in tree shape (we will call this 
the {\em small tree} --see Figure \subref{sfig:1}). Second, one has to allocate
every non-hub node to a hub, giving rise to the so-called {\em large tree} (see Figure 
\subref{sfig:2}). And finally, one has to decide which $q$ nodes to upgrade out of the $p$ hubs
(see Figure \subref{sfig:3}) in such a way that the total cost be minimized (upgraded nodes are represented by ring nodes in the picture). The three decisions are to be made simultaneously.

\setlength{\abovecaptionskip}{-2pt}
\begin{figure}[h]
\begin{center}
\begin{subfigure}[b]{0.3\textwidth}
\fbox{\begin{tikzpicture}[scale=0.6]
\coordinate(X1) at (8.426952,0.357648);
\coordinate(X2) at (7.566704,9.700764);
\coordinate(X3) at (9.119774,3.694816);
\coordinate(X4) at (5.128467,9.970362);
\coordinate(X5) at (3.674127,5.450939);
\coordinate(X6) at (8.709690,8.260512);
\coordinate(X7) at (5.651874,2.469880);
\coordinate(X8) at (9.677091,5.847142);
\coordinate(X9) at (3.998395,8.242882);
\coordinate(X10) at (4.199842,2.748433);
\node[circle,draw,fill](X-1) at (X1) {};
\node[circle,draw](X-2) at (X2) {};
\node[circle,draw,fill](X-3) at (X3) {};
\node[circle,draw](X-4) at (X4) {};
\node[circle,draw](X-5) at (X5) {};
\node[circle,draw,fill](X-6) at (X6) {};
\node[circle,draw,fill](X-7) at (X7) {};
\node[circle,draw](X-8) at (X8) {};
\node[circle,draw,fill](X-9) at (X9) {};
\node[circle,draw](X-10) at (X10) {};

\draw[very thick] (X-9)--(X-3);
\draw[very thick] (X-6)--(X-3);
\draw[very thick] (X-7)--(X-3);
\draw[very thick] (X-1)--(X-3);
\end{tikzpicture}}\caption{\label{sfig:1}}
\end{subfigure}~\begin{subfigure}[b]{0.3\textwidth}
\fbox{\begin{tikzpicture}[scale=0.6]
\coordinate(X1) at (8.426952,0.357648);
\coordinate(X2) at (7.566704,9.700764);
\coordinate(X3) at (9.119774,3.694816);
\coordinate(X4) at (5.128467,9.970362);
\coordinate(X5) at (3.674127,5.450939);
\coordinate(X6) at (8.709690,8.260512);
\coordinate(X7) at (5.651874,2.469880);
\coordinate(X8) at (9.677091,5.847142);
\coordinate(X9) at (3.998395,8.242882);
\coordinate(X10) at (4.199842,2.748433);
\node[circle,draw,fill](X-1) at (X1) {};
\node[circle,draw](X-2) at (X2) {};
\node[circle,draw,fill](X-3) at (X3) {};
\node[circle,draw](X-4) at (X4) {};
\node[circle,draw](X-5) at (X5) {};
\node[circle,draw,fill](X-6) at (X6) {};
\node[circle,draw,fill](X-7) at (X7) {};
\node[circle,draw](X-8) at (X8) {};
\node[circle,draw,fill](X-9) at (X9) {};
\node[circle,draw](X-10) at (X10) {};

\draw[very thick] (X-9)--(X-3);
\draw[very thick] (X-6)--(X-3);
\draw[very thick] (X-7)--(X-3);
\draw[very thick] (X-1)--(X-3);

\draw (X-4)--(X-9);
\draw (X-5)--(X-9);
\draw (X-2)--(X-6);
\draw (X-8)--(X-3);
\draw (X-10)--(X-7);
\end{tikzpicture}}
\caption{\label{sfig:2}}
\end{subfigure}~\begin{subfigure}[b]{0.3\textwidth}
\fbox{\begin{tikzpicture}[scale=0.6]
\coordinate(X1) at (8.426952,0.357648);
\coordinate(X2) at (7.566704,9.700764);
\coordinate(X3) at (9.119774,3.694816);
\coordinate(X4) at (5.128467,9.970362);
\coordinate(X5) at (3.674127,5.450939);
\coordinate(X6) at (8.709690,8.260512);
\coordinate(X7) at (5.651874,2.469880);
\coordinate(X8) at (9.677091,5.847142);
\coordinate(X9) at (3.998395,8.242882);
\coordinate(X10) at (4.199842,2.748433);
\node[circle,draw,fill](X-1) at (X1) {};
\node[circle,draw](X-2) at (X2) {};
\node[circle,draw,line width=1mm](X-3) at (X3) {};
\node[circle,draw](X-4) at (X4) {};
\node[circle,draw](X-5) at (X5) {};
\node[circle,draw,fill](X-6) at (X6) {};
\node[circle,draw,fill](X-7) at (X7) {};
\node[circle,draw](X-8) at (X8) {};
\node[circle,draw,line width=1mm](X-9) at (X9) {};
\node[circle,draw](X-10) at (X10) {};

\draw[very thick] (X-9)--(X-3);
\draw[very thick] (X-6)--(X-3);
\draw[very thick] (X-7)--(X-3);
\draw[very thick] (X-1)--(X-3);

\draw (X-4)--(X-9);
\draw (X-5)--(X-9);
\draw (X-2)--(X-6);
\draw (X-8)--(X-3);
\draw (X-10)--(X-7);
\end{tikzpicture}}\caption{\label{sfig:3}}
\end{subfigure}
\end{center}
\caption{Decisions in THLPU: Construction of the small tree (left); allocate spokes to hubs (center); and upgrade nodes (right).\label{fig2}}
\end{figure}
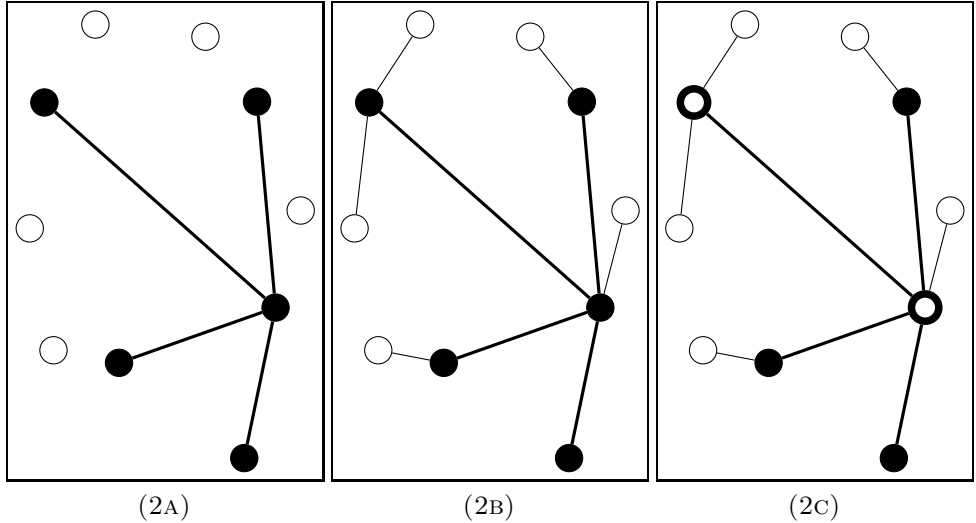

The above mentioned overall cost is obtained adding up the costs associated with all the elements $(i,j)$ in 
$N\times N$. Given $i, j \in N$, its associated cost is the minimum of the costs 
of the walks from $i$ to $j$ in the large tree which traverse at least one hub, times 
the amount of flow $w_{ij}$. We consider several possible cases in Figure \ref{fig3}. 
In Subfigure \subref{sfig:4}, origin $i$ and destination $j$ are different non-hub nodes. The travel cost of sending $w_{ij}$ from $i$ to $j$ is given by the unique path between $i$ and $j$ in the large 
tree. The same occurs when $i$ or $j$ (or both) are hub nodes (see Subfigure \subref{sfig:5}). In case $i=j$, two different situations may happen: $i$ is a hub or $i$ is a non-hub node. In the first case, the $w_{ii}$ units of product 
do not carry any cost (recall that $d_{ii}=0$). In the second,  the $w_{ii}$ units of product are assigned to the hub to which $i$ has 
been allocated and back, repeating the edge; for this reason the optimal route is not always 
a path but sometimes a walk in the graph $G$ (see Subfigure \subref{sfig:6}).

The cost of a given path is the overall sum of the costs of its edges,
taking into account that these depend on the category of the two extremes of the edge: 
non-hub, non-upgraded hub or upgraded hub.

\begin{figure}[h]
\begin{center}
\begin{subfigure}[b]{0.3\textwidth}
\fbox{\begin{tikzpicture}[scale=0.6]
\tikzstyle{arrow} = [thick,scale=3,->,>=stealth]
\coordinate(X1) at (8.426952,0.357648);
\coordinate(X2) at (7.566704,9.700764);
\coordinate(X3) at (9.119774,3.694816);
\coordinate(X4) at (5.128467,9.970362);
\coordinate(X5) at (3.674127,5.450939);
\coordinate(X6) at (8.709690,8.260512);
\coordinate(X7) at (5.651874,2.469880);
\coordinate(X8) at (9.677091,5.847142);
\coordinate(X9) at (3.998395,8.242882);
\coordinate(X10) at (4.199842,2.748433);
\node[circle,draw,fill](X-1) at (X1) {};
\node[circle,draw,inner sep=1pt](X-2) at (X2) {$i$};
\node[circle,draw,line width=1mm](X-3) at (X3) {};
\node[circle,draw](X-4) at (X4) {};
\node[circle,draw,inner sep=1pt](X-5) at (X5) {$j$};
\node[circle,draw,fill](X-6) at (X6) {};
\node[circle,draw,fill](X-7) at (X7) {};
\node[circle,draw](X-8) at (X8) {};
\node[circle,draw,line width=1mm](X-9) at (X9) {};
\node[circle,draw](X-10) at (X10) {};

\draw[very thick, dashed,arrow] (X-3)--(X-9);
\draw[very thick,dashed,arrow] (X-6)--(X-3);
\draw[very thick] (X-7)--(X-3);
\draw[very thick] (X-1)--(X-3);

\draw (X-4)--(X-9);
\draw[dashed,arrow] (X-9)--(X-5);
\draw[dashed,arrow] (X-2)--(X-6);
\draw (X-8)--(X-3);
\draw (X-10)--(X-7);
\end{tikzpicture}}\caption{\label{sfig:4}}
\end{subfigure}~\begin{subfigure}[b]{0.3\textwidth}
\fbox{\begin{tikzpicture}[scale=0.6]
\tikzstyle{arrow} = [thick,scale=3,->,>=stealth]
\coordinate(X1) at (8.426952,0.357648);
\coordinate(X2) at (7.566704,9.700764);
\coordinate(X3) at (9.119774,3.694816);
\coordinate(X4) at (5.128467,9.970362);
\coordinate(X5) at (3.674127,5.450939);
\coordinate(X6) at (8.709690,8.260512);
\coordinate(X7) at (5.651874,2.469880);
\coordinate(X8) at (9.677091,5.847142);
\coordinate(X9) at (3.998395,8.242882);
\coordinate(X10) at (4.199842,2.748433);
\node[circle,draw,fill,inner sep=1pt](X-1) at (X1) {{\color{white}$j$}};
\node[circle,draw,inner sep=1pt](X-2) at (X2) {$i$};
\node[circle,draw,line width=1mm](X-3) at (X3) {};
\node[circle,draw](X-4) at (X4) {};
\node[circle,draw](X-5) at (X5) {};
\node[circle,draw,fill](X-6) at (X6) {};
\node[circle,draw,fill](X-7) at (X7) {};
\node[circle,draw](X-8) at (X8) {};
\node[circle,draw,line width=1mm](X-9) at (X9) {};
\node[circle,draw](X-10) at (X10) {};

\draw[very thick] (X-3)--(X-9);
\draw[very thick,dashed,arrow] (X-6)--(X-3);
\draw[very thick] (X-7)--(X-3);
\draw[very thick,dashed, arrow] (X-3)--(X-1);

\draw (X-4)--(X-9);
\draw (X-9)--(X-5);
\draw[dashed,arrow] (X-2)--(X-6);
\draw (X-8)--(X-3);
\draw (X-10)--(X-7);
\end{tikzpicture}}\caption{\label{sfig:5}}
\end{subfigure}~\begin{subfigure}[b]{0.3\textwidth}
\fbox{\begin{tikzpicture}[scale=0.6]
\tikzstyle{arrow} = [thick,scale=3,->,>=stealth]
\coordinate(X1) at (8.426952,0.357648);
\coordinate(X2) at (7.566704,9.700764);
\coordinate(X3) at (9.119774,3.694816);
\coordinate(X4) at (5.128467,9.970362);
\coordinate(X5) at (3.674127,5.450939);
\coordinate(X6) at (8.709690,8.260512);
\coordinate(X7) at (5.651874,2.469880);
\coordinate(X8) at (9.677091,5.847142);
\coordinate(X9) at (3.998395,8.242882);
\coordinate(X10) at (4.199842,2.748433);
\node[circle,draw,fill](X-1) at (X1) {};
\node[circle,draw,inner sep=1pt](X-2) at (X2) {$i$};
\node[circle,draw,line width=1mm](X-3) at (X3) {};
\node[circle,draw](X-4) at (X4) {};
\node[circle,draw](X-5) at (X5) {};
\node[circle,draw,fill](XA6) at (X6) {};
\node[circle,draw,fill](X-7) at (X7) {};
\node[circle,draw](X-8) at (X8) {};
\node[circle,draw,line width=1mm](X-9) at (X9) {};
\node[circle,draw](X-10) at (X10) {};

\draw[very thick] (X-3)--(X-9);
\draw[very thick] (X-6)--(X-3);
\draw[very thick] (X-7)--(X-3);
\draw[very thick] (X-3)--(X-1);

\draw (X-4)--(X-9);
\draw (X-9)--(X-5);
\draw[dashed,arrow] (X-2.225) -- (X-6.180);
\draw[dashed,arrow] (X-6.90) -- (X-2.0);
\draw (X-8)--(X-3);
\draw (X-10)--(X-7);
\end{tikzpicture}}\caption{\label{sfig:6}}
\end{subfigure}
\end{center}
\caption{Computation of costs in THLPU.\label{fig3}}
\end{figure}
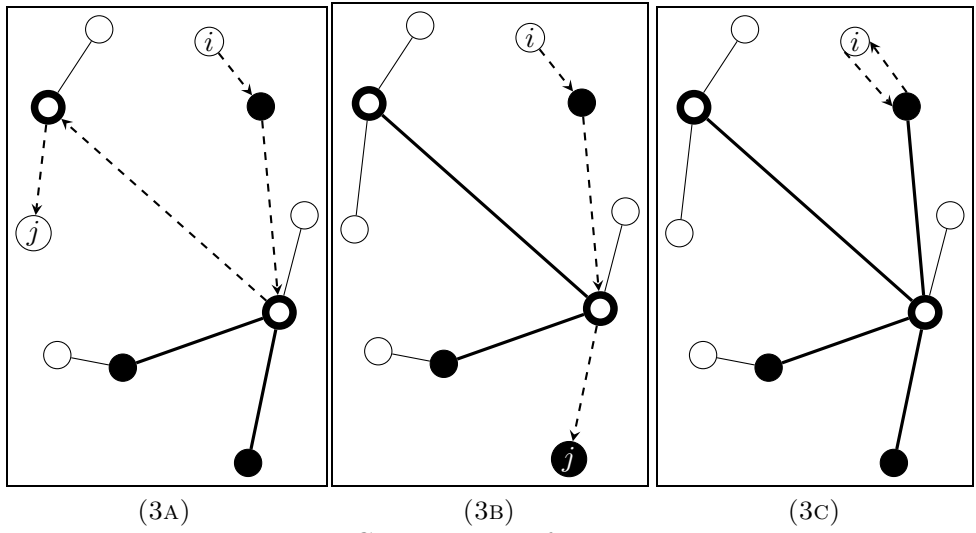

Finally, we would like to illustrate that THLPU is not equivalent to solve first the THLP and then decide the upgraded hubs, since in many simple situations, the new cost structure may change the combinatorial shape of the solution.

\begin{example}\label{example}
Let us consider the following $10$ random points on the plane $\left\{(8.43,0.36)\right.$, $(7.57,9.70), (9.12,3.69), (5.13,9.97), (3.67,5.45), (8.71,8.26)$, $(5.65,2.47), (9.68,5.85)$, $\left.(4.00,8.24), (4.20,2.75)\right\}$, with O-D flow matrix given by
$$w=\begin{pmatrix}
634&731&794&783&482&84&914&575&17&123\\
510&593&31&163&659&535&902&533&817&382\\
821&287&109&775&958&262&478&326&996&572\\
702&763&802&396&760&171&912&28&198&840\\
184&218&2&34&676&299&102&555&763&3\\
673&897&748&260&519&121&577&174&0&459\\
861&645&11&236&5&236&503&750&681&246\\
982&54&468&912&705&919&175&548&698&497\\
832&249&947&282&183&485&552&956&147&713\\
292&826&616&95&720&485&382&19&393&940
\end{pmatrix}.$$

We consider as basic cost structure the Euclidean distance between the demand points, with  discount factors $\alpha \geq \rho \geq \gamma$, such that: $\alpha$ is the discount factor over the basic cost for flow traversing non-upgraded hubs, $\rho$ for sending flow through an upgraded and a non upgraded hub, and $\gamma$ when links connecting two upgraded nodes are used. Recall that the THLPU when $\alpha=\rho=\gamma$ coincides with the THLP. In Figure \ref{fig4} one can see that the tree structure of the THLP for $p=5$ and $\alpha=0.8$ (Subfigure \subref{sfig:7}) does not necessarily  coincide with that of the THLP. In Subfigures \subref{sfig:8} and \subref{sfig:9} we draw the solutions of THLPU for $p=5$, $q=2$ (two nodes are upgraded) and two different discount factors $(\alpha,\rho,\gamma)=(0.8,0.6,0.6)$ (where no extra discount is assumed when both extremes are upgraded with respect to the case in which only one of them is) and  $(\alpha,\rho,\gamma)=(0.8,0.4,0.2)$.

\begin{figure}[h]
\begin{center}
\begin{subfigure}[b]{0.3\textwidth}
\fbox{\begin{tikzpicture}[scale=0.6]
\coordinate(X1) at (8.426952,0.357648);
\coordinate(X2) at (7.566704,9.700764);
\coordinate(X3) at (9.119774,3.694816);
\coordinate(X4) at (5.128467,9.970362);
\coordinate(X5) at (3.674127,5.450939);
\coordinate(X6) at (8.709690,8.260512);
\coordinate(X7) at (5.651874,2.469880);
\coordinate(X8) at (9.677091,5.847142);
\coordinate(X9) at (3.998395,8.242882);
\coordinate(X10) at (4.199842,2.748433);
\node[circle,draw](X-1) at (X1) {};
\node[circle,draw](X-2) at (X2) {};
\node[circle,draw,fill](X-3) at (X3) {};
\node[circle,draw](X-4) at (X4) {};
\node[circle,draw,fill](X-5) at (X5) {};
\node[circle,draw,fill](X-6) at (X6) {};
\node[circle,draw,fill](X-7) at (X7) {};
\node[circle,draw,fill](X-8) at (X8) {};
\node[circle,draw](X-9) at (X9) {};
\node[circle,draw](X-10) at (X10) {};

\draw[very thick] (X-5)--(X-6);
\draw[very thick] (X-6)--(X-8);
\draw[very thick] (X-8)--(X-3);
\draw[very thick] (X-3)--(X-7);

\draw (X-4)--(X-6);
\draw (X-9)--(X-6);
\draw (X-2)--(X-6);
\draw (X-1)--(X-3);
\draw (X-10)--(X-7);
\end{tikzpicture}}\caption{$(\alpha,\rho,\gamma)=(0.8,0.8,0.8)$\label{sfig:7}}
\end{subfigure}~
\begin{subfigure}[b]{0.3\textwidth}
\fbox{\begin{tikzpicture}[scale=0.6]
\coordinate(X1) at (8.426952,0.357648);
\coordinate(X2) at (7.566704,9.700764);
\coordinate(X3) at (9.119774,3.694816);
\coordinate(X4) at (5.128467,9.970362);
\coordinate(X5) at (3.674127,5.450939);
\coordinate(X6) at (8.709690,8.260512);
\coordinate(X7) at (5.651874,2.469880);
\coordinate(X8) at (9.677091,5.847142);
\coordinate(X9) at (3.998395,8.242882);
\coordinate(X10) at (4.199842,2.748433);
\node[circle,draw,fill](X-1) at (X1) {};
\node[circle,draw](X-2) at (X2) {};
\node[circle,draw,line width=1mm](X-3) at (X3) {};
\node[circle,draw](X-4) at (X4) {};
\node[circle,draw](X-5) at (X5) {};
\node[circle,draw,fill](X-6) at (X6) {};
\node[circle,draw,fill](X-7) at (X7) {};
\node[circle,draw](X-8) at (X8) {};
\node[circle,draw,line width=1mm](X-9) at (X9) {};
\node[circle,draw](X-10) at (X10) {};

\draw[very thick] (X-9)--(X-3);
\draw[very thick] (X-6)--(X-3);
\draw[very thick] (X-7)--(X-3);
\draw[very thick] (X-1)--(X-3);

\draw (X-4)--(X-9);
\draw (X-5)--(X-9);
\draw (X-2)--(X-6);
\draw (X-8)--(X-3);
\draw (X-10)--(X-7);
\end{tikzpicture}}\caption{$(\alpha,\rho,\gamma)=(0.8,0.6,0.6)$\label{sfig:8}}
\end{subfigure}~
\begin{subfigure}[b]{0.3\textwidth}
\fbox{\begin{tikzpicture}[scale=0.6]
\coordinate(X1) at (8.426952,0.357648);
\coordinate(X2) at (7.566704,9.700764);
\coordinate(X3) at (9.119774,3.694816);
\coordinate(X4) at (5.128467,9.970362);
\coordinate(X5) at (3.674127,5.450939);
\coordinate(X6) at (8.709690,8.260512);
\coordinate(X7) at (5.651874,2.469880);
\coordinate(X8) at (9.677091,5.847142);
\coordinate(X9) at (3.998395,8.242882);
\coordinate(X10) at (4.199842,2.748433);
\node[circle,draw,fill](X-1) at (X1) {};
\node[circle,draw,fill](X-2) at (X2) {};
\node[circle,draw,fill](X-3) at (X3) {};
\node[circle,draw](X-4) at (X4) {};
\node[circle,draw](X-5) at (X5) {};
\node[circle,draw](X-6) at (X6) {};
\node[circle,draw,line width=1mm](X-7) at (X7) {};
\node[circle,draw](X-8) at (X8) {};
\node[circle,draw,line width=1mm](X-9) at (X9) {};
\node[circle,draw](X-10) at (X10) {};

\draw[very thick] (X-9)--(X-7);
\draw[very thick] (X-9)--(X-2);
\draw[very thick] (X-7)--(X-3);
\draw[very thick] (X-1)--(X-7);

\draw (X-4)--(X-9);
\draw (X-5)--(X-9);
\draw (X-2)--(X-6);
\draw (X-8)--(X-3);
\draw (X-10)--(X-7);
\end{tikzpicture}}\caption{$(\alpha,\rho,\gamma)=(0.8,0.4,0.2)$\label{sfig:9}}
\end{subfigure}
\end{center}
\caption{Solutions for THLP and THLPU in Example \ref{example}\label{fig4}}
\end{figure}
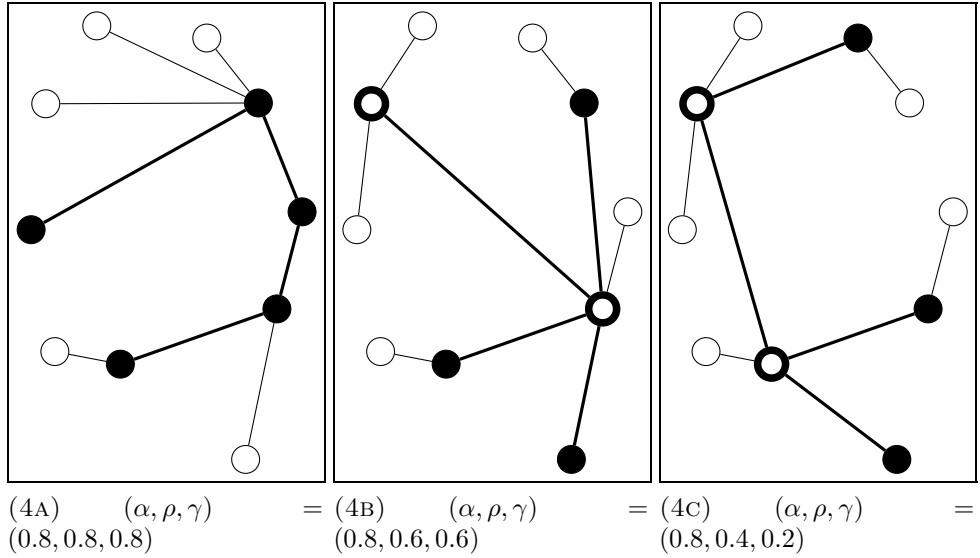

\end{example}

\section{A Mixed Integer Linear Programming formulation for THLPU}\label{sec:3}

In this section we develop a first Mixed Integer Linear Programming (MILP) formulation for the THLPU. It uses the variables formerly introduced in \cite{contreras} for the THLP 
plus two sets of specific variables $t_k$ and $\theta_{ij}$, to account for upgraded nodes and the reduced costs when the flow traverses upgraded nodes.

We first define several families of binary variables:

\begin{equation}\label{zvars}
z_{ik} = \left\{\begin{array}{cl}
1 & \mbox{if non-hub $i$ is allocated to hub $k$, for $i\neq k$}\\
1 & \mbox{if $k$ is a hub node and $i=k$}\\
0 &\mbox{otherwise}
\end{array}\right. \forall i,k \in N,
\end{equation}

\begin{equation}\label{tvars}
t_k = \left\{\begin{array}{cl}
1 & \mbox{if node  $k$ is an upgraded hub,}\\
0 &\mbox{otherwise}
\end{array}\right. \forall k \in N,
\end{equation}

\begin{equation}\label{svars}
s_{km}= \left\{\begin{array}{cl}
1 & \mbox{if $k$ and $m$ are linked hubs in the small tree,}\\
0 &\mbox{otherwise}
\end{array}\right. \forall k<m:\ \{k,m\} \in E.
\end{equation}
Observe that, as usual in discrete location problems, $z_{kk}$ taking value 1 
can be interpreted as a self-allocation of hub $k$. Hence, all nodes will be allocated to some hub. In Subfigure \subref{sfig:10}, the $z$-values equal to 1 are shown on the optimal solution for the THPLU of Example \ref{example} (for $(\alpha,\rho,\gamma)=(0.8,0.4,0.2)$). On the other hand, $t_k$ will take value 1 if $k$ is one 
of the nodes chosen as hubs and also upgraded, thus implying $z_{kk}=1$. In Subfigure \subref{sfig:11}, we illustrate for the same solution the nodes for which the $t$-values take value $1$. Finally, in Subfigure \subref{sfig:12} we show the values for the $s$-variables taking value 1 according with the depicted small tree 
and chosen hubs.
\begin{figure}[H]
\begin{center}
\begin{subfigure}[b]{0.3\textwidth}
\fbox{\begin{tikzpicture}[scale=0.6]
\coordinate(X1) at (8.426952,0.357648);
\coordinate(X2) at (7.566704,9.700764);
\coordinate(X3) at (9.119774,3.694816);
\coordinate(X4) at (5.128467,9.970362);
\coordinate(X5) at (3.674127,5.450939);
\coordinate(X6) at (8.709690,8.260512);
\coordinate(X7) at (5.651874,2.469880);
\coordinate(X8) at (9.677091,5.847142);
\coordinate(X9) at (3.998395,8.242882);
\coordinate(X10) at (4.199842,2.748433);
\node[circle,draw,fill,inner sep=2pt](X-1) at (X1) {\small {\color{white} $1$}};
\node[circle,draw,fill,inner sep=2pt](X-2) at (X2) {\small {\color{white} $1$}};
\node[circle,draw,fill,inner sep=2pt](X-3) at (X3) {\small {\color{white} $1$}};
\node[circle,draw](X-4) at (X4) {};
\node[circle,draw](X-5) at (X5) {};
\node[circle,draw](X-6) at (X6) {};
\node[circle,draw,line width=1mm,inner sep=2pt](X-7) at (X7) {\small $1$};
\node[circle,draw](X-8) at (X8) {};
\node[circle,draw,line width=1mm,inner sep=2pt](X-9) at (X9) {\small $1$};
\node[circle,draw](X-10) at (X10) {};

\draw[very thick] (X-9)--(X-7);
\draw[very thick] (X-9)--(X-2);
\draw[very thick] (X-7)--(X-3);
\draw[very thick] (X-1)--(X-7);

\draw (X-4) -- node [midway, above,left]{\tiny$1$} (X-9);
\draw (X-5) -- node [midway, above,left]{\tiny$1$}  (X-9);
\draw (X-2) -- node [midway, above,left]{\tiny$1$}  (X-6);
\draw (X-8) -- node [midway, above,left]{\tiny$1$}  (X-3);
\draw (X-10) -- node[midway,below,pos=0.42]{\tiny$1$}  (X-7);

\end{tikzpicture}}\caption{\label{sfig:10}}
\end{subfigure}~\begin{subfigure}[b]{0.3\textwidth}
\fbox{\begin{tikzpicture}[scale=0.6]
\coordinate(X1) at (8.426952,0.357648);
\coordinate(X2) at (7.566704,9.700764);
\coordinate(X3) at (9.119774,3.694816);
\coordinate(X4) at (5.128467,9.970362);
\coordinate(X5) at (3.674127,5.450939);
\coordinate(X6) at (8.709690,8.260512);
\coordinate(X7) at (5.651874,2.469880);
\coordinate(X8) at (9.677091,5.847142);
\coordinate(X9) at (3.998395,8.242882);
\coordinate(X10) at (4.199842,2.748433);
\node[circle,draw,fill,inner sep=2pt](X-1) at (X1) {\small {\color{white} $0$}};
\node[circle,draw,fill,inner sep=2pt](X-2) at (X2) {\small {\color{white} $0$}};
\node[circle,draw,fill,inner sep=2pt](X-3) at (X3) {\small {\color{white} $0$}};
\node[circle,draw](X-4) at (X4) {};
\node[circle,draw](X-5) at (X5) {};
\node[circle,draw](X-6) at (X6) {};
\node[circle,draw,line width=1mm,inner sep=2pt](X-7) at (X7) {\small $1$};
\node[circle,draw](X-8) at (X8) {};
\node[circle,draw,line width=1mm,inner sep=2pt](X-9) at (X9) {\small $1$};
\node[circle,draw](X-10) at (X10) {};

\draw[very thick] (X-9)--(X-7);
\draw[very thick] (X-9)--(X-2);
\draw[very thick] (X-7)--(X-3);
\draw[very thick] (X-1)--(X-7);

\draw (X-4)--(X-9);
\draw (X-5)--(X-9);
\draw (X-2)--(X-6);
\draw (X-8)--(X-3);
\draw (X-10)--(X-7);
\end{tikzpicture}}\caption{\label{sfig:11}}
\end{subfigure}~\begin{subfigure}[b]{0.3\textwidth}
\fbox{\begin{tikzpicture}[scale=0.6]
\coordinate(X1) at (8.426952,0.357648);
\coordinate(X2) at (7.566704,9.700764);
\coordinate(X3) at (9.119774,3.694816);
\coordinate(X4) at (5.128467,9.970362);
\coordinate(X5) at (3.674127,5.450939);
\coordinate(X6) at (8.709690,8.260512);
\coordinate(X7) at (5.651874,2.469880);
\coordinate(X8) at (9.677091,5.847142);
\coordinate(X9) at (3.998395,8.242882);
\coordinate(X10) at (4.199842,2.748433);
\node[circle,draw,fill](X-1) at (X1) {\tiny {\color{black} $1$}};
\node[circle,draw,fill](X-2) at (X2) {\tiny {\color{black} $1$}};
\node[circle,draw,fill](X-3) at (X3) {\tiny {\color{black} $1$}};
\node[circle,draw](X-4) at (X4) {};
\node[circle,draw](X-5) at (X5) {};
\node[circle,draw](X-6) at (X6) {};
\node[circle,draw,line width=1mm](X-7) at (X7) {\tiny {\color{white} $1$}};
\node[circle,draw](X-8) at (X8) {};
\node[circle,draw,line width=1mm](X-9) at (X9) {\tiny {\color{white} $1$}};
\node[circle,draw](X-10) at (X10) {};

\draw[very thick] (X-9)--  node [midway, above,left]{\tiny$1$} (X-7);
\draw[very thick] (X-9)--  node [pos=0.5, above]{\tiny$1$}(X-2);
\draw[very thick] (X-7)--  node [pos=0.5, above]{\tiny$1$}(X-3);
\draw[very thick] (X-1)--  node [pos=0.5, below]{\tiny$1$}(X-7);

\draw (X-4)--(X-9);
\draw (X-5)--(X-9);
\draw (X-2)--(X-6);
\draw (X-8)--(X-3);
\draw (X-10)--(X-7);
\end{tikzpicture}}\caption{\label{sfig:12}}
\end{subfigure}
\end{center}
\caption{Illustration on feasible unitary values for the $z$ (left), $t$ (center) and $s$ (right) variables.\label{fig5}}
\end{figure}
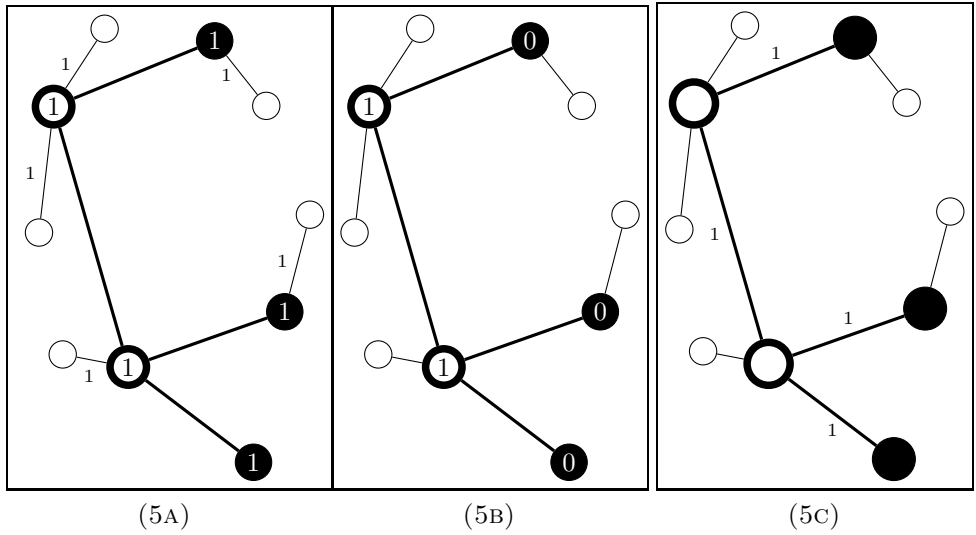
Since the objective value associated to a solution of THLPU includes the cost of the 
flow with origin in each node $i\in N$, we also consider a set of continuous variables to represent the amount of flow sent from a origin that traverses the directed link between two hubs. Hence, we split the edge $\{k,m\} \in E$ into two arcs $(k,m)$ and $(m,k)$, and we define:
\begin{itemize}
\item $r_{ikm}$: amount of flow with origin in node $i$ that traverses arc $(k,m)$ if $k$ and $m$ are both hubs and $k$ and $m$ are linked in the small tree. 
\end{itemize}

We illustrate the values for this set of variables for the same example as above in Figure \ref{fig6}, for the flow with a fixed origin node $i$ (the 
amount inside the nodes represent each of the flows $w_{ij}$ for $j \in N$).
\setlength{\abovecaptionskip}{6pt}
\begin{figure}[h]
\begin{center}
\fbox{\begin{tikzpicture}[scale=0.5]
\coordinate(X1) at (8.426952,0.357648);
\coordinate(X2) at (7.566704,9.700764);
\coordinate(X3) at (9.119774,3.694816);
\coordinate(X4) at (5.128467,9.970362);
\coordinate(X5) at (3.674127,5.450939);
\coordinate(X6) at (8.709690,8.260512);
\coordinate(X7) at (5.651874,2.469880);
\coordinate(X8) at (9.677091,5.847142);
\coordinate(X9) at (3.998395,8.242882);
\coordinate(X10) at (4.199842,2.748433);
\node[circle,draw,fill, inner sep=2pt](X-1) at (X1) {\scriptsize \color{white}$673$};
\node[circle,draw,fill, inner sep=2pt](X-2) at (X2) {\scriptsize \color{white}$897$};
\node[circle,draw,fill, inner sep=2pt](X-3) at (X3) {\scriptsize \color{white}$748$};
\node[circle,draw, inner sep=2pt](X-4) at (X4) {\scriptsize $260$};
\node[circle,draw, inner sep=2pt](X-5) at (X5) {\scriptsize $519$};
\node[circle,draw, inner sep=2pt](X-6) at (X6) {\scriptsize $121$};
\node[circle,draw,line width=1mm, inner sep=2pt](X-7) at (X7) {\scriptsize $577$};
\node[circle,draw, inner sep=2pt](X-8) at (X8) {\scriptsize $174$};
\node[circle,draw,line width=1mm, inner sep=4pt](X-9) at (X9) {\scriptsize $0$};
\node[circle,draw, inner sep=2pt](X-10) at (X10) {\scriptsize $459$};

\node[above,right] at (9.2,8.5) {$i$};

\draw[very thick] (X-2) -- node [below]{\tiny $3410$} (X-9);
\draw[very thick] (X-7) -- node [above]{\tiny $673$} (X-1);
\draw[very thick] (X-7) -- node [above]{\tiny $922$} (X-3);
\draw[very thick] (X-9) -- node [above,right]{\tiny $2631$} (X-7);

\draw (X-4)--(X-9);
\draw (X-5)--(X-9);
\draw (X-2)--(X-6);
\draw (X-8)--(X-3);
\draw (X-10)--(X-7);
\end{tikzpicture}}
\caption{$r$-values associated with a given origin $i$\label{fig6}}
\end{center}
\end{figure}
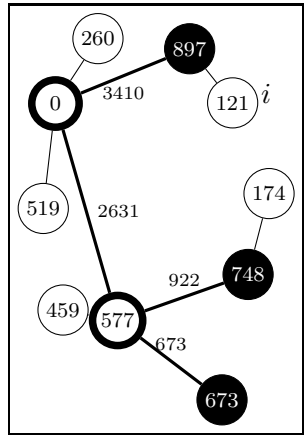
Finally,  we also consider a family of continuous variables, $\theta$, to account for the costs of the flows between hubs. This will depend on the type of the extremes (hubs) of the arcs, i.e., if they are upgraded or not:
\begin{itemize}
\item $\theta_{ikm}$:  total cost of the flow with origin in node $i$ which traverses  edge $\{k,m\}$, $k<m\in N$, if $k$ and $m$ are both hubs.
\end{itemize}

Observe that the $\theta$-variables are directly related with the $r$-variables, in the sense that $\theta_{ikm}$ is proportional to $r_{ikm}+r_{imk}$.
 In particular, with the notation above, we get that
$$
\theta_{ikm} = \left\{\begin{array}{cl}
c_{km} (r_{ikm}+r_{imk}) & \mbox{if $k$ and $m$ are hubs and none of them is upgraded,}\\
c'_{km} (r_{ikm}+r_{imk}) & \mbox{if $k$ and $m$ are hubs and only one of them is upgraded,}\\
c''_{km} (r_{ikm}+r_{imk}) & \mbox{if $k$ and $m$ are upgraded hubs,}\\
0 & \mbox{otherwise,}
\end{array}\right.
$$
for $i\in N$, $\{k,m\} \in E$ with $k<m$.

We are now in a position to state the first MILP formulation for THLPU.
\begin{align}
 \min  &\sum_{i\in N} \sum_{k=1:\atop k\neq i}^n (O_i d_{ik} + D_i d_{ki}) z_{ik} + \sum_{i\in N} 
 \sum_{k=1}^{n-1} \sum_{m=k+1:\atop \{k,m\} \in E}^n \theta_{ikm} \label{thlpu1}\tag{${\rm THLPU}$} \\
\mbox{s.t. } 
  & \sum_{k\in N} z_{kk} = p, & \label{a1}\\
  & \sum_{k=1}^{n-1} \sum_{m=k+1:\atop \{k,m\}\in E}^n s_{km} = p-1,  \label{st1} \\
  & \sum_{k\in N} z_{ik} = 1, \hspace{0.3cm}  \forall i\in N,  \label{b1}\\
  & s_{km} + z_{mk} \le z_{kk}, \hspace{0.3cm} \forall k<m\in N:\ \{k,m\}\in E,\label{ca1} \\
  & s_{km} + z_{km} \le z_{mm}, \hspace{0.3cm} \forall k<m\in N:\ \{k,m\}\in E, \label{cb1} \\
  & \sum_{k\in N} t_k = q,   \label{d1}\\
  & t_k \le z_{kk}, \hspace{0.3cm}  \forall k\in N, \label{e1}\\
  & r_{ikm} + r_{imk} \le O_{ikm} s_{km}, \forall i\neq k<m\in N:\ \{k,m\}\in E, \label{g11a} \\
  & r_{iim} \le O_{iim} s_{\min\{ i,m\},\max\{ i,m\}}, \hspace{0.3cm} \forall \{i,m\}\in E, \label{g11b}\\
  & O_i z_{ik} + \sum_{m=1:\atop m\neq k}^n r_{imk} = \sum_{m=1:\atop m\neq k}^n r_{ikm} + \sum_{j\in N} w_{ij} z_{jk}, \forall i,k\in N,\label{h1}\\
&	\theta_{ikm} \ge c''_{km} (r_{ikm}+r_{imk}),  \forall i\in N,\ k< m\in N, \label{teta1}
        \end{align}
  \begin{align}
&	\theta_{ikm} + \Delta'_{km} O_{ikm} t_k \ge c'_{km} (r_{ikm}+r_{imk}),  \forall i\in N,\ k<m\in N:\ \{k,m\}\in E, \label{teta2a} \\
&	\theta_{ikm} + \Delta'_{km} O_{ikm} t_m \ge c'_{km} (r_{ikm}+r_{imk}), \forall i\in N,\ k<m\in N:\ \{k,m\}\in E,\label{teta2b}\\
&	\theta_{ikm}+ \Delta_{km} O_{ikm}(t_k+t_m) \ge  c_{km} (r_{ikm}+r_{imk}),  \forall i,\ k<m\in N:\{k,m\}\in E,  \label{teta3}\\
&  z_{ik},t_k\in \{0,1\}, \hspace{0.3cm} \forall i,k\in N, \nonumber \\
&  s_{km}\in \{0,1\}, \hspace{0.3cm} \forall k<m\in N:\ \{k,m\}\in E, \nonumber \\
& r_{ikm} \ge 0, \hspace{0.3cm}  \forall i\in N,\ k\neq m\in N. \nonumber
\end{align}
where $\Delta_{km} = c_{km}-c''_{km}$ and $\Delta'_{km}=c'_{km}-c''_{km}$, $\forall  k<m\in N$ with  $\{k,m\}\in E$.

In the objective function of \eqref{thlpu1}, the total cost of the flow with origin and destination in each non-hub node $i$ is added with cost $d$, and the overall sum of the  $\theta$-variables represent the costs due to flow-links between hubs.

Constraints \eqref{a1} and \eqref{st1} state the number of hubs and edges in the small tree. These two sets of constraints, plus the connection, forced by the flows between nodes, ensure that the resulting structure will be a tree. The sets of constraints \eqref{b1} guarantee that each non-hub node is allocated exactly to a single hub. Constraints \eqref{ca1} and \eqref{cb1} also fix to zero $s$-variables when one or two of their extremes are not hubs. 

Regarding the $t$-variables, constraints \eqref{d1} establish in $q$ the number of upgraded nodes, whereas \eqref{e1} ensure that the upgraded nodes will be hubs. Constraints \eqref{g11a} and \eqref{g11b} fix to zero the $r$-variables when appropriate, in this case using an upper bound $O_{ikm}$ since they are continuous variables.  Note also that the values $O_i$ could have been used as natural upper bounds on the values of these 
sums of variables, but $O_{ikm}$ is a better choice that tighten the constraints.
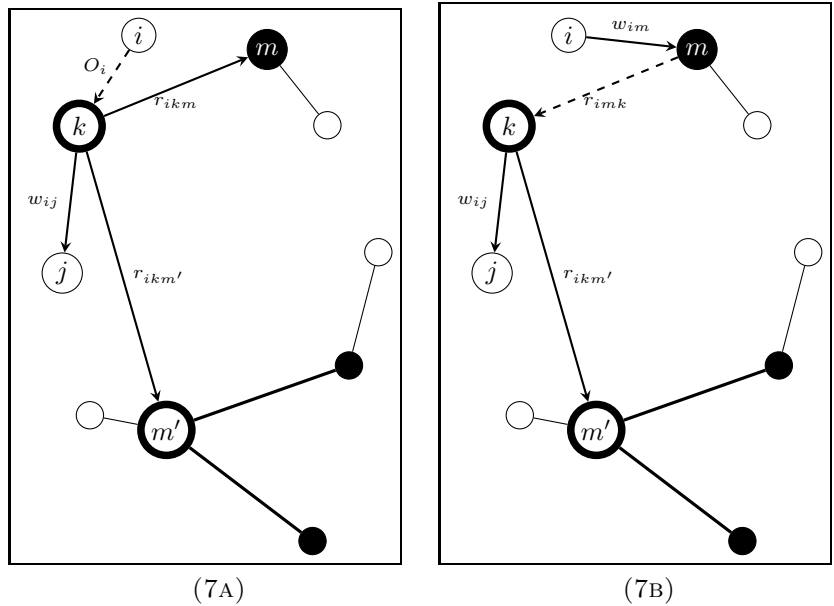
\begin{figure}[h]
\begin{center}
\begin{subfigure}[b]{0.4\textwidth}\fbox{\begin{tikzpicture}[scale=0.7]
\tikzstyle{arrow} = [thick,scale=3,->,>=stealth]
\coordinate(X1) at (8.426952,0.357648);
\coordinate(X2) at (7.566704,9.700764);
\coordinate(X3) at (9.119774,3.694816);
\coordinate(X4) at (5.128467,9.970362);
\coordinate(X5) at (3.674127,5.450939);
\coordinate(X6) at (8.709690,8.260512);
\coordinate(X7) at (5.651874,2.469880);
\coordinate(X8) at (9.677091,5.847142);
\coordinate(X9) at (3.998395,8.242882);
\coordinate(X10) at (4.199842,2.748433);
\node[circle,draw,fill](X-1) at (X1) {};
\node[circle,draw,fill,inner sep=2pt](X-2) at (X2) {\small {\color{white}$m$}};
\node[circle,draw,fill](X-3) at (X3) {};
\node[circle,draw,inner sep=2pt](X-4) at (X4) {\small $i$};
\node[circle,draw,inner sep=2pt](X-5) at (X5) {\small $j$};
\node[circle,draw](X-6) at (X6) {};
\node[circle,draw,line width=1mm,  inner sep=2pt](X-7) at (X7)  {\small {\color{black}$m^\prime$}};
\node[circle,draw](X-8) at (X8) {};
\node[circle,draw,line width=1mm, inner sep=3pt](X-9) at (X9) {\small {$k$}};
\node[circle,draw](X-10) at (X10) {};

\draw[very thick,arrow] (X-9) -- node [below]{\tiny $r_{ikm}$} (X-2);
\draw[very thick] (X-7) -- node [above]{\tiny $ $} (X-1);
\draw[very thick] (X-7) -- node [above]{\tiny $ $} (X-3);
\draw[very thick,arrow] (X-9) -- node [above,right]{\tiny $r_{ikm'}$} (X-7);

\draw (X-10)--(X-7);
\draw[arrow] (X-9)-- node[left,pos=.5] {\tiny $w_{ij}$}(X-5);
\draw[arrow, dashed] (X-4)-- node[left,pos=.3] {\tiny $O_i$} (X-9);
\draw (X-8)--(X-3);
\draw (X-6)--(X-2);
\end{tikzpicture}}
\caption{\label{sfig:13}}
\end{subfigure}~\begin{subfigure}[b]{0.4\textwidth}
\fbox{\begin{tikzpicture}[scale=0.7]
\tikzstyle{arrow} = [thick,scale=3,->,>=stealth]
\coordinate(X1) at (8.426952,0.357648);
\coordinate(X2) at (7.566704,9.700764);
\coordinate(X3) at (9.119774,3.694816);
\coordinate(X4) at (5.128467,9.970362);
\coordinate(X5) at (3.674127,5.450939);
\coordinate(X6) at (8.709690,8.260512);
\coordinate(X7) at (5.651874,2.469880);
\coordinate(X8) at (9.677091,5.847142);
\coordinate(X9) at (3.998395,8.242882);
\coordinate(X10) at (4.199842,2.748433);
\node[circle,draw,fill](X-1) at (X1) {};
\node[circle,draw,fill,inner sep=2pt](X-2) at (X2) {\small {\color{white}$m$}};
\node[circle,draw,fill](X-3) at (X3) {};
\node[circle,draw,inner sep=2pt](X-4) at (X4) {\small $i$};
\node[circle,draw,inner sep=2pt](X-5) at (X5) {\small $j$};
\node[circle,draw](X-6) at (X6) {};
\node[circle,draw,line width=1mm,  inner sep=2pt](X-7) at (X7)  {\small {\color{black}$m^\prime$}};
\node[circle,draw](X-8) at (X8) {};
\node[circle,draw,line width=1mm, inner sep=3pt](X-9) at (X9) {\small {$k$}};
\node[circle,draw](X-10) at (X10) {};

\draw[very thick,arrow,dashed] (X-2) -- node [below]{\tiny $r_{imk}$} (X-9);
\draw[very thick] (X-7) -- node [above]{\tiny $ $} (X-1);
\draw[very thick] (X-7) -- node [above]{\tiny $ $} (X-3);
\draw[very thick,arrow] (X-9) -- node [above,right]{\tiny $r_{ikm'}$} (X-7);

\draw (X-10)--(X-7);
\draw[arrow] (X-9)-- node[left,pos=.5] {\tiny $w_{ij}$}(X-5);
\draw[arrow] (X-4)-- node[above] {\tiny $w_{im}$}(X-2);
\draw (X-8)--(X-3);
\draw (X-6)--(X-2);
\end{tikzpicture}}
\caption{\label{sfig:14}}
\end{subfigure}
\caption{Graphical representation of flow conservation 
constraints \eqref{h1}} \label{fig7}
\end{center}
\end{figure}
The flow conservation constraints \eqref{h1} are graphically represented in Figure \ref{fig7}. Observe that for fixed nodes $i,k \in N$, the 
terms of the equation are non-null only when $i$ is any node but $k$ is a hub node.  In such a case, when modeling the \textit{inflow} in $k$, coming from $i$, two situations may occur: 1) $i$ is directly allocated to $k$, and 2) $i$ is not allocated to $k$. In the first case, the amount of flow with origin $i$ and which is routed via the hub node $k$ comes directly from $i$, and the amount is the total flow with origin $i$, $O_i$ (see dashed arrow in Subfigure \subref{sfig:13}). On the other hand, if $i$ is not allocated to $k$, the flow from $i$ which traverses $k$ may come from other hub $m$ (see dashed arrow in Subfigure \subref{sfig:14}), amount represented with the value of the variables $r_{imk}$. Concerning the \textit{outflow} from $k$, observe that it can be directly served to final non-hub nodes (as $j$ in Figure \subref{sfig:14}) or routed via another hub nodes (as $m'$ in Figure \subref{sfig:14}), being the amount modeled via the sum of the variables $r_{ikm}$.

Constraints \eqref{teta1}--\eqref{teta3} allow to model the values of the $\theta$-variables. For nodes $i, k, m\in N$ (for $k$ and $m$ hub nodes), whenever an $r_{ikm}$ takes a positive value (recall that by the tree structure of the large tree $r_{imk}$ will take value $0$), constraints \eqref{teta1} bound from below the cost of sending the $r_{ikm}+r_{imk}$ units of flow through 
$\{k,m\}$ to the \textit{default} minimum possible cost, i.e., $c''_{km}$. If $k$ or $m$ (but not both) are upgraded,
\eqref{teta2a}-\eqref{teta2b} change this bound to $c'_{km}$. Finally, if none of the nodes are upgraded, 
constraints \eqref{teta3} increase the bound to $c_{km}$. These constraints together with the minimization criteria, ensure that the costs of traversing hub nodes are well defined.

Observe that \eqref{thlpu1} inherits some of the valid inequalities for the THLP described in \cite{contreras} and which only concerns running flows ($r$-variables) and allocation decisions ($z$-variables):
\begin{align*}
r_{ikm}+r_{imk} \leq& \max \{O_{ikm},O_{imk}\} z_{kk},\quad \forall \{k,m\}\in E, k<m, \mbox{ and}\\
r_{ikm}+r_{imk} \leq& \max \{O_{ikm},O_{imk}\} z_{mm},\quad \forall \{k,m\}\in E, k<m.
\end{align*}

In what follows we derive a larger family of tightening inequalities for  \eqref{thlpu1} based on those obtained in \cite{ortega} and later extended and applied to solving THLP in \cite{contreras}.
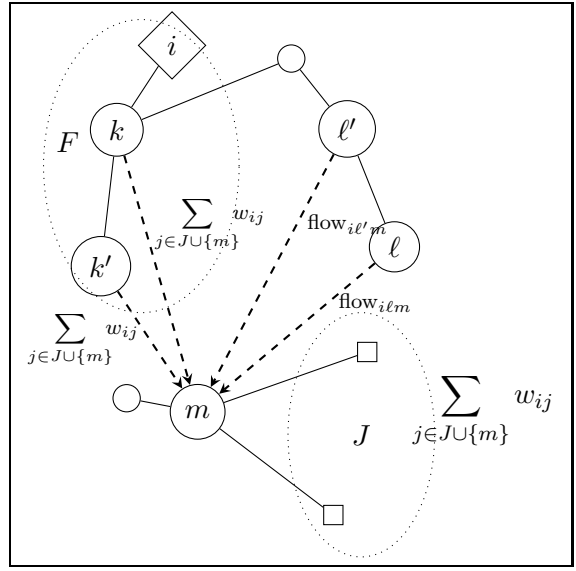
\begin{figure}[h]
\begin{center}
\fbox{\begin{tikzpicture}[scale=0.65]
\tikzstyle{arrow} = [thick,scale=3,->,>=stealth]
\coordinate(X1) at (8.426952,0.357648);
\coordinate(X2) at (7.566704,9.700764);
\coordinate(X3) at (9.119774,3.694816);
\coordinate(X4) at (5.128467,9.970362);
\coordinate(X5) at (3.674127,5.450939);
\coordinate(X6) at (8.709690,8.260512);
\coordinate(X7) at (5.651874,2.469880);
\coordinate(X8) at (9.677091,5.847142);
\coordinate(X9) at (3.998395,8.242882);
\coordinate(X10) at (4.199842,2.748433);
\node[rectangle,draw](X-1) at (X1) {};
\node[circle,draw](X-2) at (X2) {};
\node[rectangle,draw](X-3) at (X3) {};
\node[diamond,draw](X-4) at (X4) {$i$};
\node[circle,draw](X-5) at (X5) {$k'$};
\node[circle,draw](X-6) at (X6) {$\ell^\prime$};
\node[circle,draw](X-7) at (X7) {$m$};
\node[circle,draw](X-8) at (X8) {$\ell$};
\node[circle,draw](X-9) at (X9) {$k$};
\node[circle,draw](X-10) at (X10) {};

\draw (X-2) --  (X-9);
\draw (X-7) -- (X-1);
\draw (X-7) -- (X-3);

\draw (X-4)--(X-9);
\draw (X-5)--(X-9);
\draw (X-2)--(X-6);
\draw (X-10)--(X-7);

\draw[dashed, arrow] (X-5)-- node[left,pos=0.5] {\scriptsize $\dsum_{j\in J\cup\{m\}}\!\!\!w_{ij}$} (X-7);
\draw[dashed, arrow] (X-9)-- node[pos=0.3,right] {\scriptsize $\dsum_{j\in J\cup\{m\}}\!\!\!w_{ij}$} (X-7);
\draw[dashed, arrow] (X-8)-- node[pos=0.3,right] {\scriptsize ${\rm flow}_{i\ell m}$} (X-7);
\draw[dashed, arrow] (X-6)-- node[pos=0.3,right] {\scriptsize ${\rm flow}_{i\ell^\prime m}$} (X-7);

\draw (X-6)--(X-8);
\draw[dotted] (9,2) ellipse (1.5cm and 2.5cm);
\node at (9,2) {$J$};
\node at (11.5,2.5) {$\dsum_{j \in J \cup \{m\}} w_{ij}$};

\draw[dotted] (4.5,7.5) ellipse (2cm and 3cm);
\node at (3,8) {$F$};

\end{tikzpicture}}
\caption{Explanation of valid inequalities \eqref{separadas1}.\label{ineq}}
\end{center}
\end{figure}

To explain them, consider the graph depicted in Figure \ref{ineq}. Let $i\in N$ be a fixed origin (diamond-shaped in the figure) and 
$m$ be a hub to which $i$ has not been allocated (i.e., $i$ can be a hub itself or
$i$ can be a non-hub node allocated to a hub different from $m$). Let then $F$ be a subset 
of nodes not containing $m$ (nodes inside the ellipse in the north-west corner in the figure). Moreover, let $J$ be a subset of non-hub 
nodes allocated to $m$ (nodes inside the ellipse in the south-east corner in the figure). Note 
that $J$ and $F$ do not relate each other. The amount of flow which departing from $i$ is distributed through $m$ to all nodes 
in $J$, represented in the figure by $\sum_{j\in J\cup\{m\}} w_{ij}$, is a lower bound on the amount of incoming flow from $m$.
The set $F$ is used to split this incoming flow among two kind of variables, those defining the adjacency between hubs and those defining the flow circulating through hub nodes.

In \eqref{thlpu1}, the connections of hubs nodes, as well as the flow through hub nodes, are modeled by different sets of variables. The set $F \cup \{m\}$ is associated to connection of hub nodes, by means of the $s$-variables. These variables are multiplied by the amount of outgoing flow from $m$. On the other hand, nodes in 
$N\setminus (J\cup \{m\})$ are associated with the flow circulating through hub nodes, i.e., the $r$-variables, which are represented in the figure with the label ``flow''. 
Thus, provided that $i$ is not allocated to $m$, the flow with origin in $i$ will arrive to  $m$ from 
exactly one hub $k$ (because of the tree structure of the hubs). If $k\in F$, $s_{km}$ will take value 1 in the inequality; otherwise, 
$r_{ikm}$ will take the value of the outgoing flow. Therefore, the family of valid inequalities is given by

\begin{equation} \label{separadas1}
\big(\!\!\!\sum_{j\in J\cup\{m\}} w_{ij}\big)
\Big(\!\!\!\sum_{k=1:\atop \{k,m\}\in E, k \in F}^{m-1}\!\!\!\!\!\!\!\!\!s_{km} + \!\!\!\sum_{k=m+1:\atop \{k,m\}\in E, k \in F}^n \!\!\!\!\!\!\!\!\!s_{mk}\Big) + 
\sum_{k\notin F\atop \{k,m\}\in E}\!\!\!r_{ikm} \ge \sum_{j\in J\cup\{m\}} w_{ij}(z_{jm}-z_{im})
\end{equation}
for all $i,m\in N,\ F \subseteq N\setminus\{m\},\ J\subseteq N\setminus \{ i,m \}$.
Observe that the above family of valid inequalities, \eqref{separadas1}, is of exponential size. The interested is referred to \cite{contreras} in which the authors describe a separation procedure to generate violated constraints of the family \eqref{separadas1}. 

We have observed that in some cases the separation procedure is computationally costly with respect to the gain in terms of the overall consumed time and gap. Also, in many cases the \textit{optimal} sets of nodes $J$ and $F$ are singletons. Hence, we implemented a simple strategy, based on the above, that allows us to find sets in the form $J=\{j\}$ and $F=\{k\}$ with maximal violation of \eqref{separadas1} given $i, m \in N$.

Let $i, m \in N$ and $j \in N\backslash\{i,m\}$. Hence, the goal is to find $k \neq m$ such that the following inequalities are maximally violated:
\begin{equation}\label{sep1}
\big(w_{ij}+w_{im}\big)  s_{\min\{k,m\}\max\{k,m\}} +\!\!\!\! \dsum_{\ell \in N\backslash\{m,k\}} r_{i \ell m} \geq w_{ij}(z_{jm}-z_{im}) + w_{im}(z_{mm}-z_{im}).
\end{equation}
Let us denote $G_{ijm}=w_{ij}(z_{jm}-z_{im}) + w_{im}(z_{mm}-z_{im})$, $\alpha_{ijm}=w_{im}+w_{ij}$, $R_{im}^+=\sum_{\ell=m+1}^n r_{i\ell m}$ (with $R^+_{in}=0$) and $R_{im}^-=\sum_{\ell=1}^{m-1} r_{i\ell m}$ (with $R^-_{i1}=0$). The amount $\Gamma_{ijm}=G_{ijm} - R_{im}^+ - R^-_{im}$ does not depend of $k$. Hence, the minimum of the $n-1$ amounts in the sets  $\{\alpha_{ikm}s_{\ell m} - r_{i\ell m}: \ell <m\}$ and $\{\alpha_{ikm}s_{m\ell} - r_{i\ell m}: \ell >m\}$, if negative, allows us to construct the maximum violated inequality. That is, if $\ell=k$ is the index reaching the minimum amount of the above, if $\Gamma_{ijm} - \alpha_{ikm}s_{\min\{k,m\} \max\{k,m\}} - r_{ikm} <0$, the above constraints are violated and the new constraint can be added. Otherwise, all the constraints of the form \eqref{sep1} for those given $i, j$ and $m$ are verified.

\subsection{Preliminary Experiments}
\label{ss:exp0}

We have performed a series of preliminary experiments to test the MILP formulation \eqref{thlpu1} as well as the effect of the valid inequalities \eqref{separadas1}.  We have tested the model in a set of instances commonly used in the hub location literature, namely AP (Australian Post) and CAB (Civil Aeronautics Board), which are available at  \url{people.brunel.ac.uk/~mastjjb/jeb/orlib/phubinfo.html}. These instances consist of a distance matrix between cities in Australia (AP) and the United States (CAB), as well as a O-D flow matrix.
The models were coded in \texttt{Python 3.6}, and solved using \texttt{Gurobi 7.51} in a Mac OSX El Capitan with an Intel Core i7
processor at 3.3 GHz and 16GB of RAM. 

We construct the test instances following a similar structure that in \cite{contreras}. The number of nodes, $n$, ranges in $\{10,20,25\}$ for the AP dataset and in $\{10,15,20,25\}$ for the CAP dataset. The number of nodes $p$ ranges in $\{3,5,8\}$ (with $p<n$) and $q$, the number of upgraded hubs, in $\{1,3,5,8\}$ (with $q<p$). In order to use the information provided in the instances, the basic costs, i.e., the $d$-parameters (Euclidean distances between pairs of nodes) are reduced by an adequate factor. We denote by $\alpha$ the discount factor for connection between non-upgraded nodes, $\rho$ the discount factor between an upgraded node and a non-upgraded node and $\gamma$ the discount factor between two upgraded nodes. $\alpha$,  $\rho$ and $\gamma$ range in $\{0.2,0.5,0.8\}$ and such that $\alpha \geq \rho \geq \gamma$ with any of the inequalities strict to avoid running the standard THLP. With these settings, we have solved 168 instances of the CAB dataset and 126 for the AP dataset.

In tables \ref{cab0} and \ref{ap0}, we report the results of running in Gurobi both \eqref{thlpu1} and \eqref{thlpu1} with the separation procedure to add valid inequalities of the family \eqref{separadas1} (\eqref{thlpu1}+VI) for the CAB and the AP dataset, respectively. For the two procedures, we report the average duality gaps ({\sc GAP} and {\sc GAP}$_{VI}$),
the number of nodes of the branching tree ({\sc Nodes} and {\sc Nodes}$_{VI}$),  the CPU times, in seconds, needed to solve the
instances ({\sc Time} and {\sc Time}$_{VI}$), the number of valid inequalities added in the separation procedure ({\sc CUTS}) and the percentage of unsolved instances   ({\sc UnS} and {\sc UnS}$_{VI}$). In the implementation, the default Gurobi cuts were disabled and a time limit of 2 hours was considered to solve the problems. In case of reaching the time limit without optimally solving the problem, the GAP is the one with respect the best solution found.

Concerning the generation of valid inequalities of the family \eqref{sep1} and its separation procedure, we run the LP relaxed model a maximum of $10$ times. Then, for each $i, j$ and $k$ we perform the detection of violated inequalities and add the maximum violated one. We limited to $100$ the number of new cuts added to the model (we have observed that a larger number of cuts highly increases the consumed CPU time while the decreasing of the LP gap is small). A gap between consecutive lower bounds smaller that $1\%$ stops the separation procedure.

\renewcommand{\tabcolsep}{0.1cm}
{\small
\begin{table}[h]\label{t:gaps}
\centering\begin{tabular}{|c|c|c||cc|rr|rr|c|cc|}\hline
$n$ & $p$ & $q$ &{\sc GAP} & {\sc GAP}$_{VI}$ & {\sc Nodes} & {\sc Nodes}$_{VI}$ & {\sc Time} & {\sc Time}$_{VI}$ & {\sc Cuts} & {\sc UnS} & {\sc UnS}$_{VI}$\\\hline\hline
\multirow{6}{*}{10} & 3  &  1 & 14.59\% & 0.91\%  & 55  &  98   &    0.67  &  0.86  &  44.14   &   0\%   &   0\%\\\cline{2-12}
& \multirow{2}{*}{5}  &  1 & 30.17\% & 2.35\%  & 901  &  2456   &    2.66  &  5.85  &  63.29   &   0\%   &   0\%\\
   &    &  3 & 18.08\% & 1.8\%  & 434  &  824   &    0.91  &  2.18  &  78.86   &   0\%   &   0\%\\\cline{2-12}
&\multirow{3}{*}{8}  &  1 & 41.06\% & 7.66\%  & 38897  &  14418   &    62.19  &  31.28  &  92.29   &   0\%   &   0\%\\
   &    &  3 & 31.94\% & 4.45\%  & 9807  &  1324   &    10.26  &  3.47  &  93.14   &   0\%   &   0\%\\
   &    &  5 & 22.26\% & 7.51\%  & 50421  &  3978   &    39.89  &  10.25  &  100   &   0\%   &   0\%\\\hline
\multirow{6}{*}{15} & 3  &  1 & 12.86\% & 0.57\%  & 66  &  42   &    7.23  &  8.38  &  65.57   &   0\%   &   0\%\\\cline{2-12}
& \multirow{2}{*}{5}  &  1 & 26.1\% & 2.3\%  & 2210  &  1082   &    54.32  &  34.98  &  77.86   &   0\%   &   0\%\\
   &    &  3 & 14.19\% & 1.75\%  & 730  &  1765   &    11.7  &  66.74  &  87.14   &   0\%   &   0\%\\\cline{2-12}
&\multirow{3}{*}{8}  &  1 & 37.95\% & 6.1\%  & 106815  &  60480   &    2475.53  &  1581.08  &  100   &   28.57\%   &   0\%\\
   &    &  3 & 29.22\% & 4.06\%  & 18728  &  2184   &    186.98  &  62.72  &  100   &   0\%   &   0\%\\
   &    &  5 & 21.75\% & 6.71\%  & 164716  &  16187   &    1352.03  &  438.23  &  100   &   0\%   &   0\%\\\hline
\multirow{6}{*}{20} & 3  &  1 & 14.21\% & 1.07\%  & 69  &  219   &    55.34  &  93.33  &  45   &   0\%   &   0\%\\\cline{2-12}
& \multirow{2}{*}{5}  &  1 & 27.22\% & 4\%  & 8121  &  6517   &    1637.18  &  1770.63  &  95.43   &   0\%   &   0\%\\
   &    &  3 & 16.6\% & 3.38\%  & 2018  &  3559   &    361.92  &  1105.03  &  100   &   0\%   &   0\%\\\cline{2-12}
&\multirow{3}{*}{8}  &  1 & 36.95\% & 6.19\%  & 13080  &  12297   &    3213.18  &  3361.7  &  100   &   42.86\%   &   42.86\%\\
   &    &  3 & 27.96\% & 6.85\%  & 24414  &  18452   &    3077.46  &  3701.57  &  100   &   28.57\%   &   28.57\%\\
   &    &  5 & 19.44\% & 7.67\%  & 70308  &  24390   &    4387.13  &  5171.51  &  100   &   57.14\%   &   57.14\%\\\hline
\multirow{6}{*}{25} & 3  &  1 & 12.69\% & 0.79\%  & 78  &  318   &    269.22  &  546.01  &  45.14   &   0\%   &   0\%\\\cline{2-12}
& \multirow{2}{*}{5}  &  1 & 25.47\% & 4.47\%  & 2727  &  2852   &    3561.09  &  5337.84  &  98.57   &   42.86\%   &   71.43\%\\
   &    &  3 & 14.38\% & 3.51\%  & 4048  &  2914   &    2846.83  &  4385.51  &  100   &   14.29\%   &   42.86\%\\\cline{2-12}
&\multirow{3}{*}{8}  &  1 & 37.13\% & 8.4\%  & 4383  &  4679   &    5193.18  &  6958.07  &  100   &   71.43\%   &   85.71\%\\
   &    &  3 & 26.14\% & 6.12\%  & 8086  &  6714   &    5528.64  &  6763.24  &  100   &   57.14\%   &   71.43\%\\
   &    &  5 & 19.11\% & 7.07\%  & 16258  &  6165   &     $>$7200  &  5874.18  &  100   &   100\%   &   71.43\%\\\hline
      \end{tabular}
\caption{Average Results for the CAB dataset using \eqref{thlpu1}.\label{cab0}}
\end{table}}

{\small
\begin{table}[h]\label{t:gaps}
\centering\begin{tabular}{|c|c|c||cc|rr|rr|c|cc|}\hline
$n$ & $p$ & $q$ &{\sc GAP} & {\sc GAP}$_{VI}$ & {\sc Nodes} & {\sc Nodes}$_{VI}$ & {\sc Time} & {\sc Time}$_{VI}$ & {\sc Cuts} & {\sc UnS} & {\sc UnS}$_{VI}$\\\hline\hline
\multirow{6}{*}{10} & 3  &  1 & 13.58\% & 1.94\%  & 107  &  105   &    2.47  &  2.35  &  72.43   &   0\%   &   0\%\\\cline{2-12}
& \multirow{2}{*}{5}  &  1 & 25.15\% & 4.68\%  & 2558  &  3061   &    23.5  &  25.67  &  82.57   &   0\%   &   0\%\\
   &    &  3 & 15.94\% & 3.78\%  & 1038  &  1122   &    6.69  &  13.39  &  94.43   &   0\%   &   0\%\\\cline{2-12}
&\multirow{3}{*}{8}  &  1 & 39.91\% & 8.86\%  & 94849  &  22573   &    551.74  &  170.9  &  90.71   &   0\%   &   0\%\\
   &    &  3 & 31.72\% & 8.72\%  & 43448  &  6952   &    211.66  &  50.85  &  100   &   0\%   &   0\%\\
   &    &  5 & 24.73\% & 9.76\%  & 121932  &  6378   &    415.57  &  46.95  &  100   &   0\%   &   0\%\\\hline
\multirow{6}{*}{20} & 3  &  1 & 11.3\% & 1.97\%  & 247  &  434   &    229.23  &  328.59  &  83.71   &   0\%   &   0\%\\\cline{2-12}
& \multirow{2}{*}{5}  &  1 & 19.46\% & 4.51\%  & 3437  &  5599   &    3347.99  &  3924.91  &  100   &   42.86\%   &   28.57\%\\
   &    &  3 & 9.8\% & 2.54\%  & 3438  &  2434   &    1765.75  &  2136.17  &  100   &   0\%   &   0\%\\\cline{2-12}
&\multirow{3}{*}{8}  &  1 & 27.41\% & 6.64\%  & 6950  &  6553   &    4889.96  &  5267.49  &  100   &   57.14\%   &   42.86\%\\
   &    &  3 & 18.86\% & 5.15\%  & 15215  &  8451   &    5422.1  &  6950.07  &  100   &   42.86\%   &   85.71\%\\
   &    &  5 & 14.04\% & 5.57\%  & 26174  &  9799   &    6952.1  &  6321.67  &  100   &   85.71\%   &   71.43\%\\\hline
\multirow{6}{*}{25} & 3  &  1 & 10.22\% & 1.89\%  & 355  &  738   &    1435.73  &  2513.64  &  85   &   0\%   &   14.29\%\\\cline{2-12}
& \multirow{2}{*}{5}  &  1 & 18.83\% & 4.39\%  & 1018  &  1359   &    4711.68  &  6070.7  &  98   &   57.14\%   &   71.43\%\\
   &    &  3 & 9.97\% & 2.74\%  & 2267  &  1614   &    3960.22  &  4638.89  &  100   &   14.29\%   &   57.14\%\\\cline{2-12}
&\multirow{3}{*}{8}  &  1 & 27.32\% & 8.24\%  & 886  &  1543   &    5259.53  &   $>$7200  &  100   &   71.43\%   &   100\%\\
   &    &  3 & 18.26\% & 6.22\%  & 3074  &  1971   &     $>$7200  &  $>$7200  &  100   &   100\%   &   100\%\\
   &    &  5 & 12.7\% & 6.05\%  & 4108  &  2450   &    $>$7200  &   $>$7200  &  100   &   100\%   &   100\%\\\hline
      \end{tabular}
\caption{Average Results for the AP dataset using \eqref{thlpu1}.\label{ap0}}
\end{table}}

The first observation that comes after running the experiments is that solving the THLPU is not an easy task. Even for a small number of nodes, the problems were very time consuming. Actually, for the CAB dataset 18.5\% (for \eqref{thlpu1}) and 19.6\% (for \eqref{thlpu1}+VI) of the instances were not optimally solved within the time limit, while for the AP dataset the percentages of unsolved instances are 31.7\% and 37.3\% respectively. Note that, although according to \cite{contreras}, the set of valid inequalities \eqref{separadas1} has a good performance when applied to the THLP, the gain obtained applying them to the THLPU is only partial: while the LP gaps are significatively smaller when a subset of \eqref{separadas1} is incorporated to the model (an average difference of 19.66\% in the CAB dataset and 14.20\% for the AP dataset when comparing the two strategies), it does not always reduce the CPU times needed by Gurobi to solve the MILP problem. Actually, in some cases the LP gap is smaller than $1\%$, but still the solver takes a long time to check optimality of the solution. In fact, only $6$ of instances of CAB and $4$ of AP dataset were optimally solved adding the valid inequalites but not without them. Indeed, $3$ instances of CAB and $7$ of AP were not solved adding the valid inequalities, but they were solved without including them. Concerning the CPU times, in only $55\%$ of the CAB instances and $60\%$ of the AP instances, the CPU times needed to solve the problems with the valid inequalities are smaller than not using them.

Thus, based on the results, the set of inequalities \eqref{separadas1} (in its simplest form \eqref{sep1}) highly strengthen the MILP formulation \eqref{thlpu1} for the Tree of Hub Location problem with Upgrading, but such a strengthen is not reflected in the CPU times needed for solving the problems. In the next section we provide a different MILP formulation for  the problem, in order to check whether better results can be obtained.

\section{A disaggregated model for THLPU}\label{sec:4}

In this section a different idea is exploited to model the THLPU, by using some of the variables previously used in \eqref{thlpu1}, but disaggregating the flow variables into  
some others that allow us to represent the different types of costs of traversing hub nodes. In particular, we will use the $z$ and $t$ variables in this model, in the same manner they were defined for \eqref{thlpu1} (see \eqref{zvars} and \eqref{tvars}).

Additional binary variables associated to edges between two hubs are used, instead of $r$ and $s$ as for \eqref{thlpu1}, to define the small tree and also to keep track of the type of discount to be applied to the flow. We define three different families of binary variables which are closely related to the $s$-variables in \eqref{thlpu1}, but in which we distinguish 
 between the type of hub nodes that the extremes of the edges in the small tree are. For $\{k,m\} \in E$ with $k\neq m$, we denote:

 $$
y_{km}= \left\{\begin{array}{cl}
1 & \mbox{if $k$ and $m$ are adjacent but non-upgraded hubs,}\\
0 &\mbox{otherwise,}
\end{array}\right.
$$
$$
y'_{km}= \left\{\begin{array}{cl}
1 & \mbox{if $k$ or $m$ are adjacent hubs, but only one is upgraded,}\\
0 &\mbox{otherwise,}
\end{array}\right.
$$
$$
y''_{km}= \left\{\begin{array}{cl}
1 & \mbox{if $k$ or $m$ are upgraded hubs, and they are adjacent,}\\
0 &\mbox{otherwise.}
\end{array}\right.
$$

Figure \ref{y} shows those variables in these three families taking value 1 according with 
the depicted small tree and chosen upgraded or not hubs.

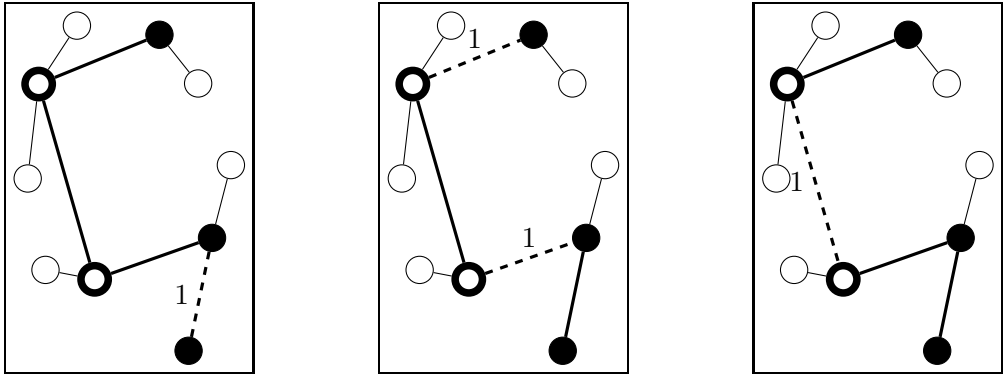
\begin{figure}[H]
\begin{center}
\begin{center}
\begin{subfigure}[b]{0.35\textwidth}
\fbox{\begin{tikzpicture}[scale=0.45]
\coordinate(X1) at (8.426952,0.357648);
\coordinate(X2) at (7.566704,9.700764);
\coordinate(X3) at (9.119774,3.694816);
\coordinate(X4) at (5.128467,9.970362);
\coordinate(X5) at (3.674127,5.450939);
\coordinate(X6) at (8.709690,8.260512);
\coordinate(X7) at (5.651874,2.469880);
\coordinate(X8) at (9.677091,5.847142);
\coordinate(X9) at (3.998395,8.242882);
\coordinate(X10) at (4.199842,2.748433);
\node[circle,draw,fill](X-1) at (X1) {};
\node[circle,draw,fill](X-2) at (X2) {};
\node[circle,draw,fill](X-3) at (X3) {};
\node[circle,draw](X-4) at (X4) {};
\node[circle,draw](X-5) at (X5) {};
\node[circle,draw](X-6) at (X6) {};
\node[circle,draw,line width=1mm](X-7) at (X7) {};
\node[circle,draw](X-8) at (X8) {};
\node[circle,draw,line width=1mm](X-9) at (X9) {};
\node[circle,draw](X-10) at (X10) {};

\draw[very thick] (X-9)--(X-7);
\draw[very thick] (X-9)--(X-2);
\draw[very thick] (X-7)--(X-3);
\draw[very thick,dashed] (X-1)--node [midway, above,left]{$1$}(X-3);

\draw (X-4) -- (X-9);
\draw (X-5) --  (X-9);
\draw (X-2) --  (X-6);
\draw (X-8) --  (X-3);
\draw (X-10) -- (X-7);

\end{tikzpicture}}
\end{subfigure}~\begin{subfigure}[b]{0.35\textwidth}
\fbox{\begin{tikzpicture}[scale=0.45]
\coordinate(X1) at (8.426952,0.357648);
\coordinate(X2) at (7.566704,9.700764);
\coordinate(X3) at (9.119774,3.694816);
\coordinate(X4) at (5.128467,9.970362);
\coordinate(X5) at (3.674127,5.450939);
\coordinate(X6) at (8.709690,8.260512);
\coordinate(X7) at (5.651874,2.469880);
\coordinate(X8) at (9.677091,5.847142);
\coordinate(X9) at (3.998395,8.242882);
\coordinate(X10) at (4.199842,2.748433);
\node[circle,draw,fill](X-1) at (X1) {};
\node[circle,draw,fill](X-2) at (X2) {};
\node[circle,draw,fill](X-3) at (X3) {};
\node[circle,draw](X-4) at (X4) {};
\node[circle,draw](X-5) at (X5) {};
\node[circle,draw](X-6) at (X6) {};
\node[circle,draw,line width=1mm](X-7) at (X7) {};
\node[circle,draw](X-8) at (X8) {};
\node[circle,draw,line width=1mm](X-9) at (X9) {};
\node[circle,draw](X-10) at (X10) {};

\draw[very thick] (X-9)--(X-7);
\draw[very thick,dashed] (X-9)--node [midway, above]{$1$}(X-2);
\draw[very thick,dashed] (X-7)--node [midway, above]{$1$}(X-3);
\draw[very thick] (X-1)--(X-3);

\draw (X-4) --  (X-9);
\draw (X-5) --   (X-9);
\draw (X-2) --   (X-6);
\draw (X-8) --   (X-3);
\draw (X-10) --  (X-7);

\end{tikzpicture}}
\end{subfigure}~\begin{subfigure}[b]{0.35\textwidth}
\fbox{\begin{tikzpicture}[scale=0.45]
\coordinate(X1) at (8.426952,0.357648);
\coordinate(X2) at (7.566704,9.700764);
\coordinate(X3) at (9.119774,3.694816);
\coordinate(X4) at (5.128467,9.970362);
\coordinate(X5) at (3.674127,5.450939);
\coordinate(X6) at (8.709690,8.260512);
\coordinate(X7) at (5.651874,2.469880);
\coordinate(X8) at (9.677091,5.847142);
\coordinate(X9) at (3.998395,8.242882);
\coordinate(X10) at (4.199842,2.748433);
\node[circle,draw,fill](X-1) at (X1) {};
\node[circle,draw,fill](X-2) at (X2) {};
\node[circle,draw,fill](X-3) at (X3) {};
\node[circle,draw](X-4) at (X4) {};
\node[circle,draw](X-5) at (X5) {};
\node[circle,draw](X-6) at (X6) {};
\node[circle,draw,line width=1mm](X-7) at (X7) {};
\node[circle,draw](X-8) at (X8) {};
\node[circle,draw,line width=1mm](X-9) at (X9) {};
\node[circle,draw](X-10) at (X10) {};

\draw[very thick,dashed] (X-9)--node [midway, above,left]{$1$} (X-7);
\draw[very thick] (X-9)--(X-2);
\draw[very thick] (X-7)--(X-3);
\draw[very thick] (X-1)--(X-3);

\draw (X-4) --  (X-9);
\draw (X-5) --   (X-9);
\draw (X-2) --   (X-6);
\draw (X-8) --   (X-3);
\draw (X-10) --  (X-7);

\end{tikzpicture}}
\end{subfigure}
\end{center}
\caption{Values for the $y$ (left), $y'$ (center) and $y''$ (right) variables when defining the small tree \label{y}}
\end{center}
\end{figure}

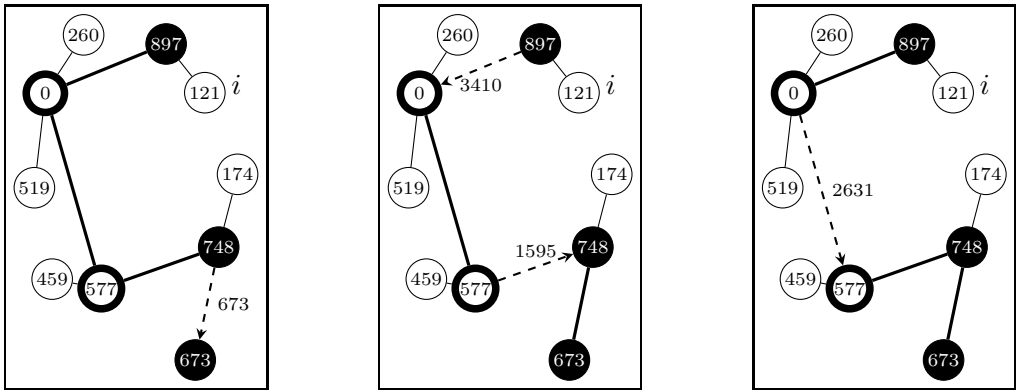
\begin{figure}[H]
\begin{center}
\begin{center}
\begin{subfigure}[b]{0.35\textwidth}
\fbox{\begin{tikzpicture}[scale=0.45]
\tikzstyle{arrow} = [thick,scale=3,->,>=stealth]
\coordinate(X1) at (8.426952,0.357648);
\coordinate(X2) at (7.566704,9.700764);
\coordinate(X3) at (9.119774,3.694816);
\coordinate(X4) at (5.128467,9.970362);
\coordinate(X5) at (3.674127,5.450939);
\coordinate(X6) at (8.709690,8.260512);
\coordinate(X7) at (5.651874,2.469880);
\coordinate(X8) at (9.677091,5.847142);
\coordinate(X9) at (3.998395,8.242882);
\coordinate(X10) at (4.199842,2.748433);
\node[circle,draw,fill,inner sep=1pt](X-1) at (X1) {\tiny \color{white}$673$};
\node[circle,draw,fill,inner sep=1pt](X-2) at (X2) {\tiny \color{white}$897$};
\node[circle,draw,fill,inner sep=1pt](X-3) at (X3) {\tiny \color{white}$748$};
\node[circle,draw,inner sep=1pt](X-4) at (X4) {\tiny $260$};
\node[circle,draw,inner sep=1pt](X-5) at (X5) {\tiny $519$};
\node[circle,draw,inner sep=1pt](X-6) at (X6) {\tiny $121$};
\node[circle,draw,line width=1mm,inner sep=1pt](X-7) at (X7) {\tiny $577$};
\node[circle,draw,inner sep=1pt](X-8) at (X8) {\tiny $174$};
\node[circle,draw,line width=1mm, inner sep=3pt](X-9) at (X9) {\tiny $0$};
\node[circle,draw,inner sep=1pt](X-10) at (X10) {\tiny $459$};

\node[above,right] at (9.2,8.5) {$i$};

\draw[very thick] (X-2) --  (X-9);
\draw[very thick,arrow,dashed] (X-3) -- node [right]{\tiny $673$} (X-1);
\draw[very thick] (X-7) -- (X-3);
\draw[very thick] (X-9) -- (X-7);

\draw (X-4)--(X-9);
\draw (X-5)--(X-9);
\draw (X-2)--(X-6);
\draw (X-8)--(X-3);
\draw (X-10)--(X-7);
\end{tikzpicture}}\end{subfigure}~\begin{subfigure}[b]{0.35\textwidth}
\fbox{\begin{tikzpicture}[scale=0.45]
\tikzstyle{arrow} = [thick,scale=3,->,>=stealth]
\coordinate(X1) at (8.426952,0.357648);
\coordinate(X2) at (7.566704,9.700764);
\coordinate(X3) at (9.119774,3.694816);
\coordinate(X4) at (5.128467,9.970362);
\coordinate(X5) at (3.674127,5.450939);
\coordinate(X6) at (8.709690,8.260512);
\coordinate(X7) at (5.651874,2.469880);
\coordinate(X8) at (9.677091,5.847142);
\coordinate(X9) at (3.998395,8.242882);
\coordinate(X10) at (4.199842,2.748433);
\node[circle,draw,fill,inner sep=1pt](X-1) at (X1) {\tiny \color{white}$673$};
\node[circle,draw,fill,inner sep=1pt](X-2) at (X2) {\tiny \color{white}$897$};
\node[circle,draw,fill,inner sep=1pt](X-3) at (X3) {\tiny \color{white}$748$};
\node[circle,draw,inner sep=1pt](X-4) at (X4) {\tiny $260$};
\node[circle,draw,inner sep=1pt](X-5) at (X5) {\tiny $519$};
\node[circle,draw,inner sep=1pt](X-6) at (X6) {\tiny $121$};
\node[circle,draw,line width=1mm,inner sep=1pt](X-7) at (X7) {\tiny $577$};
\node[circle,draw,inner sep=1pt](X-8) at (X8) {\tiny $174$};
\node[circle,draw,line width=1mm, inner sep=3pt](X-9) at (X9) {\tiny $0$};
\node[circle,draw,inner sep=1pt](X-10) at (X10) {\tiny $459$};

\node[above,right] at (9.2,8.5) {$i$};

\draw[very thick,arrow,dashed] (X-2) -- node [below]{\tiny $3410$} (X-9);
\draw[very thick] (X-3) --  (X-1);
\draw[very thick,arrow,dashed] (X-7) -- node [above]{\tiny $1595$} (X-3);
\draw[very thick] (X-9) --  (X-7);

\draw (X-4)--(X-9);
\draw (X-5)--(X-9);
\draw (X-2)--(X-6);
\draw (X-8)--(X-3);
\draw (X-10)--(X-7);
\end{tikzpicture}}\end{subfigure}~\begin{subfigure}[b]{0.35\textwidth}
\fbox{\begin{tikzpicture}[scale=0.45]
\tikzstyle{arrow} = [thick,scale=3,->,>=stealth]
\coordinate(X1) at (8.426952,0.357648);
\coordinate(X2) at (7.566704,9.700764);
\coordinate(X3) at (9.119774,3.694816);
\coordinate(X4) at (5.128467,9.970362);
\coordinate(X5) at (3.674127,5.450939);
\coordinate(X6) at (8.709690,8.260512);
\coordinate(X7) at (5.651874,2.469880);
\coordinate(X8) at (9.677091,5.847142);
\coordinate(X9) at (3.998395,8.242882);
\coordinate(X10) at (4.199842,2.748433);
\node[circle,draw,fill,inner sep=1pt](X-1) at (X1) {\tiny \color{white}$673$};
\node[circle,draw,fill,inner sep=1pt](X-2) at (X2) {\tiny \color{white}$897$};
\node[circle,draw,fill,inner sep=1pt](X-3) at (X3) {\tiny \color{white}$748$};
\node[circle,draw,inner sep=1pt](X-4) at (X4) {\tiny $260$};
\node[circle,draw,inner sep=1pt](X-5) at (X5) {\tiny $519$};
\node[circle,draw,inner sep=1pt](X-6) at (X6) {\tiny $121$};
\node[circle,draw,line width=1mm,inner sep=1pt](X-7) at (X7) {\tiny $577$};
\node[circle,draw,inner sep=1pt](X-8) at (X8) {\tiny $174$};
\node[circle,draw,line width=1mm, inner sep=3pt](X-9) at (X9) {\tiny $0$};
\node[circle,draw,inner sep=1pt](X-10) at (X10) {\tiny $459$};

\node[above,right] at (9.2,8.5) {$i$};

\draw[very thick] (X-2) --  (X-9);
\draw[very thick] (X-3) --  (X-1);
\draw[very thick] (X-7) -- (X-3);
\draw[very thick,arrow,dashed] (X-9) -- node [above,right]{\tiny $2631$} (X-7);

\draw (X-4)--(X-9);
\draw (X-5)--(X-9);
\draw (X-2)--(X-6);
\draw (X-8)--(X-3);
\draw (X-10)--(X-7);
\end{tikzpicture}}\end{subfigure}
\end{center}
\caption{Values for the $x$ (left), $x'$ (center) and $x''$ (right) variables associated with a given origin $i$.
The amount inside any node $j$ is the value of $w_{ij}$\label{x}}
\end{center}
\end{figure}

We also consider, instead of the flow $r$-variables in \eqref{thlpu1}, three new sets of variables which are constructed by splitting the flow traversing hub edges, by differentiating again between the three types of connections between hub nodes. For $\{k,m\}\in E$ with $k\neq m$, we define:

\begin{itemize}
\item $x_{ikm}$: amount of flow with origin in node $i$ which traverses arc $(k,m)$
if $k$ and $m$ are both hubs, but neither $k$ nor $m$ have been upgraded,
\item $x'_{ikm}$: amount of flow with origin in node $i$ which traverses arc $(k,m)$
if $k$ and $m$ are both hubs and $k$ or $m$ (only one of them) has been upgraded, 
\item $x''_{ikm}$: amount of flow with origin in node $i$ which traverses arc $(k,m)$
if $k$ and $m$ are both hubs and both $k$ and $m$ have been upgraded. 
\end{itemize}
Figure \ref{x} shows the values of these three families of variables (if they are not 0) 
for a fixed value of $i$.

With the above notation, we present now the disaggregated formulation for THLPU:

\begin{align}
 \min &\;\; \sum_{i\in N} \sum_{k=1:\atop k\neq i}^n (O_i d_{ik} + D_i d_{ki}) z_{ik} \label{thlpu2}\tag{${\rm DTHLPU}$}  \\ 
     & \hspace*{2cm}+  \sum_{i\in N} \sum_{k\in N} \sum_{m\in N:\atop m\neq k} 
     (c_{km} x_{ikm} + c'_{km} x'_{ikm} + c''_{km} x''_{ikm})\nonumber\\
\mbox{s.t. } 
&  \sum_{k\in N} z_{kk} = p,   \label{a} \\
&  \sum_{k=1}^{n-1} \sum_{m=k+1:\atop \{k,m\}\in E}^n (y_{km} + y'_{km} + y''_{km}) = p-1, \label{st} \\
&  \sum_{k\in N} z_{ik} = 1, \hspace{0.3cm} \forall i\in N,  \label{b} \\
&  y_{km} + y'_{km} + y''_{km} + z_{mk} \le z_{kk}, \hspace{0.3cm} \forall k<m\in N:\ \{k,m\}\in E, \label{ca} \\
&  y_{km} + y'_{km} + y''_{km} + z_{km} \le z_{mm}, \hspace{0.3cm} \forall k<m\in N:\ \{k,m\}\in E,\label{cb} \\ 
&  \sum_{k\in N} t_k = q   \label{d}, \\
&  t_k \le z_{kk}, \hspace{0.3cm}  \forall k\in N, \label{e} \\
&  y''_{km} \le t_k, \hspace{0.3cm} \forall \{k,m\}\in E, k<m, \label{f2} \\
&  y''_{km} \le t_m, \hspace{0.3cm} \forall \{k,m\}\in E, k<m, \label{f3} \\
&	 y'_{km} + y''_{km} \le t_k + t_m, \hspace{0.3cm} \forall \{k,m\}\in E, k<m, \label{f4}\\
&  x_{ikm} + x_{imk} \le O_{ikm} y_{km}, \hspace{0.3cm} \forall i\in N,  \forall \{k,m\}\in E, i\neq k<m\in N,  \label{g1a} \\
&  x_{iim}  \le  O_{ikm} y_{\min \{i,m\},\max\{ i,m\}}, \hspace{0.3cm} \forall \{i,m\}\in E, \label{g1b} \\
&  x'_{ikm} + x'_{imk} \le O_{ikm} y'_{km}, \hspace{0.3cm} \forall i\in N,  \forall \{k,m\}\in E, i\neq k<m\in N,  \label{g2a}
\end{align}
\begin{align}
&  x'_{iim}  \le O_{ikm} y'_{\min \{i,m\},\max\{ i,m\}}, \hspace{0.3cm} \forall \{i,m\}\in E, \label{g2b}\\
&  x''_{ikm} + x''_{imk} \le O_{ikm} y''_{km}, \hspace{0.3cm} \forall i\neq k<m\in N:\ \{k,m\}\in E, \label{g3a} \\
&  x''_{iim}  \le O_{ikm} y''_{\min \{i,m\},\max\{ i,m\}} \hspace{0.3cm}, \forall \{i,m\}\in E, \label{g3b}\\
&  O_i z_{ik} + \sum_{m=1:\atop m\neq k}^n\!(x_{imk} + x'_{imk} + x''_{imk}) =&\nonumber\\ 
& \hspace*{3.5cm} \sum_{m=1:\atop m\neq k}^n \! (x_{ikm} + x'_{ikm} + x''_{ikm}) + \sum_{j\in N} \! w_{ij} z_{jk},  \forall i,k\in N, \label{h}\\
&  z_{ik}, t_k\in \{0,1\} ,\hspace{0.3cm} \forall i, k\in N, \nonumber \\
&  y_{km},y'_{km},y''_{km}\in \{0,1\}, \hspace{0.3cm} \forall k<m\in N:\ \{k,m\}\in E, \nonumber \\
&  x_{km},x'_{km},x''_{km}\ge 0, \hspace{0.3cm} \forall k\neq m\in N.\nonumber
\end{align}

In the objective function of \eqref{thlpu2} the discounts are applied to the flow between hubs  given by the $x$-, $x'$- and $x''$-variables, and the total cost of the flow with origin and destination  in each non-hub node $i$ is added up without any discount. 

Constraints \eqref{a} and \eqref{st} fix the number of hubs and edges in the small tree.  These two constraints, plus the connection, forced by the flows between nodes, ensure that  the resulting structure will be a tree.

Constraints \eqref{b}, \eqref{ca} and \eqref{cb} guarantee that each non-hub node 
is allocated to a hub. Constraints \eqref{ca} and \eqref{cb} also fix to zero 
$y$-, $y'$- and $y''$-variables when one or two of their extremes are not hubs. 

Regarding the $t$-variables, constraints \eqref{d} establish in $q$ the number of upgraded 
nodes, whereas \eqref{e} ensure that the upgraded nodes will be hubs. Once the upgrading is 
known, \eqref{f2}-\eqref{f4} fix to zero $y$-, $y'$- and $y''$-variables when the extremes of 
the edge have not the adequate upgrading. Similarly, constraints \eqref{g1a}-\eqref{g3b} fix
to zero the $x$-, $x'$- and $x''$-variables when appropriate, in this case using an 
upper bound $O_{ikm}$ since they are continuous variables.

Observe that given a feasible solution of \eqref{thlpu1} one can easily construct a feasible solution of \eqref{thlpu2}, and vice versa. In particular, given feasible values of \eqref{thlpu2}, for $y_{km}$, $y'_{km}$, $y''_{km}$, $x_{ikm}$, $x'_{ikm}$ and $x''_{ikm}$, for $i, k, m \in N$, one can define
\begin{align*}
\bar s_{\min\{k,m\} \max\{k,m\}} &= y_{\min\{k,m\} \max\{k,m\}}+y'_{\min\{k,m\} \max\{k,m\}}+y''_{\min\{k,m\} \max\{k,m\}},\\
\bar r_{ikm} &= x_{ikm}+x'_{ikm}+x''_{ikm},\\
\bar \theta_{i\min\{k,m\} \max\{k,m\}} &= c_{km} (x_{ikm}+x_{imk}) +c'_{km} (x'_{ikm}+x'_{imk})+c''_{km} (x''_{ikm}+x''_{imk}),
\end{align*}
such that $(\bar s, \bar r, \bar \theta)$, together with the $z$ and $t$ values, is a feasible solution to \eqref{thlpu1}. In particular, one can see that \eqref{h} is nothing but the same flow conservation of flow constraint \eqref{h1} of \eqref{thlpu1}.

Several families of valid inequalities can be added to formulation \eqref{thlpu2} in order to 
reduce the size of the polyhedron associated to the linear relaxation, so improving the 
lower bounds it produces and reducing the computational times. 

\begin{lemma}\label{lem2}
The following inequalities are valid for \eqref{thlpu2}:
\begin{enumerate}
\item[\rm 1)] $x_{ikm} + x_{imk} + x'_{ikm} + x'_{imk} + x''_{ikm} + x''_{imk} \le O_{ikm} z_{kk}$,  $\forall i\neq k<m\in N:\ \{k,m\}\in E$,
\item[\rm 2)] $x_{ikm} + x_{imk} + x'_{ikm} + x'_{imk} + x''_{ikm} + x''_{imk} \le  O_{ikm} z_{mm}$, $\forall i\neq k<m\in N:\ \{k,m\}\in E$,
\item[\rm 3)] $x_{iim} + x'_{iim} + x''_{iim} \le O_{ikm} z_{ii}$, $\forall m\neq i\in N$,
\item[\rm 4)] $x_{iim} + x'_{iim} + x''_{iim} \le O_{ikm} z_{mm}$, $\forall m\neq i\in N$,
\item[\rm 5)] $y_{km} + y'_{km} + y''_{km} + z_{km} + z_{mk} \le 1$, $\forall k<m\in N:\ \{k,m\}\in E$.
\end{enumerate}
\end{lemma}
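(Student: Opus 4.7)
The plan is to derive all five inequalities by aggregating the variable-bounding constraints \eqref{g1a}--\eqref{g3b} with the hub-linking constraints \eqref{ca}--\eqref{cb} and the single-allocation constraint \eqref{b}. No new ideas beyond combining existing model inequalities should be needed; the task is essentially bookkeeping.

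For items (1) and (2), fix $i\neq k<m\in N$ with $\{k,m\}\in E$. I would sum constraints \eqref{g1a}, \eqref{g2a} and \eqref{g3a} for this triple to obtain
\begin{equation*}
(x_{ikm}+x_{imk}) + (x'_{ikm}+x'_{imk}) + (x''_{ikm}+x''_{imk}) \le O_{ikm}\bigl(y_{km}+y'_{km}+y''_{km}\bigr).
\end{equation*}
Then \eqref{ca} yields $y_{km}+y'_{km}+y''_{km} \le z_{kk}-z_{mk} \le z_{kk}$, and \eqref{cb} yields the analogous bound by $z_{mm}$. Multiplying by the non-negative coefficient $O_{ikm}$ and chaining gives (1) and (2).

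For items (3) and (4), consider the case $i=k$. Adding \eqref{g1b}, \eqref{g2b} and \eqref{g3b} produces
\begin{equation*}
x_{iim}+x'_{iim}+x''_{iim} \le O_{iim}\bigl(y_{\min\{i,m\},\max\{i,m\}}+y'_{\min\{i,m\},\max\{i,m\}}+y''_{\min\{i,m\},\max\{i,m\}}\bigr).
\end{equation*}
I then split on the two cases $i<m$ and $i>m$. In each case, one application of \eqref{ca} bounds the three $y$-variables by $z_{ii}$ (giving~(3)) and one application of \eqref{cb} bounds them by $z_{mm}$ (giving~(4)); the two cases just swap which of \eqref{ca}, \eqref{cb} produces which bound. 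The only mildly subtle point is tracking the $\min/\max$ indexing, but no real work is involved.

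For item (5), I would use \eqref{ca}, which reads $y_{km}+y'_{km}+y''_{km}+z_{mk} \le z_{kk}$, and add $z_{km}$ to both sides. The assignment constraint \eqref{b} for the row index $k$ gives $\sum_{j\in N} z_{kj}=1$ and hence $z_{kk}+z_{km}\le 1$ (both terms appear distinctly in the sum because $k\neq m$), so the right-hand side is at most $1$ and the inequality follows. The main obstacle here is being careful that we can legally add $z_{km}$ on both sides and invoke \eqref{b} in that precise way; this is routine once we remember that each node, including $k$, is allocated to exactly one hub and that $z_{kk}$ and $z_{km}$ are two of the binary summands in that assignment row.
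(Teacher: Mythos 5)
Your proof is correct, but it proceeds quite differently from the paper's. The paper argues \emph{semantically}: for items 1)--4) it observes that in any feasible (integer) solution, if $z_{kk}=0$ or $z_{mm}=0$ the edge $\{k,m\}$ cannot carry inter-hub flow, and otherwise $O_{ikm}$ bounds the flow with origin $i$ across that edge; for item 5) it performs a case analysis on whether $\{k,m\}$ belongs to the small tree, is a spoke edge of the large tree, or is absent, noting that at most one of the two groups of variables can equal $1$. You instead give a purely \emph{syntactic} derivation, obtaining each inequality as a nonnegative linear combination of the model's own constraints: \eqref{g1a}--\eqref{g3b} summed and chained with \eqref{ca}--\eqref{cb} for items 1)--4), and \eqref{ca} plus the assignment row \eqref{b} for item 5). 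Your route buys something strictly stronger than the lemma claims: it shows the five inequalities hold at \emph{every} point of the linear relaxation of \eqref{thlpu2}, not merely at integer feasible points, so they are in fact redundant for the LP polyhedron (which slightly undercuts the paper's motivation of ``reducing the size of the polyhedron,'' though it does confirm validity). The paper's argument, by contrast, only certifies validity over integer solutions but conveys the combinatorial meaning of each inequality. One small point to tidy up: constraints \eqref{g1b}--\eqref{g3b} and items 3)--4) of the lemma carry a dangling index $k$ in $O_{ikm}$; your substitution of $O_{iim}$ in the intermediate display is the intended reading, but you should state that identification explicitly rather than silently switching between $O_{iim}$ and $O_{ikm}$.
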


\begin{proof}

Observe that the inequalities in 1) and 2) assert that when $z_{kk}=0$ or $z_{mm}=0$, no reduction can be applied to the flow 
traversing edge $\{k,m\}$ and then the corresponding $x$-, $x'$- and $x''$-variables will take 
value $0$. Otherwise, the total amount of flow with origin in $i\neq k$ traversing $\{k,m\}$ 
in any direction will be bounded above by $O_{ikm}$.

The particular case of 1) and 2) when $i=k$ results in 3) and 4) .

Finally, 5) follows from the same construction as \eqref{ca} and \eqref{cb}. Given two nodes $k$ and $m$ in $N$, if $\{k,m\}$ is part of the large tree, only one of the following  situations may occur: i) $\{k,m\}$ is also part of the small tree (in whose case, $y_{km} + y'_{km} + y''_{km}=1$); or ii) one of $k$ and $m$ is a non-hub and the other is a hub and they are adjacent (in whose case $z_{km} + z_{mk}=1$). The inequality comes for the case in which $\{k,m\}$ is not an edge in the large tree ($k$ and $m$ are not adjacent).
 
\end{proof}

\subsection{A family of valid inequalities for \eqref{thlpu2}}

By the equivalence between the $s$ variables in \eqref{thlpu1} and the $(y,y',y'')$-variables in \eqref{thlpu2}, and also between the $r$-variables and the $(x,x',x'')$-variables, the set of valid inequalities \eqref{separadas1} can be adapted to \eqref{thlpu2}. In particular, they read:

\begin{eqnarray}
\Big(\sum_{j\in J\cup\{m\}} w_{ij}\Big) 
\Big(
\sum_{k=1:\atop {\{k,m\}\in E}}^{m-1} \!\!\!\!\left(y_{km}+y'_{km}+y''_{km}\right) + 
\sum_{k=m+1:\atop {\{k,m\}\in E}}^n \!\!\!\!\left(y_{mk}+y'_{mk}+y''_{mk}\right) \Big)+ 
\nonumber \\
\sum_{k\notin F\atop \{k,m\}\in E} \!\!\!\! \left(x_{ikm}+x'_{ikm}+x''_{ikm}\right) \geq \sum_{j\in J\cup\{m\}} \!\!\!\!w_{ij}(z_{jm}-z_{im})  \label{separadas2}
\end{eqnarray}
for all $i,m\in N,\ F \subseteq N\setminus\{m\},\ J\subseteq N\setminus \{ i,m \}$.

Since the set of variables of \eqref{thlpu2} and the valid inequalities \eqref{separadas2} differ from those in \cite{contreras}, in what follows, we explicitly describe a separation procedure for \eqref{separadas2}. 

Let $i,m\in N$, $Q\ge 0$ and $(\bar{x},\bar{y},\bar{z},\bar{x}',\bar{y}',\bar{x}'',\bar{y}'')$ 
be a feasible fractional solution of \eqref{thlpu2}. Then, an optimal solution, $\overline{F}$, that solves the following optimization problem

\begin{eqnarray*}
\mathrm{L(Q)} = \min_{F \subseteq N\backslash\{m\}} 
 \sum_{k\notin F:\atop \{k,m\}\in E} (\bar{x}_{ikm}+\bar{x}'_{ikm}+\bar{x}''_{ikm}) + 
\\
Q \Big( \sum_{k=1:\atop {\{k,m\}\in E}}^{m-1} (\bar{y}_{km} +\bar{y}'_{km}+\bar{y}''_{km}) + 
\sum_{k=m+1:\atop {\{k,m\}\in E}}^n (\bar{y}_{mk} +\bar{y}'_{mk}+\bar{y}''_{mk}
\Big)
\end{eqnarray*}

is given by 
$$\overline{F}= 
\{k<m:\ \{k,m\}\in E,\ {\bar{x}_{ikm}+\bar{x}'_{ikm}+\bar{x}''_{ikm}\over
\bar{y}_{km} +\bar{y}'_{km}+\bar{y}''_{km}  }\ge Q\}
\cup $$
$$\{k>m:\ \{k,m\}\in E,\ {\bar{x}_{ikm}+\bar{x}'_{ikm}+\bar{x}''_{ikm}\over
\bar{y}_{mk}+\bar{y}'_{mk}+\bar{y}''_{mk}   }\ge Q\}.$$
Observe that the function $\mathrm{L}$ is a piecewise linear function on the values of $Q\geq 0$, hence $\mathrm{L}$ has the following shape:
$$
\mathrm{L}(Q) = \left\{\begin{array}{cl}
a_1+ Q b_1 & \mbox{if $Q \in [A_1, A_{2}]$,}\\
\vdots & \vdots\\
a_R+ Q b_R & \mbox{if $Q \in [A_R, A_{R+1}]$.}
\end{array}\right. \quad \forall Q \geq 0.
$$

Now, the possible values of interest of $Q$ for fixed values of $i$ and $m$ are given by the 
different choices of the set $J\subset N\setminus \{ i,m\}$. Let $e=(e_1,e_2,\ldots ,e_n)\in \{ 0,1\}^n$ 
be the incidence vector of set $J$, producing a value of $Q_e:=\sum_{j\in N} e_j w_{ij}$. 
The best option for $e$ can be obtained by solving the auxiliary problem 
\begin{eqnarray}
 \min & & L(Q_e) - \sum_{j\in N} w_{ij}(\bar{z}_{jm}-\bar{z}_{im})e_j \nonumber\\
 \hbox{s.t. } & &e_m=1   \nonumber \\
              & & e_j \in \{ 0,1 \}\ \ \forall j\in N. \nonumber
\end{eqnarray}
The maximally violated inequality will be generated when the optimum of this problem is negative. For the computational experiments, as for formulation \eqref{thlpu1}, a simpler subfamily of valid inequalities of \eqref{separadas2} was considered. The inequalities and the separation procedure read exactly as those for \eqref{sep1}, by using the identification between the $r$ and the $x, x'$, and $x'''$ variables and between $s$ and the $y$, $y'$, and $y''$ variables.

As detailed for the valid inequalities and the separation procedure for \eqref{thlpu1} (see Section \ref{sec:3}), in our computational experiments we search and incorporate only those in which the sets $J$ and $F$ are singletons, because its simplicity and its relative gain in terms of strength and consumed CPU time.

\section{Computational Experiments}
\label{sec:6}
In this section we report the results of a series of computational experiments performed for solving the THLPU when using the disagregated formulation \eqref{thlpu2} and adding the family of valid inequalities \eqref{separadas2} by using our separation strategy. We use the same datasets used in Subsection \ref{ss:exp0}, and the same notation for the results. In tables \ref{cab1} and \ref{ap1} we report the average results when using \eqref{thlpu2} to solve the THLP for the CAP and AP datasets, respectively. In figures \ref{fig25a}, \ref{fig25b} and \ref{fig25c} we show some solutions obtained during our experiments for one of the instances of CAB and AP datasets.

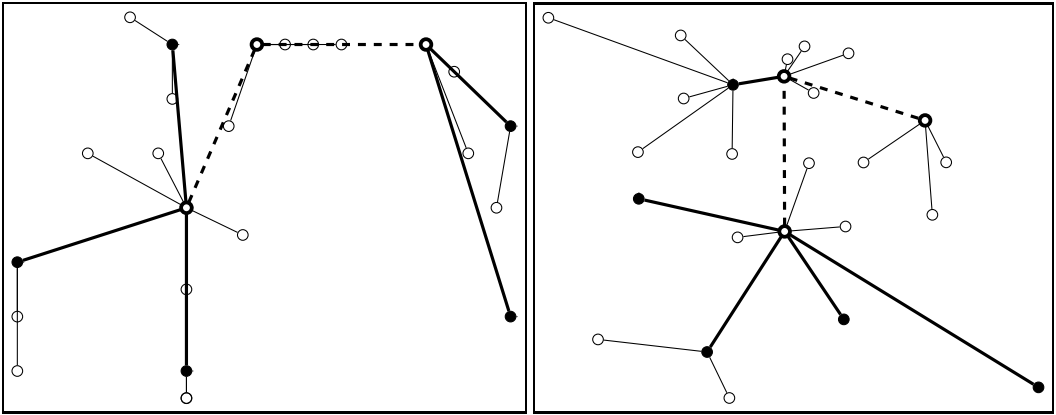
\begin{figure}
\begin{center}
\fbox{\begin{tikzpicture}[xscale=0.13, yscale=0.25]

\coordinate(X1) at (24.497000,0.000000);
\coordinate(X2) at (24.497000,0.010000);
\coordinate(X3) at (7.205000,1.448000);
\coordinate(X4) at (24.497000,1.448000);
\coordinate(X5) at (7.205000,4.344000);
\coordinate(X6) at (57.640000,4.344000);
\coordinate(X7) at (24.497000,5.792000);
\coordinate(X8) at (7.205000,7.240000);
\coordinate(X9) at (30.261000,8.688000);
\coordinate(X10) at (24.497000,10.136000);
\coordinate(X11) at (56.199000,10.136000);
\coordinate(X12) at (14.410000,13.032000);
\coordinate(X13) at (21.615000,13.032000);
\coordinate(X14) at (53.317000,13.032000);
\coordinate(X15) at (28.820000,14.480000);
\coordinate(X16) at (57.640000,14.480000);
\coordinate(X17) at (23.056000,15.928000);
\coordinate(X18) at (51.876000,17.376000);
\coordinate(X19) at (23.056000,18.824000);
\coordinate(X20) at (31.702000,18.824000);
\coordinate(X21) at (34.584000,18.824000);
\coordinate(X22) at (37.466000,18.824000);
\coordinate(X23) at (40.348000,18.824000);
\coordinate(X24) at (48.994000,18.824000);
\coordinate(X25) at (18.733000,20.272000);
\node[circle,draw,inner sep=0.5mm, inner sep=0.5mm](X-1) at (X1) {};
\node[circle,draw,inner sep=0.5mm, inner sep=0.5mm](X-2) at (X2) {};
\node[circle,draw,inner sep=0.5mm, inner sep=0.5mm](X-3) at (X3) {};
\node[circle,draw,inner sep=0.5mm,fill, inner sep=0.5mm](X-4) at (X4) {};
\node[circle,draw,inner sep=0.5mm, inner sep=0.5mm](X-5) at (X5) {};
\node[circle,draw,inner sep=0.5mm,fill, inner sep=0.5mm](X-6) at (X6) {};
\node[circle,draw,inner sep=0.5mm, inner sep=0.5mm](X-7) at (X7) {};
\node[circle,draw,inner sep=0.5mm,fill, inner sep=0.5mm](X-8) at (X8) {};
\node[circle,draw,inner sep=0.5mm, inner sep=0.5mm](X-9) at (X9) {};
\node[circle,draw,inner sep=0.5mm,line width=0.5mm, inner sep=0.5mm](X-10) at (X10) {};
\node[circle,draw,inner sep=0.5mm, inner sep=0.5mm](X-10) at (X10) {};
\node[circle,draw,inner sep=0.5mm, inner sep=0.5mm](X-11) at (X11) {};
\node[circle,draw,inner sep=0.5mm, inner sep=0.5mm](X-12) at (X12) {};
\node[circle,draw,inner sep=0.5mm, inner sep=0.5mm](X-13) at (X13) {};
\node[circle,draw,inner sep=0.5mm, inner sep=0.5mm](X-14) at (X14) {};
\node[circle,draw,inner sep=0.5mm, inner sep=0.5mm](X-15) at (X15) {};
\node[circle,draw,inner sep=0.5mm,fill, inner sep=0.5mm](X-16) at (X16) {};
\node[circle,draw,inner sep=0.5mm, inner sep=0.5mm](X-17) at (X17) {};
\node[circle,draw,inner sep=0.5mm, inner sep=0.5mm](X-18) at (X18) {};
\node[circle,draw,inner sep=0.5mm,fill, inner sep=0.5mm](X-19) at (X19) {};
\node[circle,draw,inner sep=0.5mm,line width=0.5mm,inner sep=0.5mm](X-20) at (X20) {};
\node[circle,draw,inner sep=0.5mm, inner sep=0.5mm](X-20) at (X20) {};
\node[circle,draw,inner sep=0.5mm, inner sep=0.5mm](X-21) at (X21) {};
\node[circle,draw,inner sep=0.5mm, inner sep=0.5mm](X-22) at (X22) {};
\node[circle,draw,inner sep=0.5mm, inner sep=0.5mm](X-23) at (X23) {};
\node[circle,draw,inner sep=0.5mm,line width=0.5mm, inner sep=0.5mm](X-24) at (X24) {};
\node[circle,draw,inner sep=0.5mm, inner sep=0.5mm](X-24) at (X24) {};
\node[circle,draw,inner sep=0.5mm, inner sep=0.5mm](X-25) at (X25) {};
\draw (X-1) --  (X-4);
\draw (X-2) --  (X-4);
\draw (X-3) --  (X-8);
\draw (X-4) --  (X-4);
\draw[very thick] (X-4) --  (X-10);
\draw (X-5) --  (X-8);
\draw (X-6) --  (X-6);
\draw[very thick] (X-6) --  (X-24);
\draw (X-7) --  (X-10);
\draw (X-8) --  (X-8);
\draw[very thick] (X-8) --  (X-10);
\draw (X-9) --  (X-10);
\draw (X-10) --  (X-10);
\draw[very thick] (X-10) --  (X-19);
\draw[very thick,dashed] (X-10) --  (X-20);
\draw (X-11) --  (X-16);
\draw (X-12) --  (X-10);
\draw (X-13) --  (X-10);
\draw (X-14) --  (X-24);
\draw (X-15) --  (X-20);
\draw (X-16) --  (X-16);
\draw[very thick] (X-16) --  (X-24);
\draw (X-17) --  (X-19);
\draw (X-18) --  (X-24);
\draw (X-19) --  (X-19);
\draw (X-20) --  (X-20);
\draw[very thick,dashed] (X-20) --  (X-24);
\draw (X-21) --  (X-20);
\draw (X-22) --  (X-20);
\draw (X-23) --  (X-20);
\draw (X-24) --  (X-24);
\draw (X-25) --  (X-19);
\end{tikzpicture}}~\fbox{\begin{tikzpicture}[xscale=0.14, yscale=0.126]

\coordinate(X1) at (12.636459,19.644937);
\coordinate(X2) at (22.994535,18.316494);
\coordinate(X3) at (25.109379,13.461034);
\coordinate(X4) at (35.977228,21.761232);
\coordinate(X5) at (54.465362,14.583895);
\coordinate(X6) at (16.506582,34.491875);
\coordinate(X7) at (25.889015,30.419914);
\coordinate(X8) at (30.357654,31.045623);
\coordinate(X9) at (36.139999,31.567076);
\coordinate(X10) at (44.386053,32.801278);
\coordinate(X11) at (16.426283,39.421336);
\coordinate(X12) at (25.369507,39.226369);
\coordinate(X13) at (32.669659,38.251497);
\coordinate(X14) at (37.849578,38.326664);
\coordinate(X15) at (45.698532,38.341098);
\coordinate(X16) at (20.768868,45.081384);
\coordinate(X17) at (25.455421,46.525809);
\coordinate(X18) at (30.305016,47.419112);
\coordinate(X19) at (33.111914,45.662965);
\coordinate(X20) at (43.694252,42.765665);
\coordinate(X21) at (7.921644,53.604716);
\coordinate(X22) at (20.487772,51.738539);
\coordinate(X23) at (30.630363,49.230520);
\coordinate(X24) at (32.246728,50.584340);
\coordinate(X25) at (36.425404,49.852413);
\node[circle,draw,inner sep=0.5mm](X-1) at (X1) {};
\node[circle,draw,inner sep=0.5mm,fill](X-2) at (X2) {};
\node[circle,draw,inner sep=0.5mm](X-3) at (X3) {};
\node[circle,draw,inner sep=0.5mm,fill](X-4) at (X4) {};
\node[circle,draw,inner sep=0.5mm,fill](X-5) at (X5) {};
\node[circle,draw,inner sep=0.5mm,fill](X-6) at (X6) {};
\node[circle,draw,inner sep=0.5mm](X-7) at (X7) {};
\node[circle,draw,inner sep=0.5mm,line width=0.5mm](X-8) at (X8) {};
\node[circle,draw,inner sep=0.5mm](X-8) at (X8) {};
\node[circle,draw,inner sep=0.5mm](X-9) at (X9) {};
\node[circle,draw,inner sep=0.5mm](X-10) at (X10) {};
\node[circle,draw,inner sep=0.5mm](X-11) at (X11) {};
\node[circle,draw,inner sep=0.5mm](X-12) at (X12) {};
\node[circle,draw,inner sep=0.5mm](X-13) at (X13) {};
\node[circle,draw,inner sep=0.5mm](X-14) at (X14) {};
\node[circle,draw,inner sep=0.5mm](X-15) at (X15) {};
\node[circle,draw,inner sep=0.5mm](X-16) at (X16) {};
\node[circle,draw,inner sep=0.5mm,fill](X-17) at (X17) {};
\node[circle,draw,inner sep=0.5mm,line width=0.5mm](X-18) at (X18) {};
\node[circle,draw,inner sep=0.5mm](X-18) at (X18) {};
\node[circle,draw,inner sep=0.5mm](X-19) at (X19) {};
\node[circle,draw,inner sep=0.5mm,line width=0.5mm](X-20) at (X20) {};
\node[circle,draw,inner sep=0.5mm](X-20) at (X20) {};
\node[circle,draw,inner sep=0.5mm](X-21) at (X21) {};
\node[circle,draw,inner sep=0.5mm](X-22) at (X22) {};
\node[circle,draw,inner sep=0.5mm](X-23) at (X23) {};
\node[circle,draw,inner sep=0.5mm](X-24) at (X24) {};
\node[circle,draw,inner sep=0.5mm](X-25) at (X25) {};
\draw (X-1) --  (X-2);
\draw (X-2) --  (X-2);
\draw[very thick] (X-2) --  (X-8);
\draw (X-3) --  (X-2);
\draw (X-4) --  (X-4);
\draw[very thick] (X-4) --  (X-8);
\draw (X-5) --  (X-5);
\draw[very thick] (X-5) --  (X-8);
\draw (X-6) --  (X-6);
\draw[very thick] (X-6) --  (X-8);
\draw (X-7) --  (X-8);
\draw (X-8) --  (X-8);
\draw[very thick,dashed] (X-8) --  (X-18);
\draw (X-9) --  (X-8);
\draw (X-10) --  (X-20);
\draw (X-11) --  (X-17);
\draw (X-12) --  (X-17);
\draw (X-13) --  (X-8);
\draw (X-14) --  (X-20);
\draw (X-15) --  (X-20);
\draw (X-16) --  (X-17);
\draw (X-17) --  (X-17);
\draw[very thick] (X-17) --  (X-18);
\draw (X-18) --  (X-18);
\draw[very thick,dashed] (X-18) --  (X-20);
\draw (X-19) --  (X-18);
\draw (X-20) --  (X-20);
\draw (X-21) --  (X-17);
\draw (X-22) --  (X-17);
\draw (X-23) --  (X-18);
\draw (X-24) --  (X-18);
\draw (X-25) --  (X-18);
\end{tikzpicture}}
\caption{Solutions of one of the instances of CAB (left) and AP (right) datasets with $(n,p,q)=(25,8,3)$ and $(\alpha, \rho, \gamma)=(0.8,0.5,0.2)$.\label{fig25a}}
\end{center}
\end{figure}

{\small
\begin{table}\label{t:gaps}
\centering\begin{tabular}{|c|c|c||cc|rr|rr|c|cc|}\hline
$n$ & $p$ & $q$ &{\sc GAP} & {\sc GAP}$_{VI}$ & {\sc Nodes} & {\sc Nodes}$_{VI}$ & {\sc Time} & {\sc Time}$_{VI}$ & {\sc Cuts} & {\sc UnS} & {\sc UnS}$_{VI}$\\\hline\hline
\multirow{6}{*}{10} & 3  &  1 & 1.97\% & 0.89\%  & 43  &  10   &    0.23  &  0.18  &  32.86   &   0\%   &   0\%\\\cline{2-12}
& \multirow{2}{*}{5}  &  1 & 7.57\% & 2.63\%  & 502  &  142   &    0.57  &  0.4  &  52.43   &   0\%   &   0\%\\
   &    &  3 & 18.08\% & 1.77\%  & 628  &  1103   &    0.48  &  0.6  &  65.14   &   0\%   &   0\%\\\cline{2-12}
&\multirow{3}{*}{8}  &  1 & 15.6\% & 7.96\%  & 13900  &  17452   &    6.25  &  5.66  &  76.71   &   0\%   &   0\%\\
   &    &  3 & 31.94\% & 4.72\%  & 12722  &  34866   &    7.29  &  7.24  &  78.57   &   0\%   &   0\%\\
   &    &  5 & 22.26\% & 7.91\%  & 113634  &  43225   &    56.76  &  8.84  &  91.14   &   0\%   &   0\%\\\hline
\multirow{6}{*}{15} & 3  &  1 & 0.66\% & 0.57\%  & 46  &  8   &    0.8  &  0.43  &  48.71   &   0\%   &   0\%\\\cline{2-12}
& \multirow{2}{*}{5}  &  1 & 6.11\% & 2.46\%  & 1342  &  678   &    6.34  &  1.57  &  59.43   &   0\%   &   0\%\\
   &    &  3 & 14.19\% & 1.81\%  & 762  &  4849   &    2.64  &  2.18  &  77.29   &   0\%   &   0\%\\\cline{2-12}
&\multirow{3}{*}{8}  &  1 & 14.37\% & 6.37\%  & 56108  &  39439   &    189.47  &  84.33  &  95.86   &   0\%   &   0\%\\
   &    &  3 & 29.22\% & 4.6\%  & 26091  &  6380   &    79.02  &  21.98  &  103   &   0\%   &   0\%\\
   &    &  5 & 21.75\% & 7.06\%  & 285057  &  20780   &    1193.56  &  59.01  &  103.14   &   14.29\%   &   0\%\\\hline
\multirow{6}{*}{20} & 3  &  1 & 2.54\% & 1.09\%  & 54  &  16   &    3.73  &  2.86  &  53.71   &   0\%   &   0\%\\\cline{2-12}
& \multirow{2}{*}{5}  &  1 & 10.56\% & 4.18\%  & 3024  &  1388   &    58  &  51.06  &  86   &   0\%   &   0\%\\
   &    &  3 & 16.6\% & 3.47\%  & 1990  &  3064   &    34.72  &  26.21  &  103   &   0\%   &   0\%\\\cline{2-12}
&\multirow{3}{*}{8}  &  1 & 15.75\% & 6.66\%  & 89633  &  18512   &    2409.97  &  2326.28  &  95.29   &   28.57\%   &   28.57\%\\
   &    &  3 & 27.96\% & 7.16\%  & 53712  &  15891   &    1479.62  &  917.33  &  100   &   14.29\%   &   0\%\\
   &    &  5 & 19.44\% & 8\%  & 159421  &  23355   &    2573.71  &  1422.24  &  100   &   28.57\%   &   14.29\%\\\hline
\multirow{6}{*}{25} & 3  &  1 & 1.67\% & 1.09\%  & 32  &  20   &    8.85  &  6.35  &  58.29   &   0\%   &   0\%\\\cline{2-12}
& \multirow{2}{*}{5}  &  1 & 10.08\% & 4.86\%  & 14558  &  4854   &    819.24  &  409.37  &  90.29   &   0\%   &   0\%\\
   &    &  3 & 14.38\% & 3.57\%  & 4538  &  7011   &    224.49  &  209.53  &  100   &   0\%   &   0\%\\\cline{2-12}
&\multirow{3}{*}{8}  &  1 & 17.2\% & 10.06\%  & 52995  &  27438   &    5149.66  &  3511.41  &  100   &   71.43\%   &   28.57\%\\
   &    &  3 & 26.08\% & 6.88\%  & 37933  &  22130   &    2558.66  &  2463.34  &  100   &   28.57\%   &   28.57\%\\
   &    &  5 & 18.98\% & 7.79\%  & 123705  &  40437   &    5035.68  &  3302.36  &  100   &   57.14\%   &   42.86\%\\\hline
\end{tabular}
\caption{Average Results for the CAB dataset using \eqref{thlpu2}.\label{cab1}}
\end{table}}

\begin{figure}
\begin{center}
\fbox{\begin{tikzpicture}[xscale=0.13, yscale=0.25]

\coordinate(X1) at (24.497000,0.000000);
\coordinate(X2) at (24.497000,0.010000);
\coordinate(X3) at (7.205000,1.448000);
\coordinate(X4) at (24.497000,1.448000);
\coordinate(X5) at (7.205000,4.344000);
\coordinate(X6) at (57.640000,4.344000);
\coordinate(X7) at (24.497000,5.792000);
\coordinate(X8) at (7.205000,7.240000);
\coordinate(X9) at (30.261000,8.688000);
\coordinate(X10) at (24.497000,10.136000);
\coordinate(X11) at (56.199000,10.136000);
\coordinate(X12) at (14.410000,13.032000);
\coordinate(X13) at (21.615000,13.032000);
\coordinate(X14) at (53.317000,13.032000);
\coordinate(X15) at (28.820000,14.480000);
\coordinate(X16) at (57.640000,14.480000);
\coordinate(X17) at (23.056000,15.928000);
\coordinate(X18) at (51.876000,17.376000);
\coordinate(X19) at (23.056000,18.824000);
\coordinate(X20) at (31.702000,18.824000);
\coordinate(X21) at (34.584000,18.824000);
\coordinate(X22) at (37.466000,18.824000);
\coordinate(X23) at (40.348000,18.824000);
\coordinate(X24) at (48.994000,18.824000);
\coordinate(X25) at (18.733000,20.272000);
\node[circle,draw,inner sep=0.5mm](X-1) at (X1) {};
\node[circle,draw,inner sep=0.5mm](X-2) at (X2) {};
\node[circle,draw,inner sep=0.5mm](X-3) at (X3) {};
\node[circle,draw,inner sep=0.5mm](X-4) at (X4) {};
\node[circle,draw,inner sep=0.5mm](X-5) at (X5) {};
\node[circle,draw,inner sep=0.5mm](X-6) at (X6) {};
\node[circle,draw,fill,inner sep=0.5mm](X-7) at (X7) {};
\node[circle,draw,inner sep=0.5mm](X-8) at (X8) {};
\node[circle,draw,inner sep=0.5mm](X-9) at (X9) {};
\node[circle,draw,inner sep=0.5mm](X-10) at (X10) {};
\node[circle,draw,inner sep=0.5mm](X-11) at (X11) {};
\node[circle,draw,inner sep=0.5mm](X-12) at (X12) {};
\node[circle,draw,inner sep=0.5mm](X-13) at (X13) {};
\node[circle,draw,inner sep=0.5mm](X-14) at (X14) {};
\node[circle,draw,inner sep=0.5mm](X-15) at (X15) {};
\node[circle,draw,fill,inner sep=0.5mm](X-16) at (X16) {};
\node[circle,draw,inner sep=0.5mm](X-17) at (X17) {};
\node[circle,draw,inner sep=0.5mm](X-18) at (X18) {};
\node[circle,draw,fill,inner sep=0.5mm](X-19) at (X19) {};
\node[circle,draw,line width=0.5mm,inner sep=0.5mm](X-20) at (X20) {};
\node[circle,draw,inner sep=0.5mm](X-20) at (X20) {};
\node[circle,draw,inner sep=0.5mm](X-21) at (X21) {};
\node[circle,draw,inner sep=0.5mm](X-22) at (X22) {};
\node[circle,draw,inner sep=0.5mm](X-23) at (X23) {};
\node[circle,draw,fill,inner sep=0.5mm](X-24) at (X24) {};
\node[circle,draw,inner sep=0.5mm](X-25) at (X25) {};
\draw (X-1) --  (X-7);
\draw (X-2) --  (X-7);
\draw (X-3) --  (X-7);
\draw (X-4) --  (X-7);
\draw (X-5) --  (X-7);
\draw (X-6) --  (X-16);
\draw (X-7) --  (X-7);
\draw[very thick] (X-7) --  (X-20);
\draw (X-8) --  (X-7);
\draw (X-9) --  (X-7);
\draw (X-10) --  (X-7);
\draw (X-11) --  (X-16);
\draw (X-12) --  (X-7);
\draw (X-13) --  (X-7);
\draw (X-14) --  (X-16);
\draw (X-15) --  (X-20);
\draw (X-16) --  (X-16);
\draw[very thick] (X-16) --  (X-20);
\draw (X-17) --  (X-19);
\draw (X-18) --  (X-24);
\draw (X-19) --  (X-19);
\draw[very thick] (X-19) --  (X-20);
\draw (X-20) --  (X-20);
\draw[very thick] (X-20) --  (X-24);
\draw (X-21) --  (X-20);
\draw (X-22) --  (X-20);
\draw (X-23) --  (X-20);
\draw (X-24) --  (X-24);
\draw (X-25) --  (X-19);
\end{tikzpicture}}~\fbox{\begin{tikzpicture}[xscale=0.14, yscale=0.126]

\coordinate(X1) at (12.636459,19.644937);
\coordinate(X2) at (22.994535,18.316494);
\coordinate(X3) at (25.109379,13.461034);
\coordinate(X4) at (35.977228,21.761232);
\coordinate(X5) at (54.465362,14.583895);
\coordinate(X6) at (16.506582,34.491875);
\coordinate(X7) at (25.889015,30.419914);
\coordinate(X8) at (30.357654,31.045623);
\coordinate(X9) at (36.139999,31.567076);
\coordinate(X10) at (44.386053,32.801278);
\coordinate(X11) at (16.426283,39.421336);
\coordinate(X12) at (25.369507,39.226369);
\coordinate(X13) at (32.669659,38.251497);
\coordinate(X14) at (37.849578,38.326664);
\coordinate(X15) at (45.698532,38.341098);
\coordinate(X16) at (20.768868,45.081384);
\coordinate(X17) at (25.455421,46.525809);
\coordinate(X18) at (30.305016,47.419112);
\coordinate(X19) at (33.111914,45.662965);
\coordinate(X20) at (43.694252,42.765665);
\coordinate(X21) at (7.921644,53.604716);
\coordinate(X22) at (20.487772,51.738539);
\coordinate(X23) at (30.630363,49.230520);
\coordinate(X24) at (32.246728,50.584340);
\coordinate(X25) at (36.425404,49.852413);
\node[circle,draw,inner sep=0.5mm](X-1) at (X1) {};
\node[circle,draw,fill,inner sep=0.5mm](X-2) at (X2) {};
\node[circle,draw,inner sep=0.5mm](X-3) at (X3) {};
\node[circle,draw,inner sep=0.5mm](X-4) at (X4) {};
\node[circle,draw,inner sep=0.5mm](X-5) at (X5) {};
\node[circle,draw,inner sep=0.5mm](X-6) at (X6) {};
\node[circle,draw,inner sep=0.5mm](X-7) at (X7) {};
\node[circle,draw,fill,inner sep=0.5mm](X-8) at (X8) {};
\node[circle,draw,inner sep=0.5mm](X-9) at (X9) {};
\node[circle,draw,inner sep=0.5mm](X-10) at (X10) {};
\node[circle,draw,inner sep=0.5mm](X-11) at (X11) {};
\node[circle,draw,inner sep=0.5mm](X-12) at (X12) {};
\node[circle,draw,inner sep=0.5mm](X-13) at (X13) {};
\node[circle,draw,inner sep=0.5mm](X-14) at (X14) {};
\node[circle,draw,inner sep=0.5mm](X-15) at (X15) {};
\node[circle,draw,inner sep=0.5mm](X-16) at (X16) {};
\node[circle,draw,fill,inner sep=0.5mm](X-17) at (X17) {};
\node[circle,draw,line width=0.5mm,inner sep=0.5mm](X-18) at (X18) {};
\node[circle,draw,inner sep=0.5mm](X-18) at (X18) {};
\node[circle,draw,inner sep=0.5mm](X-19) at (X19) {};
\node[circle,draw,fill,inner sep=0.5mm](X-20) at (X20) {};
\node[circle,draw,inner sep=0.5mm](X-21) at (X21) {};
\node[circle,draw,inner sep=0.5mm](X-22) at (X22) {};
\node[circle,draw,inner sep=0.5mm](X-23) at (X23) {};
\node[circle,draw,inner sep=0.5mm](X-24) at (X24) {};
\node[circle,draw,inner sep=0.5mm](X-25) at (X25) {};
\draw (X-1) --  (X-2);
\draw (X-2) --  (X-2);
\draw[very thick] (X-2) --  (X-8);
\draw (X-3) --  (X-2);
\draw (X-4) --  (X-8);
\draw (X-5) --  (X-8);
\draw (X-6) --  (X-17);
\draw (X-7) --  (X-8);
\draw (X-8) --  (X-8);
\draw[very thick] (X-8) --  (X-18);
\draw (X-9) --  (X-8);
\draw (X-10) --  (X-20);
\draw (X-11) --  (X-17);
\draw (X-12) --  (X-17);
\draw (X-13) --  (X-8);
\draw (X-14) --  (X-18);
\draw (X-15) --  (X-20);
\draw (X-16) --  (X-17);
\draw (X-17) --  (X-17);
\draw[very thick] (X-17) --  (X-18);
\draw (X-18) --  (X-18);
\draw[very thick] (X-18) --  (X-20);
\draw (X-19) --  (X-18);
\draw (X-20) --  (X-20);
\draw (X-21) --  (X-17);
\draw (X-22) --  (X-17);
\draw (X-23) --  (X-18);
\draw (X-24) --  (X-18);
\draw (X-25) --  (X-18);
\end{tikzpicture}}
\caption{2) Solutions of one of the instances of CAB (left) and AP (right) datasets with $(n,p,q)=(25,5,1)$ and $(\alpha, \rho, \gamma)=(0.8,0.5,0.2)$.\label{fig25b}}
\end{center}
\end{figure}

{\small
\begin{table}\label{t:gaps}
\centering\begin{tabular}{|c|c|c||cc|rr|rr|c|cc|}\hline
$n$ & $p$ & $q$ &{\sc GAP} & {\sc GAP}$_{VI}$ & {\sc Nodes} & {\sc Nodes}$_{VI}$ & {\sc Time} & {\sc Time}$_{VI}$ & {\sc Cuts} & {\sc UnS} & {\sc UnS}$_{VI}$\\\hline\hline
\multirow{6}{*}{10} & 3  &  1 & 5.97\% & 1.93\%  & 86  &  45   &    0.5  &  0.43  &  64.14   &   0\%   &   0\%\\\cline{2-12}
& \multirow{2}{*}{5}  &  1 & 12.56\% & 4.68\%  & 2143  &  299   &    3.6  &  1.27  &  84.29   &   0\%   &   0\%\\
   &    &  3 & 15.94\% & 3.75\%  & 1349  &  920   &    2.15  &  3.54  &  94.43   &   0\%   &   0\%\\\cline{2-12}
&\multirow{3}{*}{8}  &  1 & 21.43\% & 8.83\%  & 18709  &  4109   &    21.35  &  8.15  &  88.86   &   0\%   &   0\%\\
   &    &  3 & 31.72\% & 8.81\%  & 58509  &  10055   &    83.86  &  25.32  &  100   &   0\%   &   0\%\\
   &    &  5 & 24.73\% & 9.76\%  & 341521  &  11137   &    488.53  &  34.56  &  100   &   0\%   &   0\%\\\hline
\multirow{6}{*}{20} & 3  &  1 & 3.6\% & 1.99\%  & 310  &  164   &    13.45  &  10.71  &  75   &   0\%   &   0\%\\\cline{2-12}
& \multirow{2}{*}{5}  &  1 & 8.09\% & 4.55\%  & 16683  &  4869   &    692.13  &  307.85  &  96.71   &   0\%   &   0\%\\
   &    &  3 & 9.8\% & 2.62\%  & 4063  &  5606   &    188.01  &  514.45  &  100   &   0\%   &   0\%\\\cline{2-12}
&\multirow{3}{*}{8}  &  1 & 12.67\% & 7.15\%  & 52147  &  49098   &    3309.44  &  3430.02  &  100   &   42.86\%   &   28.57\%\\
   &    &  3 & 18.87\% & 5.47\%  & 51402  &  28278   &    2356.79  &  2522.03  &  100   &   28.57\%   &   28.57\%\\
   &    &  5 & 14.04\% & 5.75\%  & 165883  &  48332   &    5211.14  &  3554.61  &  100   &   57.14\%   &   42.86\%\\\hline
\multirow{6}{*}{25} & 3  &  1 & 2.91\% & 1.91\%  & 555  &  224   &    51.03  &  36.75  &  75.57   &   0\%   &   0\%\\\cline{2-12}
& \multirow{2}{*}{5}  &  1 & 7.83\% & 4.52\%  & 11320  &  4678   &    2220.5  &  902.69  &  92.57   &   28.57\%   &   0\%\\
   &    &  3 & 9.97\% & 2.79\%  & 3146  &  7428   &    325.84  &  1375.07  &  100   &   0\%   &   0\%\\\cline{2-12}
&\multirow{3}{*}{8}  &  1 & 12.48\% & 8.26\%  & 15721  &  13071   &    4076.31  &  5470.24  &  98.71   &   42.86\%   &   71.43\%\\
   &    &  3 & 17.48\% & 5.25\%  & 22312  &  18922   &    3746.89  &  6636.55  &  100   &   28.57\%   &   57.14\%\\
   &    &  5 & 12.47\% & 6.32\%  & 47079  &  13777   &    6422.61  &  4752.05  &  100   &   85.71\%   &   42.86\%\\\hline
\end{tabular}
\caption{Average Results for the AP dataset  using \eqref{thlpu2}.\label{ap1}}
\end{table}}

\begin{figure}
\begin{center}
\fbox{\begin{tikzpicture}[xscale=0.13, yscale=0.25]

\coordinate(X1) at (24.497000,0.000000);
\coordinate(X2) at (24.497000,0.010000);
\coordinate(X3) at (7.205000,1.448000);
\coordinate(X4) at (24.497000,1.448000);
\coordinate(X5) at (7.205000,4.344000);
\coordinate(X6) at (57.640000,4.344000);
\coordinate(X7) at (24.497000,5.792000);
\coordinate(X8) at (7.205000,7.240000);
\coordinate(X9) at (30.261000,8.688000);
\coordinate(X10) at (24.497000,10.136000);
\coordinate(X11) at (56.199000,10.136000);
\coordinate(X12) at (14.410000,13.032000);
\coordinate(X13) at (21.615000,13.032000);
\coordinate(X14) at (53.317000,13.032000);
\coordinate(X15) at (28.820000,14.480000);
\coordinate(X16) at (57.640000,14.480000);
\coordinate(X17) at (23.056000,15.928000);
\coordinate(X18) at (51.876000,17.376000);
\coordinate(X19) at (23.056000,18.824000);
\coordinate(X20) at (31.702000,18.824000);
\coordinate(X21) at (34.584000,18.824000);
\coordinate(X22) at (37.466000,18.824000);
\coordinate(X23) at (40.348000,18.824000);
\coordinate(X24) at (48.994000,18.824000);
\coordinate(X25) at (18.733000,20.272000);
\node[circle,draw,inner sep=0.5mm](X-1) at (X1) {};
\node[circle,draw,inner sep=0.5mm](X-2) at (X2) {};
\node[circle,draw,inner sep=0.5mm](X-3) at (X3) {};
\node[circle,draw,inner sep=0.5mm](X-4) at (X4) {};
\node[circle,draw,fill,inner sep=0.5mm](X-5) at (X5) {};
\node[circle,draw,inner sep=0.5mm](X-6) at (X6) {};
\node[circle,draw,line width=0.5mm,inner sep=0.5mm](X-7) at (X7) {};
\node[circle,draw,inner sep=0.5mm](X-7) at (X7) {};
\node[circle,draw,inner sep=0.5mm](X-8) at (X8) {};
\node[circle,draw,inner sep=0.5mm](X-9) at (X9) {};
\node[circle,draw,inner sep=0.5mm](X-10) at (X10) {};
\node[circle,draw,inner sep=0.5mm](X-11) at (X11) {};
\node[circle,draw,inner sep=0.5mm](X-12) at (X12) {};
\node[circle,draw,inner sep=0.5mm](X-13) at (X13) {};
\node[circle,draw,inner sep=0.5mm](X-14) at (X14) {};
\node[circle,draw,inner sep=0.5mm](X-15) at (X15) {};
\node[circle,draw,line width=0.5mm,inner sep=0.5mm](X-16) at (X16) {};
\node[circle,draw,inner sep=0.5mm](X-16) at (X16) {};
\node[circle,draw,inner sep=0.5mm](X-17) at (X17) {};
\node[circle,draw,inner sep=0.5mm](X-18) at (X18) {};
\node[circle,draw,fill,inner sep=0.5mm](X-19) at (X19) {};
\node[circle,draw,fill,inner sep=0.5mm](X-20) at (X20) {};
\node[circle,draw,inner sep=0.5mm](X-21) at (X21) {};
\node[circle,draw,inner sep=0.5mm](X-22) at (X22) {};
\node[circle,draw,inner sep=0.5mm](X-23) at (X23) {};
\node[circle,draw,inner sep=0.5mm](X-24) at (X24) {};
\node[circle,draw,inner sep=0.5mm](X-25) at (X25) {};
\draw (X-1) --  (X-7);
\draw (X-2) --  (X-7);
\draw (X-3) --  (X-5);
\draw (X-4) --  (X-7);
\draw (X-5) --  (X-5);
\draw[very thick] (X-5) --  (X-7);
\draw (X-6) --  (X-16);
\draw (X-7) --  (X-7);
\draw[very thick,dashed] (X-7) --  (X-16);
\draw[very thick] (X-7) --  (X-19);
\draw[very thick] (X-7) --  (X-20);
\draw (X-8) --  (X-5);
\draw (X-9) --  (X-7);
\draw (X-10) --  (X-7);
\draw (X-11) --  (X-16);
\draw (X-12) --  (X-19);
\draw (X-13) --  (X-19);
\draw (X-14) --  (X-16);
\draw (X-15) --  (X-20);
\draw (X-16) --  (X-16);
\draw (X-17) --  (X-19);
\draw (X-18) --  (X-16);
\draw (X-19) --  (X-19);
\draw (X-20) --  (X-20);
\draw (X-21) --  (X-20);
\draw (X-22) --  (X-20);
\draw (X-23) --  (X-20);
\draw (X-24) --  (X-16);
\draw (X-25) --  (X-19);
\end{tikzpicture}}~\fbox{\begin{tikzpicture}[xscale=0.14, yscale=0.126]

\coordinate(X1) at (12.636459,19.644937);
\coordinate(X2) at (22.994535,18.316494);
\coordinate(X3) at (25.109379,13.461034);
\coordinate(X4) at (35.977228,21.761232);
\coordinate(X5) at (54.465362,14.583895);
\coordinate(X6) at (16.506582,34.491875);
\coordinate(X7) at (25.889015,30.419914);
\coordinate(X8) at (30.357654,31.045623);
\coordinate(X9) at (36.139999,31.567076);
\coordinate(X10) at (44.386053,32.801278);
\coordinate(X11) at (16.426283,39.421336);
\coordinate(X12) at (25.369507,39.226369);
\coordinate(X13) at (32.669659,38.251497);
\coordinate(X14) at (37.849578,38.326664);
\coordinate(X15) at (45.698532,38.341098);
\coordinate(X16) at (20.768868,45.081384);
\coordinate(X17) at (25.455421,46.525809);
\coordinate(X18) at (30.305016,47.419112);
\coordinate(X19) at (33.111914,45.662965);
\coordinate(X20) at (43.694252,42.765665);
\coordinate(X21) at (7.921644,53.604716);
\coordinate(X22) at (20.487772,51.738539);
\coordinate(X23) at (30.630363,49.230520);
\coordinate(X24) at (32.246728,50.584340);
\coordinate(X25) at (36.425404,49.852413);
\node[circle,draw,inner sep=0.5mm](X-1) at (X1) {};
\node[circle,draw,fill,inner sep=0.5mm](X-2) at (X2) {};
\node[circle,draw,inner sep=0.5mm](X-3) at (X3) {};
\node[circle,draw,inner sep=0.5mm](X-4) at (X4) {};
\node[circle,draw,inner sep=0.5mm](X-5) at (X5) {};
\node[circle,draw,inner sep=0.5mm](X-6) at (X6) {};
\node[circle,draw,inner sep=0.5mm](X-7) at (X7) {};
\node[circle,draw,line width=0.5mm,inner sep=0.5mm](X-8) at (X8) {};
\node[circle,draw,inner sep=0.5mm](X-8) at (X8) {};
\node[circle,draw,inner sep=0.5mm](X-9) at (X9) {};
\node[circle,draw,inner sep=0.5mm](X-10) at (X10) {};
\node[circle,draw,inner sep=0.5mm](X-11) at (X11) {};
\node[circle,draw,inner sep=0.5mm](X-12) at (X12) {};
\node[circle,draw,inner sep=0.5mm](X-13) at (X13) {};
\node[circle,draw,inner sep=0.5mm](X-14) at (X14) {};
\node[circle,draw,inner sep=0.5mm](X-15) at (X15) {};
\node[circle,draw,inner sep=0.5mm](X-16) at (X16) {};
\node[circle,draw,fill,inner sep=0.5mm](X-17) at (X17) {};
\node[circle,draw,line width=0.5mm,inner sep=0.5mm](X-18) at (X18) {};
\node[circle,draw,inner sep=0.5mm](X-18) at (X18) {};
\node[circle,draw,inner sep=0.5mm](X-19) at (X19) {};
\node[circle,draw,fill,inner sep=0.5mm](X-20) at (X20) {};
\node[circle,draw,inner sep=0.5mm](X-21) at (X21) {};
\node[circle,draw,inner sep=0.5mm](X-22) at (X22) {};
\node[circle,draw,inner sep=0.5mm](X-23) at (X23) {};
\node[circle,draw,inner sep=0.5mm](X-24) at (X24) {};
\node[circle,draw,inner sep=0.5mm](X-25) at (X25) {};
\draw (X-1) --  (X-2);
\draw (X-2) --  (X-2);
\draw[very thick] (X-2) --  (X-8);
\draw (X-3) --  (X-2);
\draw (X-4) --  (X-8);
\draw (X-5) --  (X-8);
\draw (X-6) --  (X-8);
\draw (X-7) --  (X-8);
\draw (X-8) --  (X-8);
\draw[very thick,dashed] (X-8) --  (X-18);
\draw (X-9) --  (X-8);
\draw (X-10) --  (X-20);
\draw (X-11) --  (X-17);
\draw (X-12) --  (X-17);
\draw (X-13) --  (X-8);
\draw (X-14) --  (X-20);
\draw (X-15) --  (X-20);
\draw (X-16) --  (X-17);
\draw (X-17) --  (X-17);
\draw[very thick] (X-17) --  (X-18);
\draw (X-18) --  (X-18);
\draw[very thick] (X-18) --  (X-20);
\draw (X-19) --  (X-18);
\draw (X-20) --  (X-20);
\draw (X-21) --  (X-17);
\draw (X-22) --  (X-17);
\draw (X-23) --  (X-18);
\draw (X-24) --  (X-18);
\draw (X-25) --  (X-18);\end{tikzpicture}}
\caption{3) Solutions of one of the instances of CAB (left) and AP (right) datasets with $(n,p,q)=(25,5,2)$ and $(\alpha, \rho, \gamma)=(0.8,0.2,0.1)$.\label{fig25c}}
\end{center}
\end{figure}
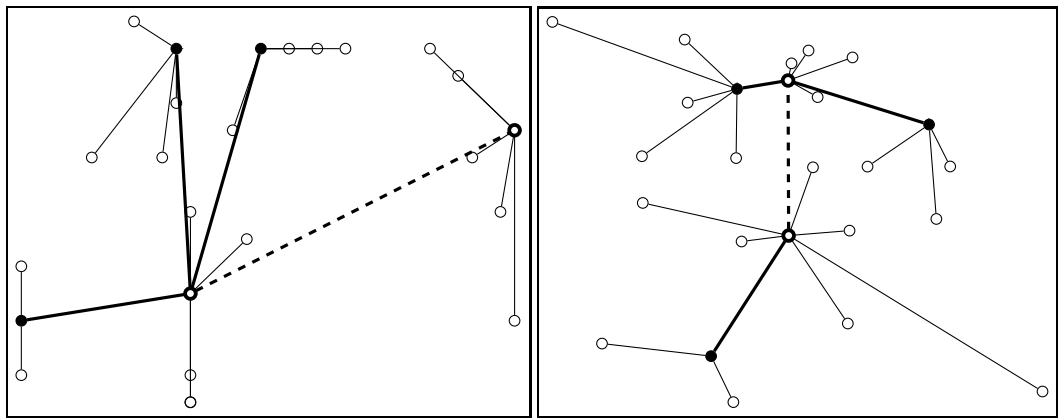

We observe that the disaggregated formulations has some strengths with respect to \eqref{thlpu1}. In this case, only 10\% (for \eqref{thlpu2}) and 5\% (for \eqref{thlpu2}+VI) of the CAB instances and 17\% (for \eqref{thlpu2}) and 15\% (for \eqref{thlpu2}+VI) of the AP instances, were not optimally solved within the time limit. The average GAP differences between adding or not valid inequalities to \eqref{thlpu2} were 11\% and  9\%, fo CAB and AP, respectively. $7$ out of the CAB instances and $4$ of the AP instances were optimally solved adding the valid inequalities but not with \eqref{thlpu1} and only $3$ of the AP instances were not solved adding the valid inequalities, but they did without them. Concerning the CPU times, in $85\%$ of the CAB instances and $60\%$ of the AP instances, the time for solving THLPU using \eqref{thlpu2}+VI was smaller than using \eqref{thlpu2} without valid inequalities.

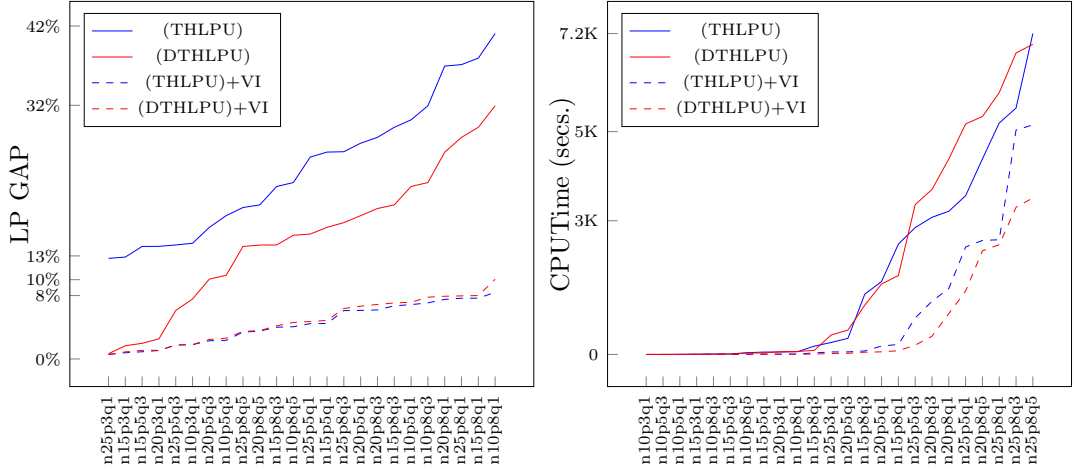
\begin{figure}
\begin{center}

\begin{tikzpicture}[xscale=0.9, yscale=0.9]
\begin{axis}[ylabel=LP GAP,
legend pos=north west,
xtick={0.1,0.2,0.3,0.4,0.5,0.6,0.7,0.8,0.9,1,1.1,1.2,1.3,1.4,1.5,1.6,1.7,1.8,1.9,2,2.1,2.2,2.3,2.4},
ytick={0,0.08,0.10, 0.13,0.32,0.42},
xtick pos=left,
ytick pos=left,
ylabel style={at={(-0.07,0.5)}},
yticklabels={$0\%$, $8\%$, $10\%$, $13\%$, $32\%$, $42\%$},
xticklabels={n25p3q1,n15p3q1,n15p5q3,n20p3q1,n25p5q3,n10p3q1,n20p5q3,n10p5q3,n25p8q5,n20p8q5,n15p8q3,n10p8q5,n25p5q1,n15p5q1,n25p8q3,n20p5q1,n20p8q3,n15p8q3,n10p5q1,n10p8q3,n20p8q1,n25p8q1,n15p8q1,n10p8q1},
xticklabel style={rotate=90, font=\tiny},
yticklabel style={font=\tiny}]

\addplot[no marks,blue] coordinates {(0.1,0.1268884) (0.2,0.1285529) (0.3,0.1419028) (0.4,0.142066) (0.5,0.1438113) (0.6,0.145944) (0.7,0.166023) (0.8,0.1808032) (0.9,0.1910593) (1,0.1943933) (1.1,0.2175429) (1.2,0.2225512) (1.3,0.2547006) (1.4,0.2609639) (1.5,0.2614362) (1.6,0.2721999) (1.7,0.2795704) (1.8,0.2922242) (1.9,0.3016656) (2,0.3193781) (2.1,0.3695029) (2.2,0.3713371) (2.3,0.3795459) (2.4,0.4105672)  };

\addplot[no marks, red] coordinates {
(0.1,0.006582554) (0.2,0.01667387) (0.3,0.01971189) (0.4,0.02538897) (0.5,0.06113225) (0.6,0.07565109) (0.7,0.1007675) (0.8,0.1055611) (0.9,0.1419028) (1,0.1436783) (1.1,0.1438113) (1.2,0.1560263) (1.3,0.1575313) (1.4,0.166023) (1.5,0.171987) (1.6,0.1808032) (1.7,0.1897668) (1.8,0.1943933) (1.9,0.2175429) (2,0.2225512) (2.1,0.2608424) (2.2,0.2795704) (2.3,0.2922242) (2.4,0.3193781) };

\addplot[no marks,dashed, blue] coordinates { 
(0.1,0.005671141) (0.2,0.007852326) (0.3,0.009069813) (0.4,0.01068785) (0.5,0.01746155) (0.6,0.01804381) (0.7,0.02298688) (0.8,0.02346545) (0.9,0.03381638) (1,0.03512423) (1.1,0.04004482) (1.2,0.04061376) (1.3,0.04453752) (1.4,0.04468433) (1.5,0.06098719) (1.6,0.06122056) (1.7,0.06185457) (1.8,0.0670588) (1.9,0.06852901) (2,0.07072154) (2.1,0.07514486) (2.2,0.07662432) (2.3,0.07670415) (2.4,0.08400692) };

\addplot[no marks, dashed,red] coordinates {
(0.1,0.005651014) (0.2,0.008926262) (0.3,0.01085642) (0.4,0.01089616) (0.5,0.01766566) (0.6,0.01813014) (0.7,0.02460162) (0.8,0.02625246) (0.9,0.03471339) (1,0.03572321) (1.1,0.04184086) (1.2,0.04602778) (1.3,0.04721389) (1.4,0.0486291) (1.5,0.06365751) (1.6,0.0665975) (1.7,0.06880628) (1.8,0.07062727) (1.9,0.07159302) (2,0.07794243) (2.1,0.07911036) (2.2,0.07957894) (2.3,0.07999247) (2.4,0.1005763) };

\addlegendentry{\tiny\eqref{thlpu1}}
\addlegendentry{\tiny\eqref{thlpu2}}
\addlegendentry{\tiny\eqref{thlpu1}+VI}
\addlegendentry{\tiny\eqref{thlpu2}+VI}
\end{axis}
\end{tikzpicture}~\begin{tikzpicture}[xscale=0.9, yscale=0.9]
\begin{axis}[ylabel=${\rm CPU Time}$ (secs.),
legend pos=north west,
xtick={0.1,0.2,0.3,0.4,0.5,0.6,0.7,0.8,0.9,1,1.1,1.2,1.3,1.4,1.5,1.6,1.7,1.8,1.9,2,2.1,2.2,2.3,2.4},
ytick={0,3000,5000,7200},
xtick pos=left,
ytick pos=left,
ylabel style={at={(-0.05,0.5)}},
yticklabels={$0$,$3$K, $5$K,$7.2$K},
xticklabels={n10p3q1,n10p5q3,n10p5q1,n15p3q1,n10p8q3,n15p5q3,n10p8q5,n15p5q1,n20p3q1,n10p8q1,n15p8q3,n25p3q1,n20p5q3,n15p8q3,n20p5q1,n15p8q1,n25p5q3,n20p8q3,n20p8q1,n25p5q1,n20p8q5,n25p8q1,n25p8q3,n25p8q5,
},
xticklabel style={rotate=90, font=\tiny},
yticklabel style={font=\tiny}]

\addplot[no marks,blue] coordinates {(0.1,0.6743421) (0.2,0.911711) (0.3,2.662554) (0.4,7.22831) (0.5,10.26102) (0.6,11.70498) (0.7,39.88853) (0.8,54.3245) (0.9,55.34119) (1,62.19299) (1.1,186.9844) (1.2,269.2224) (1.3,361.9155) (1.4,1352.03) (1.5,1637.181) (1.6,2475.534) (1.7,2846.83) (1.8,3077.457) (1.9,3213.18) (2,3561.087) (2.1,4387.132) (2.2,5193.181) (2.3,5528.636) (2.4,7200.081) 
};

\addplot[no marks, red] coordinates {
(0.1,0.8636907) (0.2,2.176303) (0.3,3.467604) (0.4,5.848562) (0.5,8.384597) (0.6,10.24605) (0.7,31.28262) (0.8,34.97518) (0.9,62.72033) (1,66.73788) (1.1,93.33421) (1.2,438.2319) (1.3,546.0088) (1.4,1105.032) (1.5,1581.081) (1.6,1770.632) (1.7,3361.695) (1.8,3701.573) (1.9,4385.508) (2,5171.51) (2.1,5337.845) (2.2,5874.183) (2.3,6763.241) (2.4,6958.075) 
 };

\addplot[no marks,dashed, blue] coordinates { 
(0.1,0.2330826) (0.2,0.4833191) (0.3,0.5702929) (0.4,0.7992233) (0.5,2.637205) (0.6,3.728841) (0.7,6.254884) (0.8,6.344827) (0.9,7.290982) (1,8.848145) (1.1,34.71767) (1.2,56.75748) (1.3,57.99853) (1.4,79.02219) (1.5,189.4694) (1.6,224.4866) (1.7,819.2417) (1.8,1193.561) (1.9,1479.625) (2,2409.974) (2.1,2558.66) (2.2,2573.711) (2.3,5035.683) (2.4,5149.662) 
};

\addplot[no marks, dashed,red] coordinates {(0.1,0.1808333) (0.2,0.4023146) (0.3,0.4323146) (0.4,0.6022357) (0.5,1.570271) (0.6,2.179924) (0.7,2.856963) (0.8,5.656551) (0.9,6.345223) (1,7.23874) (1.1,8.841873) (1.2,21.97999) (1.3,26.21288) (1.4,51.06265) (1.5,59.00779) (1.6,84.32545) (1.7,209.5312) (1.8,409.3741) (1.9,917.3269) (2,1422.237) (2.1,2326.278) (2.2,2463.344) (2.3,3302.363) (2.4,3511.408)  };

\addlegendentry{\tiny\eqref{thlpu1}}
\addlegendentry{\tiny\eqref{thlpu2}}
\addlegendentry{\tiny\eqref{thlpu1}+VI}
\addlegendentry{\tiny\eqref{thlpu2}+VI}
\end{axis}
\end{tikzpicture}
\caption{LP Gaps and CPU Times for the CAB dataset.\label{CABlines1}}
\end{center}
\end{figure}

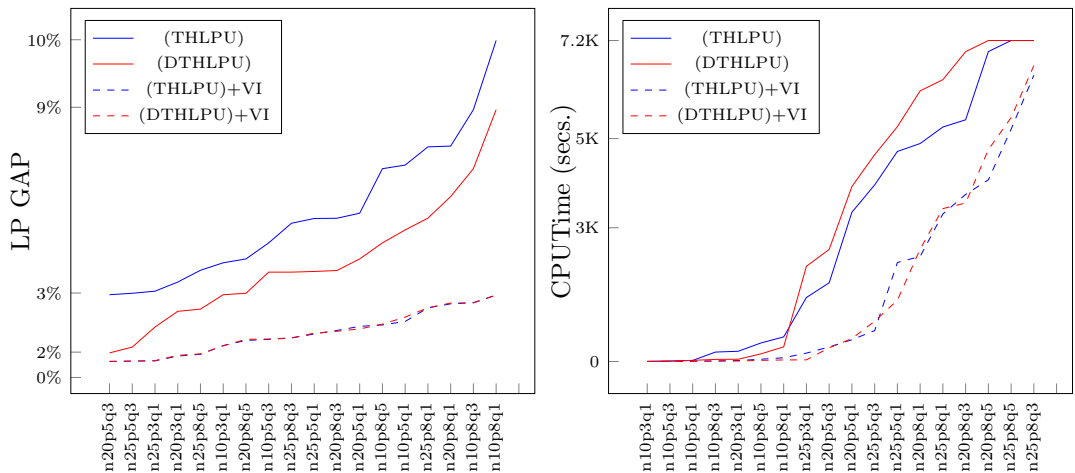
\begin{figure}
\begin{center}

\begin{tikzpicture}[xscale=0.9, yscale=0.9]
\begin{axis}[ylabel=LP GAP,
legend pos=north west,
xtick={0.1,0.2,0.3,0.4,0.5,0.6,0.7,0.8,0.9,1,1.1,1.2,1.3,1.4,1.5,1.6,1.7,1.8,1.9,2,2.1,2.2,2.3,2.4},
ytick={0,0.03,0.10, 0.32,0.4},
xtick pos=left,
ytick pos=left,
ylabel style={at={(-0.07,0.5)}},
yticklabels={$0\%$, $2\%$, $3\%$, $9\%$, $10\%$,  $32\%$, $40\%$},
xticklabels={n20p5q3,n25p5q3,n25p3q1,n20p3q1,n25p8q5,n10p3q1,n20p8q5,n10p5q3,n25p8q3,n25p5q1,n20p8q3,n20p5q1,n10p8q5,n10p5q1,n25p8q1,n20p8q1,n10p8q3,n10p8q1
},
xticklabel style={rotate=90, font=\tiny},
yticklabel style={font=\tiny}]

\addplot[no marks,blue] coordinates {(0.1,0.09803462) (0.2,0.09972343) (0.3,0.1022024) (0.4,0.1129897) (0.5,0.1269658) (0.6,0.1358303) (0.7,0.1404005) (0.8,0.1594156) (0.9,0.1826401) (1,0.1883123) (1.1,0.1885837) (1.2,0.1945561) (1.3,0.247298) (1.4,0.2515488) (1.5,0.2731571) (1.6,0.2740636) (1.7,0.3171821) (1.8,0.3990989) 
};

\addplot[no marks, red] coordinates {(0.1,0.0291037) (0.2,0.03596651) (0.3,0.05974103) (0.4,0.07832012) (0.5,0.08094651) (0.6,0.09803462) (0.7,0.09972343) (0.8,0.1247491) (0.9,0.1247581) (1,0.1255647) (1.1,0.1266797) (1.2,0.1404005) (1.3,0.1594156) (1.4,0.1747711) (1.5,0.1886972) (1.6,0.2142837) (1.7,0.247298) (1.8,0.3171821) 
 };

\addplot[no marks,dashed, blue] coordinates {(0.1,0.01894205) (0.2,0.01942651) (0.3,0.01969849) (0.4,0.02536903) (0.5,0.0274445) (0.6,0.0377647) (0.7,0.04391997) (0.8,0.04514521) (0.9,0.04678452) (1,0.05145225) (1.1,0.05568498) (1.2,0.06051096) (1.3,0.06220988) (1.4,0.0663581) (1.5,0.08243305) (1.6,0.08719868) (1.7,0.08863519) (1.8,0.0975779)  };

\addplot[no marks, dashed,red] coordinates {
(0.1,0.01910488) (0.2,0.01931742) (0.3,0.0199019) (0.4,0.02621238) (0.5,0.02791331) (0.6,0.0375383) (0.7,0.04519996) (0.8,0.04552573) (0.9,0.04682384) (1,0.05250077) (1.1,0.05474312) (1.2,0.05748738) (1.3,0.06316101) (1.4,0.07149701) (1.5,0.08257793) (1.6,0.08813513) (1.7,0.08826735) (1.8,0.09762072) };

\addlegendentry{\tiny\eqref{thlpu1}}
\addlegendentry{\tiny\eqref{thlpu2}}
\addlegendentry{\tiny\eqref{thlpu1}+VI}
\addlegendentry{\tiny\eqref{thlpu2}+VI}
\end{axis}
\end{tikzpicture}~\begin{tikzpicture}[xscale=0.9, yscale=0.9]
\begin{axis}[ylabel=${\rm CPU Time}$ (secs.),
legend pos=north west,
xtick={0.1,0.2,0.3,0.4,0.5,0.6,0.7,0.8,0.9,1,1.1,1.2,1.3,1.4,1.5,1.6,1.7,1.8,1.9,2,2.1,2.2,2.3,2.4},
ytick={0,3000,5000,7200},
xtick pos=left,
ytick pos=left,
ylabel style={at={(-0.05,0.5)}},
yticklabels={$0$,$3$K, $5$K,$7.2$K},
xticklabels={ n10p3q1,n10p5q3,n10p5q1,n10p8q3,n20p3q1,n10p8q5,n10p8q1,n25p3q1,n20p5q3,n20p5q1,n25p5q3,n25p5q1,n20p8q1,n25p8q1,n20p8q3,n20p8q5,n25p8q5,n25p8q3
 },
xticklabel style={rotate=90, font=\tiny},
yticklabel style={font=\tiny}]

\addplot[no marks,blue] coordinates {
(0.1,2.467477) (0.2,6.689445) (0.3,23.49947) (0.4,211.6551) (0.5,229.2259) (0.6,415.5731) (0.7,551.7424) (0.8,1435.729) (0.9,1765.749) (1,3347.988) (1.1,3960.224) (1.2,4711.684) (1.3,4889.963) (1.4,5259.531) (1.5,5422.102) (1.6,6952.097) (1.7,7200.041) (1.8,7200.079) 
};

\addplot[no marks, red] coordinates {
(0.1,2.34729) (0.2,13.38838) (0.3,25.67401) (0.4,46.94549) (0.5,50.84579) (0.6,170.9025) (0.7,328.59) (0.8,2136.171) (0.9,2513.639) (1,3924.913) (1.1,4638.895) (1.2,5267.489) (1.3,6070.698) (1.4,6321.671) (1.5,6950.072) (1.6,7200.038) (1.7,7200.039) (1.8,7200.045) 
};

\addplot[no marks,dashed, blue] coordinates {
(0.1,0.5018726) (0.2,2.148882) (0.3,3.602802) (0.4,13.45002) (0.5,21.3503) (0.6,51.03325) (0.7,83.86017) (0.8,188.0127) (0.9,325.8429) (1,488.5276) (1.1,692.1268) (1.2,2220.504) (1.3,2356.792) (1.4,3309.444) (1.5,3746.891) (1.6,4076.312) (1.7,5211.14) (1.8,6422.609) 
};

\addplot[no marks, dashed,red] coordinates {
(0.1,0.4300474) (0.2,1.265316) (0.3,3.535884) (0.4,8.145456) (0.5,10.71161) (0.6,25.31923) (0.7,34.56078) (0.8,36.74856) (0.9,307.854) (1,514.4482) (1.1,902.6933) (1.2,1375.068) (1.3,2522.025) (1.4,3430.02) (1.5,3554.611) (1.6,4752.047) (1.7,5470.236) (1.8,6636.551) };

\addlegendentry{\tiny\eqref{thlpu1}}
\addlegendentry{\tiny\eqref{thlpu2}}
\addlegendentry{\tiny\eqref{thlpu1}+VI}
\addlegendentry{\tiny\eqref{thlpu2}+VI}
\end{axis}
\end{tikzpicture}
\caption{LP Gaps and CPU Times for the AP dataset.\label{APines1}}
\end{center}
\end{figure}

One can observe that the THLPU is still very time consuming with the disaggregated formulation, but some improvements  were detected when comparing with \eqref{thlpu1}. First, the LP gaps obtained with \eqref{thlpu2} are an average of $0.1$ smaller than those obtained with \eqref{thlpu2} (see left picture in Figure \ref{CABlines1}). However, such a significative difference, does not always positively affects a decreasing on the CPU times needed to solve THLPU instances. The consuming CPU times for solving the problems using \eqref{thlpu1} and \eqref{thlpu2} are quite similar, in average. Nevertheless, when our family of valid inequalities are incorporated, the best CPU times obtained (when comparing the four approaches) are obtained with \eqref{thlpu2}+VI (see right picture in Figure  \ref{CABlines1}). Actually, using such a strengthening we were able to solve up to optimality  the greatest number of instances (all except $10$ for the CAB dataset), even being the average number of nodes explored in the branch-and-bound tree, greater, in average for \eqref{thlpu2}+VI than for \eqref{thlpu2} without incorporating the new family of inequalities. Note that when valid inequalities are considered in our model, the exploration of the search tree in the branch-and-bound procedure, may differ and although the LP relaxation of \eqref{thlpu2}+VI, the optimality has still to be checked, which may consume a huge amount of time.

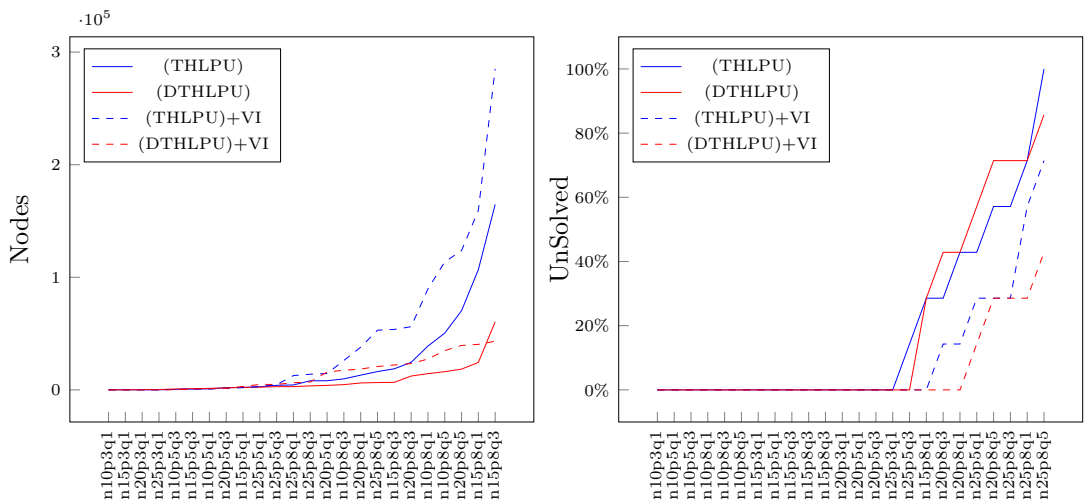
\begin{figure}
\begin{center}

\begin{tikzpicture}[xscale=0.9, yscale=0.9]
\begin{axis}[ylabel=${\rm Nodes}$,
legend pos=north west,
xtick={0.1,0.2,0.3,0.4,0.5,0.6,0.7,0.8,0.9,1,1.1,1.2,1.3,1.4,1.5,1.6,1.7,1.8,1.9,2,2.1,2.2,2.3,2.4},
xtick pos=left,
ytick pos=left,
ylabel style={at={(-0.07,0.5)}},
xticklabels={n10p3q1,n15p3q1,n20p3q1,n25p3q1,n10p5q3,n15p5q3,n10p5q1,n20p5q3,n15p5q1,n25p5q1,n25p5q3,n25p8q1,n25p8q3,n20p5q1,n10p8q3,n20p8q1,n25p8q5,n15p8q3,n20p8q3,n10p8q1,n10p8q5,n20p8q5,n15p8q1,n15p8q3},
xticklabel style={rotate=90, font=\tiny},
yticklabel style={font=\tiny}]

\addplot[no marks,blue] coordinates {(0.1,54.85714) (0.2,65.85714) (0.3,69) (0.4,77.57143) (0.5,433.8571) (0.6,729.7143) (0.7,901.4286) (0.8,2018.429) (0.9,2209.571) (1,2727.143) (1.1,4047.571) (1.2,4383) (1.3,8085.571) (1.4,8120.571) (1.5,9806.857) (1.6,13080) (1.7,16258.29) (1.8,18728.14) (1.9,24414.29) (2,38897.43) (2.1,50420.86) (2.2,70308.14) (2.3,106814.6) (2.4,164716.4)  };

\addplot[no marks, red] coordinates {
(0.1,41.71429) (0.2,97.85714) (0.3,219) (0.4,317.5714) (0.5,824.1429) (0.6,1081.857) (0.7,1324.429) (0.8,1764.571) (0.9,2184.286) (1,2456.286) (1.1,2852.286) (1.2,2914.143) (1.3,3559.143) (1.4,3978.143) (1.5,4679.286) (1.6,6165.429) (1.7,6517.143) (1.8,6713.714) (1.9,12297.14) (2,14418.29) (2.1,16187.43) (2.2,18452.43) (2.3,24390.43) (2.4,60480.29) };

\addplot[no marks,dashed, blue] coordinates {
(0.1,32) (0.2,43.28571) (0.3,45.71429) (0.4,54) (0.5,502) (0.6,627.5714) (0.7,761.7143) (0.8,1342) (0.9,1990.286) (1,3023.571) (1.1,4538.286) (1.2,12722) (1.3,13900.43) (1.4,14557.86) (1.5,26091.29) (1.6,37933) (1.7,52994.71) (1.8,53711.57) (1.9,56107.86) (2,89632.57) (2.1,113634.3) (2.2,123705) (2.3,159421.4) (2.4,285057) 
};

\addplot[no marks, dashed,red] coordinates {(0.1,8) (0.2,10.28571) (0.3,15.85714) (0.4,19.57143) (0.5,141.7143) (0.6,678.2857) (0.7,1102.571) (0.8,1388.143) (0.9,3064.286) (1,4849.429) (1.1,4854) (1.2,6379.857) (1.3,7010.571) (1.4,15891) (1.5,17452.43) (1.6,18511.57) (1.7,20780) (1.8,22129.86) (1.9,23355.29) (2,27437.71) (2.1,34865.86) (2.2,39439) (2.3,40437.29) (2.4,43225) };

\addlegendentry{\tiny\eqref{thlpu1}}
\addlegendentry{\tiny\eqref{thlpu2}}
\addlegendentry{\tiny\eqref{thlpu1}+VI}
\addlegendentry{\tiny\eqref{thlpu2}+VI}
\end{axis}
\end{tikzpicture}~\begin{tikzpicture}[xscale=0.9, yscale=0.9]
\begin{axis}[ylabel=${\rm UnSolved}$,
legend pos=north west,
xtick={0.1,0.2,0.3,0.4,0.5,0.6,0.7,0.8,0.9,1,1.1,1.2,1.3,1.4,1.5,1.6,1.7,1.8,1.9,2,2.1,2.2,2.3,2.4},
ytick={0,0.2,0.4,0.6,0.8,1},
xtick pos=left,
ytick pos=left,
ylabel style={at={(-0.09,0.5)}},
yticklabels={$0\%$, $20\%$, $40\%$, $60\%$, $80\%$, $100\%$},
xticklabels={n10p3q1,n10p5q1,n10p5q3,n10p8q1,n10p8q3,n10p8q5,n15p3q1,n15p5q1,n15p5q3,n15p8q3,n15p8q3,n20p3q1,n20p5q1,n20p5q3,n25p3q1,n25p5q3,n15p8q1,n20p8q3,n20p8q1,n25p5q1,n20p8q5,n25p8q3,n25p8q1,n25p8q5
 },
xticklabel style={rotate=90, font=\tiny},
yticklabel style={font=\tiny}]

\addplot[no marks,blue] coordinates {(0.1,0) (0.2,0) (0.3,0) (0.4,0) (0.5,0) (0.6,0) (0.7,0) (0.8,0) (0.9,0) (1,0) (1.1,0) (1.2,0) (1.3,0) (1.4,0) (1.5,0) (1.6,0.1428571) (1.7,0.2857143) (1.8,0.2857143) (1.9,0.4285714) (2,0.4285714) (2.1,0.5714286) (2.2,0.5714286) (2.3,0.7142857) (2.4,1) 
};

\addplot[no marks, red] coordinates {(0.1,0) (0.2,0) (0.3,0) (0.4,0) (0.5,0) (0.6,0) (0.7,0) (0.8,0) (0.9,0) (1,0) (1.1,0) (1.2,0) (1.3,0) (1.4,0) (1.5,0) (1.6,0) (1.7,0.2857143) (1.8,0.4285714) (1.9,0.4285714) (2,0.5714286) (2.1,0.7142857) (2.2,0.7142857) (2.3,0.7142857) (2.4,0.8571429) 
};

\addplot[no marks,dashed, blue] coordinates {(0.1,0) (0.2,0) (0.3,0) (0.4,0) (0.5,0) (0.6,0) (0.7,0) (0.8,0) (0.9,0) (1,0) (1.1,0) (1.2,0) (1.3,0) (1.4,0) (1.5,0) (1.6,0) (1.7,0) (1.8,0.1428571) (1.9,0.1428571) (2,0.2857143) (2.1,0.2857143) (2.2,0.2857143) (2.3,0.5714286) (2.4,0.7142857) 
};

\addplot[no marks, dashed,red] coordinates {(0.1,0) (0.2,0) (0.3,0) (0.4,0) (0.5,0) (0.6,0) (0.7,0) (0.8,0) (0.9,0) (1,0) (1.1,0) (1.2,0) (1.3,0) (1.4,0) (1.5,0) (1.6,0) (1.7,0) (1.8,0) (1.9,0) (2,0.1428571) (2.1,0.2857143) (2.2,0.2857143) (2.3,0.2857143) (2.4,0.4285714)  };

\addlegendentry{\tiny\eqref{thlpu1}}
\addlegendentry{\tiny\eqref{thlpu2}}
\addlegendentry{\tiny\eqref{thlpu1}+VI}
\addlegendentry{\tiny\eqref{thlpu2}+VI}
\end{axis}
\end{tikzpicture}
\caption{Nodes and UnSolved Instances for the CAB dataset.\label{CABlines2}}
\end{center}
\end{figure}

\begin{figure}
\begin{center}

\begin{tikzpicture}[xscale=0.9, yscale=0.9]
\begin{axis}[ylabel=${\rm Nodes}$,
legend pos=north west,
xtick={0.1,0.2,0.3,0.4,0.5,0.6,0.7,0.8,0.9,1,1.1,1.2,1.3,1.4,1.5,1.6,1.7,1.8,1.9,2,2.1,2.2,2.3,2.4},
xtick pos=left,
ytick pos=left,
ylabel style={at={(-0.07,0.5)}},
xticklabels={n10p3q1,n20p3q1,n25p3q1,n25p8q1,n25p5q1,n10p5q3,n25p5q3,n10p5q1,n25p8q3,n20p5q1,n20p5q3,n25p8q5,n20p8q1,n20p8q3,n20p8q5,n10p8q3,n10p8q1,n10p8q5},
xticklabel style={rotate=90, font=\tiny},
yticklabel style={font=\tiny}]

\addplot[no marks,blue] coordinates {(0.1,106.5714) (0.2,246.5714) (0.3,354.5714) (0.4,886.1429) (0.5,1018.429) (0.6,1037.571) (0.7,2267) (0.8,2558.143) (0.9,3073.857) (1,3437.143) (1.1,3438.143) (1.2,4107.714) (1.3,6950.286) (1.4,15215.29) (1.5,26173.71) (1.6,43448.43) (1.7,94849.14) (1.8,121931.6)  };

\addplot[no marks, red] coordinates {(0.1,105.1429) (0.2,434.4286) (0.3,737.5714) (0.4,1122) (0.5,1359) (0.6,1543.286) (0.7,1614) (0.8,1970.571) (0.9,2434.429) (1,2450.286) (1.1,3060.857) (1.2,5598.714) (1.3,6378) (1.4,6552.714) (1.5,6952.286) (1.6,8450.714) (1.7,9798.714) (1.8,22573.14) };

\addplot[no marks,dashed, blue] coordinates {
(0.1,86.14286) (0.2,309.5714) (0.3,554.7143) (0.4,1348.714) (0.5,2143) (0.6,3146) (0.7,4062.857) (0.8,11320.14) (0.9,15721.29) (1,16682.57) (1.1,18709.14) (1.2,22311.86) (1.3,47079.29) (1.4,51401.86) (1.5,52147) (1.6,58509.14) (1.7,165883) (1.8,341520.7) 
};

\addplot[no marks, dashed,red] coordinates {(0.1,45) (0.2,164.4286) (0.3,224.2857) (0.4,298.8571) (0.5,919.8571) (0.6,4108.571) (0.7,4677.857) (0.8,4868.857) (0.9,5606) (1,7427.571) (1.1,10054.57) (1.2,11137.43) (1.3,13070.86) (1.4,13777) (1.5,18921.57) (1.6,28278.29) (1.7,48331.86) (1.8,49097.57)};

\addlegendentry{\tiny\eqref{thlpu1}}
\addlegendentry{\tiny\eqref{thlpu2}}
\addlegendentry{\tiny\eqref{thlpu1}+VI}
\addlegendentry{\tiny\eqref{thlpu2}+VI}
\end{axis}
\end{tikzpicture}~\begin{tikzpicture}[xscale=0.9, yscale=0.9]
\begin{axis}[ylabel=${\rm UnSolved}$,
legend pos=north west,
xtick={0.1,0.2,0.3,0.4,0.5,0.6,0.7,0.8,0.9,1,1.1,1.2,1.3,1.4,1.5,1.6,1.7,1.8,1.9,2,2.1,2.2,2.3,2.4},
ytick={0,0.2,0.4,0.6,0.8,1},
xtick pos=left,
ytick pos=left,
ylabel style={at={(-0.09,0.5)}},
yticklabels={$0\%$, $20\%$, $40\%$, $60\%$, $80\%$, $100\%$},
xticklabels={n10p3q1,n10p5q1,n10p5q3,n10p8q1,n10p8q3,n10p8q5,n20p3q1,n20p5q3,n25p3q1,n25p5q3,n20p5q1,n20p8q3,n20p8q1,n25p5q1,n25p8q1,n20p8q5,n25p8q3,n25p8q5},
xticklabel style={rotate=90, font=\tiny},
yticklabel style={font=\tiny}]

\addplot[no marks,blue] coordinates {(0.1,0) (0.2,0) (0.3,0) (0.4,0) (0.5,0) (0.6,0) (0.7,0) (0.8,0) (0.9,0) (1,0.1428571) (1.1,0.4285714) (1.2,0.4285714) (1.3,0.5714286) (1.4,0.5714286) (1.5,0.7142857) (1.6,0.8571429) (1.7,1) (1.8,1) 
};

\addplot[no marks, red] coordinates {(0.1,0) (0.2,0) (0.3,0) (0.4,0) (0.5,0) (0.6,0) (0.7,0) (0.8,0) (0.9,0.1428571) (1,0.2857143) (1.1,0.4285714) (1.2,0.5714286) (1.3,0.7142857) (1.4,0.7142857) (1.5,0.8571429) (1.6,1) (1.7,1) (1.8,1)  };

\addplot[no marks,dashed, blue] coordinates {
(0.1,0) (0.2,0) (0.3,0) (0.4,0) (0.5,0) (0.6,0) (0.7,0) (0.8,0) (0.9,0) (1,0) (1.1,0) (1.2,0.2857143) (1.3,0.2857143) (1.4,0.2857143) (1.5,0.4285714) (1.6,0.4285714) (1.7,0.4285714) (1.8,0.5714286)};

\addplot[no marks, dashed,red] coordinates { (0.1,0) (0.2,0) (0.3,0) (0.4,0) (0.5,0) (0.6,0) (0.7,0) (0.8,0) (0.9,0) (1,0) (1.1,0) (1.2,0) (1.3,0.2857143) (1.4,0.2857143) (1.5,0.5714286) (1.6,0.5714286) (1.7,0.7142857) (1.8,0.7142857)};

\addlegendentry{\tiny\eqref{thlpu1}}
\addlegendentry{\tiny\eqref{thlpu2}}
\addlegendentry{\tiny\eqref{thlpu1}+VI}
\addlegendentry{\tiny\eqref{thlpu2}+VI}
\end{axis}
\end{tikzpicture}
\caption{Nodes and UnSolved Instances for the AP dataset.\label{APlines2}}
\end{center}
\end{figure}
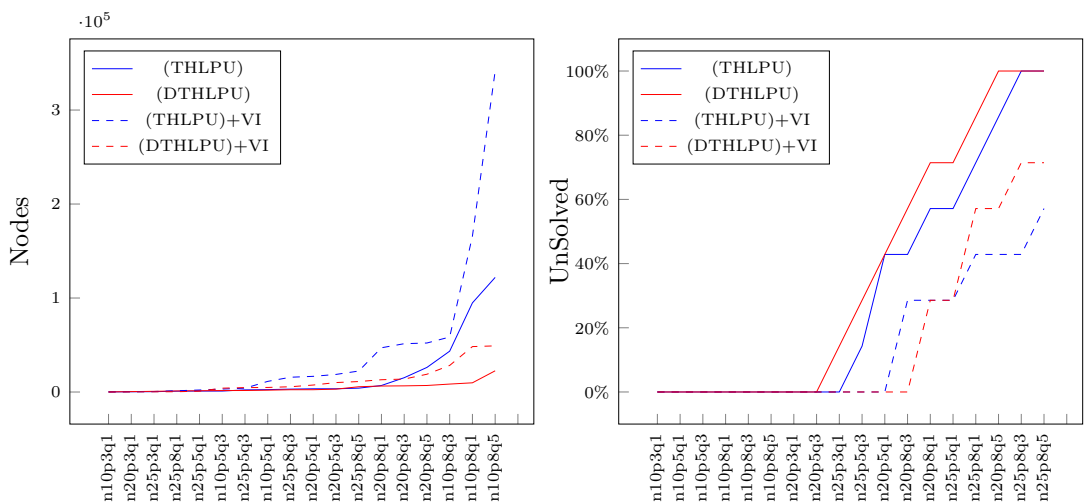

Concerning the number of valid inequalities added with the separation strategy, one can see in Figure \ref{cuts} that less number of cuts are, in general needed, with the disaggregated  formulation. Note that, apart from the upper bound for the number of cuts added in the procedure, we only stop adding cuts when the gap between two consecutive LP solutions is less than $1\%$.

\begin{figure}
\begin{center}

\begin{tikzpicture}[xscale=0.9, yscale=0.9]
\begin{axis}[ylabel=${\rm \#Cuts}$ CAB,
legend pos=south east,
xtick={0.1,0.2,0.3,0.4,0.5,0.6,0.7,0.8,0.9,1,1.1,1.2,1.3,1.4,1.5,1.6,1.7,1.8,1.9,2,2.1,2.2,2.3,2.4},
xtick pos=left,
ytick pos=left,
ylabel style={at={(-0.04,0.5)}},
xticklabels={n10p3q1,n20p3q1,n25p3q1,n10p5q1,n15p3q1,n15p5q1,n10p5q3,n15p5q3,n10p8q1,n10p8q3,n20p5q1,n25p5q1,n10p8q5,n15p8q1,n15p8q3,n15p8q5,n20p5q3,n20p8q1,n20p8q3,n20p8q5,n25p5q3,n25p8q1,n25p8q3,n25p8q5
},
xticklabel style={rotate=90, font=\tiny},
yticklabel style={font=\tiny}]

\addplot[no marks,dashed, blue] coordinates {
(0.1,44.14286) (0.2,45) (0.3,45.14286) (0.4,63.28571) (0.5,65.57143) (0.6,77.85714) (0.7,78.85714) (0.8,87.14286) (0.9,92.28571) (1,93.14286) (1.1,95.42857) (1.2,98.57143) (1.3,100) (1.4,100) (1.5,100) (1.6,100) (1.7,100) (1.8,100) (1.9,100) (2,100) (2.1,100) (2.2,100) (2.3,100) (2.4,100) 
};

\addplot[no marks, dashed,red] coordinates {
(0.1,32.85714) (0.2,48.71429) (0.3,52.42857) (0.4,53.71429) (0.5,58.28571) (0.6,59.42857) (0.7,65.14286) (0.8,76.71429) (0.9,77.28571) (1,78.57143) (1.1,86) (1.2,90.28571) (1.3,91.14286) (1.4,95.28571) (1.5,95.85714) (1.6,100) (1.7,100) (1.8,100) };

\addlegendentry{\tiny\eqref{thlpu1}+VI}
\addlegendentry{\tiny\eqref{thlpu2}+VI}
\end{axis}
\end{tikzpicture}~\begin{tikzpicture}[xscale=0.9, yscale=0.9]
\begin{axis}[ylabel=${\rm \#Cuts}$ AP,
legend pos=south east,
xtick={0.1,0.2,0.3,0.4,0.5,0.6,0.7,0.8,0.9,1,1.1,1.2,1.3,1.4,1.5,1.6,1.7,1.8},
xtick pos=left,
ytick pos=left,
ylabel style={at={(-0.04,0.5)}},
xticklabels={n10p3q1,n10p5q1,n20p3q1,n25p3q1,n10p8q1,n10p5q3,n25p5q1,n10p8q3,n10p8q5,n20p5q1,n20p5q3,n20p8q1,n20p8q3,n20p8q5,n25p5q3,n25p8q1,n25p8q3,n25p8q5
},
xticklabel style={rotate=90, font=\tiny},
yticklabel style={font=\tiny}]

\addplot[no marks,dashed, blue] coordinates {
(0.1,72.42857) (0.2,82.57143) (0.3,83.71429) (0.4,85) (0.5,90.71429) (0.6,94.42857) (0.7,98) (0.8,100) (0.9,100) (1,100) (1.1,100) (1.2,100) (1.3,100) (1.4,100) (1.5,100) (1.6,100) (1.7,100) (1.8,100) 
};

\addplot[no marks, dashed,red] coordinates {
(0.1,64.14286) (0.2,75) (0.3,75.57143) (0.4,84.28571) (0.5,88.85714) (0.6,92.57143) (0.7,94.42857) (0.8,96.71429) (0.9,98.71429) (1,100) (1.1,100) (1.2,100) (1.3,100) (1.4,100) (1.5,100) (1.6,100) (1.7,100) (1.8,100) };

\addlegendentry{\tiny\eqref{thlpu1}+VI}
\addlegendentry{\tiny\eqref{thlpu2}+VI}
\end{axis}
\end{tikzpicture}
\caption{Number of Cuts for the CAB (left) and the AP (right) datasets.\label{cuts}}
\end{center}
\end{figure}
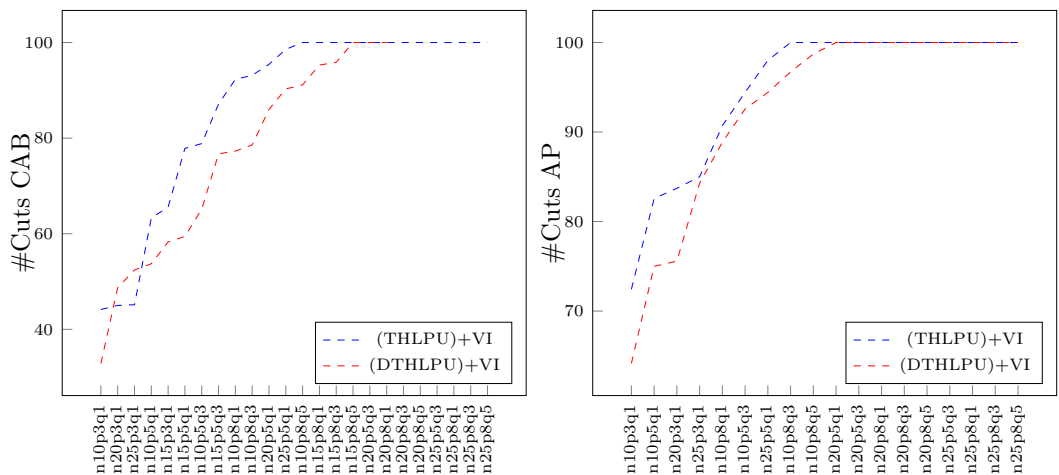

\section{Conclusions}\label{sec:7}

The Tree-of-Hubs Location Problem with Upgrading has not been, to the best of our knowledge,
previously studied. In this paper we introduce and analyze it, developing and tightening 
two Mixed Integer Programming formulations.
Several families of valid inequalities, some of them of reduced size and some others of exponential size, 
have been derived. The former can be fully incorporated to the formulation, and for the latter 
we devise separation procedures that allows one  to identify the most violated inequalities in the family.
We have computationally checked the improvements produced by the addition of this inequalities and present 
the first results for medium sized instances. 

Several different exact and heuristic procedures have been studied in the literature for the 
Tree-of-Hubs Location Problem, a simplified version of the problem we consider here. As a matter 
of future research, these approaches could also be tested on the THLPU. Also several extensions 
in different directions have been carried out that could be extended themselves by considering 
upgrading of nodes. In general, we consider single or multiple hub location problems with upgrading 
an interesting line of further study for authors in the hub location field. Also, different upgrading degrees may be consider for the nodes, each of them implying different reductions to the edge costs. The analysis of similar extended formulation for this problem would be the topic of future research.

\section*{Acknowledgments}
The first author was partially by the research projects MTM2016-74983-C2-1-R (MINECO, Spain) and PP2016-PIP06 (Universidad
de Granada) and the research group SEJ-534 (Junta de Andaluc\'ia). The second author  was partially 
supported by the research projects  MTM2015-65915-R (MINECO, Spain),  19320/PI/14 ({\sl Fundaci\'on S\'eneca}) and the project ``Cost-sensitive classification. A mathematical 
optimization approach'' ({\sl Fundaci\'on BBVA}).


\end{document}